\documentclass[12pt]{amsart}

\usepackage{amsmath,amsthm,amscd,amsfonts,amssymb,epic,eepic,bbm,tikz-cd}
\usepackage[pagebackref,colorlinks=true,linkcolor=blue,citecolor=blue]{hyperref}
\usepackage[capitalise]{cleveref}
\usepackage{mathtools}

\allowdisplaybreaks
\setlength{\topmargin}{0truein} \setlength{\headheight}{.25truein}
\setlength{\headsep}{.25truein} \setlength{\textheight}{9truein}
\setlength{\footskip}{.25truein} \setlength{\oddsidemargin}{0truein}
\setlength{\evensidemargin}{0truein}
\setlength{\textwidth}{6.5truein} \setlength{\voffset}{-0.5truein}
\setlength{\hoffset}{0truein}

\newtheorem{theorem}{Theorem}[section]
\newtheorem{lemma}[theorem]{Lemma}

\theoremstyle{definition}

\newtheorem*{remark}{Remark}
\numberwithin{equation}{section}

\newcounter{remarkscounter}
\newenvironment{remarks}
{\medskip\noindent{\it
Remarks.}\begin{list}{{\rm(\arabic{remarkscounter})}
}{\usecounter{remarkscounter}

\setlength{\labelsep}{\fill} \setlength{\leftmargin}{0pt}
\setlength{\itemindent}{\fill}
\setlength{\labelwidth}{\fill}\setlength{\topsep}{0pt}
\setlength{\listparindent}{0pt}}} {\end{list}}

\numberwithin{equation}{section}
\newcommand{\A}{\mathbb{A}}
\newcommand{\C}{{\mathbb C}}

\newcommand{\GL}{\mathrm{GL}}

\newcommand{\SL}{\mathrm{SL}}
\newcommand{\Sp}{\mathrm{Sp}}
\renewcommand{\S}{{\mathcal S}}

\newcommand{\Z}{{\mathbb Z}}

\newcommand{\SO}{\mathrm{SO}}
\newcommand{\lto}{\longrightarrow}

\newcommand{\OO}{\mathcal{O}}
\newcommand{\CC}{\mathbb{C}}
\newcommand{\RR}{\mathbb{R}}
\newcommand{\GG}{\mathbb{G}}

\newcommand{\quash}[1]{}

\theoremstyle{definition}

\newenvironment{psmatrix}
  {\left(\begin{smallmatrix}}
  {\end{smallmatrix}\right)}

\renewcommand{\bar}{\overline}

\newcommand{\JS}[1]{}

\renewcommand{\hat}{\widehat}

\newcommand{\one}{\mathbbm{1}}


\linespread{1.2}

\allowdisplaybreaks

\usepackage{microtype}

\makeatletter
\@namedef{subjclassname@2020}{\textup{2020} Mathematics Subject Classification}
\makeatother

\begin{document}

\title{Automorphic-twisted summation formulae for pairs of quadratic spaces}
\author{Miao (Pam) Gu}
\address{Department of Mathematics\\
University of Michigan\\
Ann Arbor, MI 48109-1043}
\email{pmgu@umich.edu}
\subjclass[2020]{Primary: 11F70, Seconday: 11F66.}
\keywords{Poisson summation conjecture, Eisenstein series, theta series}

\begin{abstract}
Motivated by the conjectures of Braverman-Kazhdan, Lafforgue, Ng\^{o}, and Sakellaridis, we prove a summation formula for certain spaces of test functions on the zero locus of a quadratic form. The functions are built from the Whittaker coefficients of automorphic representations on $\GL_n$. We also give an expression of the local factors where all the data is unramified. 
\end{abstract}

\maketitle

\tableofcontents

\section{Introduction} \label{1}

In this paper, we prove a summation formula for a family of test functions on the zero locus of a quadratic form.  The test functions are built out of Whittaker coefficients of an automorphic representation of $\GL_n(\A_F)$.  We first put our work in context and then state our results precisely.

\subsection{Generalized Poisson summation formulae for spherical varieties}

Conjectures of Braverman-Kazhdan \cite{BK}, Lafforgue \cite{Laf}, Ng\^{o} \cite{Ngo, Ngo:Hankel}, and Sakellaridis \cite{Sak} suggest that every affine spherical variety admits a generalized Poisson summation formula. We refer to this conjecture as the Poisson summation conjecture. The Poisson summation conjecture implies the functional equation and meromorphic continuation for fairly general Langlands $L$-functions which, by the converse theorem, implies Langlands functoriality in great generality. 

In \cite{GL}, such a summation formula is proved where the underlying scheme is built out of a triple of quadratic spaces.  This setting is of particular interest because it is the first case in which the Poisson summation conjecture is known where the underlying affine spherical variety is not a torus bundle over a flag variety.  
 Their method of proof involves replacing the cuspidal representation of $\SL_2^3(\A_F)$ appearing in  Garrett's integral representation of the Rankin triple product $L$-function \cite{Gar2, PSR} with a $\theta$-function.

This suggests that new summation formulae can be obtained by replacing cusp forms on symplectic groups appearing in known integral representations with restrictions of $\theta$-functions on metaplectic groups.  
In this paper, we take another step towards this general program. In more detail, we use the exceptional isogeny $\mathrm{SL}_2 \times \mathrm{SL}_2 \to \mathrm{SO}_4$ to substitute two $\theta$-functions into the Rankin-Selberg integral for $\mathrm{SO}_{2\ell} \times \GL_n$ constructed in \cite{Kap} in the special case $\ell=2$. In the next subsection, we state our formula precisely and then give a representation-theoretic interpretation. 
 
\begin{remark}
More generally, one may consider substituting restrictions of minimal representations (in the sense of \cite{GanSavin}) into integral representations of $L$-functions. 
\end{remark}
 
The Rankin-Selberg integral in \cite{Kap} represents a Langlands-Shahidi $L$-function, and it is illuminating to consider our procedure from the point of view of the Langlands-Shahidi method.  It is well-known that Langlands-Shahidi $L$-functions can be roughly enumerated by root systems together with a simple root.  The Dynkin diagram that remains after deleting the simple root is the Dynkin diagram of a Levi subgroup. Our construction corresponds to the Dynkin diagram $D_{n+2}$ with the unique simple root such that the complement of the root is the Dynkin diagram for $\SL_2 \times \SL_2 \times \SL_n$. 

The summation formula we prove in this paper enlarges the collection of cases in which we know the Poisson summation conjecture. At present the set of cases is very small (\cite{BK:normalized, GL, GH}). This provides crucial test cases to examine for insight into the general picture. Moreover, since the ultimate goal is to study higher-rank automorphic $L$-functions that are currently not understood, incorporating a cusp form of arbitrarily high rank from the outset is a step in the right direction. We also point out that there are methods of building up new summation formulae from old ones modeled on the manner that the summation formula for the basic affine space of \cite{BK:basic:affine} is built up out of a family of Poisson summation formulae for a two-dimensional symplectic vector space. Thus it is of interest to have various families of summation formulae to serve as building blocks.

\subsection{Main Theorem}

Let us make the summation formula precise.
Let $F$ be a number field and $\A_F$ be its Adele ring. Let $d_1,d_2$ be two even positive integers, and $V_1=\mathbb{G}_{a}^{d_1},V_2=\mathbb{G}_{a}^{d_2}$ be a pair of affine spaces over $F$ equipped with non-degenerate quadratic forms $\mathcal{Q}$ and $\mathcal{Q'}$ respectively.  Let $V:=V_1 \oplus V_2$.

Let $Y\subset V$ be the closed subscheme whose points in an $F$-algebra $R$ are 
\begin{align}\label{Y}
Y(R):= \{ y=(y_1,y_2)\in V(R): \mathcal{Q}(y_1)=2\mathcal{Q'}(y_2) \}.
\end{align}
Below we will use $R$ to denote a ``test'' $F$-algebra, sometimes without further comment.

Let 
\begin{align*}
V':= \{ \gamma=(\gamma_1,\gamma_2)\in V(R): \gamma_i\neq 0 \}.
\end{align*}
We let $\mathbb{P}Y' \subset \mathbb{P}V$ be the corresponding quasi-projective scheme.  This is the scheme attached to the pair of quadratic spaces mentioned above. 

Our summation formula will involve functions on $Y$ twisted by Whittaker functions attached to a higher-rank cuspidal automorphic representation.  Let $n$ be a positive integer. Let $\tau$ be a cuspidal automorphic representation of $\GL_n(\A_F)$.
Let $H$ be the split orthogonal group $\mathrm{SO}_{2n+1}$. Let
$$
G(R):=\{g=(g_1,g_2) \in \GL_2^2(R): \det g_1=\det g_2^{-1}\},
$$
and let $\xi_s$ be a smooth holomorphic section from the space
\[
\mathrm{Ind}_{Q_n(\A)}^{H(\A)} (\tau \otimes |\det|^{s-\frac{1}{2}}).
\]
Here $Q_n \leq H$ is a parabolic subgroup with Levi $\GL_n$. In \cref{3}, following \cite{Sou,Kap}, we construct a family of inductions of Whittaker functions (lying in $\mathrm{Ind}_{Q_n}^{H}(\mathcal{W}_{\tau,s})$, where $\mathcal{W}_{\tau,s}=\mathcal{W}_{\tau}\otimes |\mathrm{det}|^{s-\frac{1}{2}}$ and $\mathcal{W}_\tau$ is the Whittaker model for $\tau$)
\begin{align*}
H(\A_F) \times \CC &\lto \CC\\
(h,s) &\longmapsto W_{\xi_s}(h,1).
\end{align*}
Here $W$ is an indication that this is a Whittaker function on $\GL_n(\A_F)$ for $\tau$ when restricted to an appropriate Levi subgroup of $H$. 

\begin{remark}
The spaces $V, V_1, V_2$ have no relationship with the groups $G, H$, and the split $2n+1$-dimensional space $H$ is acting on.  We merely require that $n\geq \ell=2$. 
\end{remark}

We extend the Weil representation of $\SL_2^2(\A_F)$ on $\mathcal{S}(V(\A_F))=\mathcal{S}(V_1(\A_F) \times V_2(\A_F))$ to a representation $\rho$ of $G(\A_F)$ on $\mathcal{S}(V_1(\A_F) \times V_2(\A_F) \times \A_F^\times)$ in \cref{3} via a standard procedure.  

We then define for $y \in Y'(\A_F)$ the global integral 
\begin{align} \label{Ifxis}
I(f,W_{\xi_s})(y)=\int_{U_{2}(\mathbb{A}_F)\backslash G(\mathbb{A}_F)} \rho(g)f(y,1)\int_{N^{\circ}(\mathbb{A}_F)} W_{\xi_s}(w u\iota(g),a_y)\psi_{1}(u)dudg.
\end{align}
Here\begin{align} \label{U_2}
U_{2} \subset G
\end{align}
is a maximal unipotent subgroup, $N^{\circ}$ is a certain unipotent subgroup of $H$ (see (\ref{Ncirc})), and $\iota$ is a embedding map from $G$ to $H$ (see (\ref{iota})). Also, 
\begin{align} \label{a_y}
a_y = \begin{psmatrix}
-4Q'(y_2) & \\ & I_{n-1}
\end{psmatrix}\in \GL_n(\A_F)
\end{align} 
encodes the value of the quadratic form.  We point out that the integral $I(f,W_{\xi})(y)$ is only well-defined for $y \in Y(\A_F),$ not for $y \in V(\A_F)$, due to the invariance properties of the Weil representation.  

Thus we have a map 
\begin{align} \label{forimage}
\mathcal{S}(V(\A_F)\times \A^\times) \otimes \mathrm{Ind}_{Q_n(\A_F)}^{H(\A_F)}(\mathcal{W}_{\tau,s}) \lto C^\infty(Y'(\A_F)).
\end{align}
We define $\mathcal{S}(Y(\A_F),\tau,s)$ to be the image of \eqref{forimage}.  We view this as a Schwartz space of functions on $Y'(\A_F)$ twisted by $\tau \otimes |\det|^{s-\frac{1}{2}}.$  
One has a restricted direct product decomposition
\begin{align}
\mathcal{S}(V(\A_F)) \otimes \mathrm{Ind}_{Q_n(\A_F)}^{H(\A_F)}(\mathcal{W}_{\tau,s})=\otimes_v' \mathcal{S}(V(F_v)) \otimes \mathrm{Ind}_{Q_n(F_v)}^{H(F_v)}(\mathcal{W}_{\tau_v,s}).
\end{align}
Here the restricted direct product is with respect to the basic vectors $\one_{V(\OO)\times \OO^{\times}} \otimes W^{\circ}_{\rho_{\tau,s}}$, where $\OO$ denotes the ring of integers of $F$, and $W^{\circ}_{\rho_{\tau,s}}$ is the unique normalized spherical vector in $\mathrm{Ind}_{Q_n(F_v)}^{H(F_v)}(\mathcal{W}_{\tau_v,s})$ at unramified places. Thus one has a restricted direct product decomposition
$$
\mathcal{S}(Y'(\A_F),\tau,s)=\otimes_v' \mathcal{S}(Y'(F_v),\tau_v,s)
$$
with respect to the vectors $I(\one_{V(\OO)\times \OO^{\times}},W^{\circ}_{\rho_{\tau,s}})$ at unramified places. 

We refer to $I(\one_{V(\OO)\times \OO^{\times}},W^{\circ}_{\rho_{\tau,s}})$ as the basic function.  We give an expression for it in \cref{theorem2} below.

\begin{remark}
The function $I(\one_{V(\OO)\times \OO^{\times}},W^{\circ}_{\rho_{\tau,s}})$ is not the basic function of a spherical variety in the usual sense.  We explain its expected relation to the usual basic function in \eqref{basic:rel} below.
\end{remark}

The space $\mathcal{S}(Y'(\A_F),\tau,s)$ comes equipped with a correspondence on the last horizontal line of the following diagram:
\begin{center}
\begin{tikzcd}
 \mathcal{S}(V(\A_F)) \otimes \mathrm{Ind}_{Q_n}^{H}(\mathcal{W}_{\tau,s}) \arrow[r, "{M(\tau,s)}"] \arrow[d, "I"] & \mathcal{S}(V(\A_F)) \otimes \mathrm{Ind}_{Q_n}^{H}(\mathcal{W}_{\tau^{\vee},1-s}) \arrow[d, "I"] \\
\mathcal{S}(Y'(\A_F),\tau,s) \arrow[r, dotted] & \mathcal{S}(Y'(\A_F),\tau^\vee,1-s)
\end{tikzcd}.
\end{center}
Here $M(\tau,s)$ is the usual intertwining operator from $\mathrm{Ind}_{Q_n}^{H}(\mathcal{W}_{\tau, s})$ to $\mathrm{Ind}_{Q_n}^{H}(\mathcal{W}_{\tau^{\vee},1-s})$.
We do not give a definition of the dotted line in this work; it is nontrivial to prove that it exists.  We expect one can prove it using an analogue of the arguments of  \cite{GH}.

Let
\begin{align}
   V''(R)=\{(\gamma_1,\gamma_2)\in V(R): \mathcal{Q}(\gamma_1)=\mathcal{Q'}(\gamma_2)=0\}.
\end{align}
Our summation formula follows:

\begin{theorem}\label{main thm}
For $g\in \mathrm{O}(V_1)(\A_F) \times \mathrm{O}(V_2)(\A_F)$, the sum $\sum_{y\in \mathbb{P}Y'(F)}I(f,W_{\xi_s})(gy)$ admits a meromorphic continuation to the whole $s$-plane. It satisfies a functional equation
\[
\sum_{y\in \mathbb{P}Y'(F)} I(f,W_{\xi_s})(gy) = \sum_{y\in \mathbb{P}Y'(F)} I(f,M(\tau,s)W_{\xi_s})(gy).
 \]
Here $f(y,1)\in \S(V(\A_F)\times \A_F^{\times})$ is a Schwartz-Bruhat function such that $\rho(g)f(\gamma,u)=0$ for all $(g,\gamma,u) \in G(F) \times  V''(F) \times F^{\times}.$ 
\end{theorem}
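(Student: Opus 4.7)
\medskip
\noindent \emph{Proof strategy.}  The plan is to realise the sum $\sum_{y\in \mathbb{P}Y'(F)} I(f,W_{\xi_s})(gy)$ as the unfolding of a global Rankin--Selberg integral pairing an Eisenstein series on $H=\SO_{2n+1}$ with a theta series on $G$, and then to inherit both assertions from the meromorphic continuation and functional equation of the Eisenstein series.

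First, I would form the Eisenstein series
\begin{align*}
E(h,\xi_s) = \sum_{\gamma\in Q_n(F)\backslash H(F)} \xi_s(\gamma h),
\end{align*}
whose Fourier coefficient along $N^\circ$ against $\psi_1$ unfolds, following \cite{Sou,Kap}, into an expression built from $W_{\xi_s}$.  Dually, I would form the theta series
\begin{align*}
\theta_f(g) = \sum_{\gamma\in V(F)} \rho(g)f(\gamma,1)
\end{align*}
on $G(F)\backslash G(\A_F)$; the hypothesis that $\rho(g)f$ vanishes on $V''(F)\times F^\times$ for all $g\in G(F)$ means this sum effectively runs over $V(F)\setminus V''(F)$, removing the degenerate orbit where both quadratic forms vanish and producing a well-defined, absolutely convergent series.

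Next, I would consider the global pairing
\begin{align*}
\mathcal{I}(f,\xi_s) = \int_{[G]} \theta_f(g)\, E^{\psi_1,N^\circ}(\iota(g),\xi_s)\, dg,
\end{align*}
where $E^{\psi_1,N^\circ}$ denotes the relevant Fourier coefficient.  Unfolding $E^{\psi_1,N^\circ}$ against $\theta_f$ decomposes the answer by $\iota(G(F))$-orbits on $V(F)\setminus V''(F)$: the Whittaker datum forces the quadratic relation $\mathcal{Q}(y_1)=2\mathcal{Q'}(y_2)$ (this is where $a_y$ appears, as the character $\psi_1$ reads off the quadratic-form values), so the contributing orbits are parametrised exactly by $\mathbb{P}Y'(F)$.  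The inner integral attached to the $y$-orbit is $I(f,W_{\xi_s})(y)$, and the twist by $g\in \mathrm{O}(V_1)(\A_F)\times\mathrm{O}(V_2)(\A_F)$ is implemented by translating $f$ within the theta series, converting the identity answer into the sum for $gy$.

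The meromorphic continuation and the functional equation $E(h,\xi_s)=E(h,M(\tau,s)\xi_s)$ of the Eisenstein series then transport through the unfolding to yield the theorem.  The main obstacle will be justifying the unfolding rigorously: identifying the orbits of $\iota(G(F))$ on $V(F)\setminus V''(F)$ with $\mathbb{P}Y'(F)$, showing that the non-contributing orbits vanish under the hypothesis on $f$, and exchanging summation with integration in a region of absolute convergence before continuing meromorphically in $s$.  A secondary technical point is ensuring compatibility of the metaplectic cover implicit in the Weil representation on $V=V_1\oplus V_2$ with the linear structure of $G$ after passing through the exceptional isogeny $\SL_2\times\SL_2\to\SO_4$; because $d_1$ and $d_2$ are even, the cover splits and no genuine metaplectic complication arises.
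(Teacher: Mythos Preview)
Your proposal is correct and follows essentially the same approach as the paper: form the global integral $\int_{[G]}\Theta_f(g)\,E_{\xi_s}^{\psi_1}(\iota(g))\,dg$, unfold the Eisenstein series (following \cite{Kap}) and then the theta series via the Weil representation to obtain $\sum_{y\in\mathbb{P}Y'(F)}I(f,W_{\xi_s})(y)$, and inherit the meromorphic continuation and functional equation from those of $E_{\xi_s}$.  Two small corrections of detail: the Fourier coefficient is taken along the larger unipotent $N_H$ rather than $N^\circ$, and the passage to $\mathbb{P}Y'(F)$ arises not from $\iota(G(F))$-orbits on $V(F)$ but from successively integrating out the subgroups $N_1,N_2,M_1$ of the Borel of $G$ against the Weil representation (with $N_1$ imposing $\mathcal{Q}(y_1)=2\mathcal{Q}'(y_2)$ and $M_1$ effecting the projectivisation).
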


\begin{remark}
If one directly integrates the sum $\sum_{y\in \mathbb{P}Y'(F)}I(f,W_{\xi_s})(gy)$ against a pair of cusp forms on $ \mathrm{O}(V_1)(\A_F) \times \mathrm{O}(V_2)(\A_F)$ the integral will vanish if the Witt index of one of the $V_i$ is sufficiently large by \cite{AGS}.  Thus one cannot directly apply Theorem \ref{main thm} to the study of $L$-functions on orthogonal groups whose underlying quadratic space has large Witt index.  
\end{remark}

Theorem \ref{main thm} has a different form than predicted by the papers mentioned at the beginning of the paper. It seems reasonable to expect that there exists a spherical variety $Z$ for $ \mathrm{O}(V_1) \times \mathrm{O}(V_2) \times \GL_n$ equipped with a Schwartz space $\mathcal{S}(Z(\A_F))$ with Fourier transform $\mathcal{F}$ and Poisson summation formula 
\[ 
\sum_{z \in Z(F)} f(z)= \sum_{z \in Z(F)} \mathcal{F}(f)(z),
\]
such that (roughly)
$$
\int_{[\GL_n]}\xi_s(1,g')\sum_{z \in Z(F)} f(z(g,g')) dg'=\sum_{y \in \mathbb{P}Y'(F)} I(f,W_{\xi_s})(gy)
$$
and
$$
\int_{[\GL_n]}\xi_s(1,g')\sum_{z \in Z(F)} \mathcal{F}(f)(z(g,g')) dg'=\sum_{y \in \mathbb{P}Y'(F)} I(f,M(\tau,s)W_{\xi_s})(gy)
$$
for $g\in \mathrm{O}(V_1)(\A_F) \times \mathrm{O}(V_2)(\A_F)$ and $g'\in \GL_n(\A_F)$.
After unfolding, at the unramified places, we expect that (again roughly)
\begin{align} \label{basic:rel}
\int_{\GL_n(F)} W_{\xi_s}(1,g') f_{Z}(z(g,g'))dg' = I(\one_{V(\OO)\times \OO^{\times}},W^{\circ}_{\rho_{\tau,s}})
\end{align}
Here $f_{Z} \in \mathcal{S}(Z(F))$ is the expected basic function in the sense of \cite{YW}.  The expectations above are rough because the functions $I(f,W_{\xi_s})$ may need to be renormalized.

The integral $I(f,W_{\xi_s})(y)$ mixes the arithmetic of the quadratic forms $\mathcal{Q}$ and $\mathcal{Q'}$ and the cuspidal automorphic representation $\tau$. It is Eulerian for each $y$ (see the discussion around \eqref{Eulerian}). Ideally, one would like an expression for the unramified local factors in terms of a suitable local model for $\tau$ and the point $y.$  We achieve this in \cref{theorem2} below. This is far more difficult than the corresponding calculation in \cite{GL}. To execute it, we adapt an argument appearing in \cite{Kap}, which ultimately relates the integral to the Bessel model of $\mathrm{Ind}_{Q_n}^{H}(\mathcal{W}_{\tau, s})$ attached to a character on a unipotent subgroup of $H$ and a character on $\SO_2$.  

\begin{theorem}\label{theorem2}
For all the data unramified, $\Re(s)$ large, and $d_2 > d_1$, we have
\begin{align*}
I(\one_{V(\OO)\times \OO^{\times}},W^{\circ}_{\rho_{\tau,s}})(y)  &= \alpha(y_1, y_2)|4Q'(y_2)|^{-\Re(s)+\frac{1}{2}-\frac{n}{2}} \sum_{k=\mathrm{val}(4\mathcal{Q'}(y_2))}^{\infty}  q^{(n-2+\frac{d_2}{2})k} C_{k,s}(y).
\end{align*}
Here $\alpha(y_1, y_2)$ is as defined in \cref{c_k}, and
\begin{align*}
C_{k,s}(y)&=\int_{iI_F +\sigma_1}\int_{iI_F+\sigma_2} \frac{(1-q^{s_2}+(q-1)q^{-\mathrm{val}(y_2)s_2})\zeta_v(-s_2-\frac{d_1}{2}+2)^2\zeta_v(-s_2)^2(\log q)^2}{4\pi^2\gamma(s-s_1+s_2, \chi'\otimes \tau)} \\
& \times 
q^{(-s_1+s_2)k}B_{W^{\circ}_{\rho_{\tau,s}}}\begin{psmatrix}
-4\mathcal{Q}'(y_2)\varpi^k&& \\ &I_{2n-1}& \\ &&(-4\mathcal{Q}'(y_2)\varpi^k)^{-1}
\end{psmatrix}ds_1 ds_2,
\end{align*}
which is the product of the $-k$-th coefficient in $q^{s_1}$ and the $k$-th coefficient in  $q^{s_2}$ of a product of Laurent series in $q^{s_1}$ and $q^{s_2}$, where $\gamma$ represents gamma factor for $\GL_1 \times \GL_n$, and $B_{W^{\circ}_{\rho_{\tau,s}}}$ is the normalized unramified Bessel function defined in \ref{Bessel}.
\end{theorem}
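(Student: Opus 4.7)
The plan is to perform a direct unramified computation of $I(\one_{V(\OO)\times\OO^{\times}}, W_{\rho_{\tau,s}})(y)$ by combining Iwasawa decomposition on $G(F_v)$ with an adaptation of the Bessel-model argument of \cite{Kap}, suitably modified to accommodate the fact that we have replaced a cuspidal input on $\SO_4$ by a Weil-representation twist.

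The first move is to exploit the Iwasawa decomposition $G(F_v) = B_G(F_v) K_G$: since both the Schwartz function $\one_{V(\OO)\times\OO^{\times}}$ and the section $W_{\rho_{\tau,s}}$ are spherical, and since the $\psi_1$-integral over the unipotent radical $U_2(F_v)\backslash U_{B_G}(F_v)$ is a standard Jacquet-type integral, the full outer integral over $U_2(F_v)\backslash G(F_v)$ collapses to a discrete sum over the cocharacter lattice of the maximal torus $T \subset G$. I would parametrize dominant torus elements by valuation data, and one such integer parameter will become the summation index $k$. On a torus element, $\rho(g)\one_{V(\OO)\times\OO^{\times}}(y,1)$ can be evaluated explicitly by the standard formulas for the Weil representation on characteristic functions of lattices; the constraint $\det g_1 = \det g_2^{-1}$ couples the two factors, and the $\OO^\times$-support condition forces a comparison between the valuation of $4\mathcal{Q}'(y_2)$ and $k$. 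Together, these produce the prefactor $\alpha(y_1,y_2)\,|4\mathcal{Q}'(y_2)|^{-\Re(s)+\tfrac{1}{2}-\tfrac{n}{2}}$, the growth factor $q^{(n-2+d_2/2)k}$, and the summation range $k \geq \mathrm{val}(4\mathcal{Q}'(y_2))$.

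What remains is the inner integral $\int_{N^\circ(F_v)} W_{\xi_s}(w u \iota(g), a_y) \psi_1(u)\,du$ evaluated at torus points $g$, and I would follow \cite{Kap} to rewrite it as an evaluation of the normalized unramified Bessel function $B_{\psi'_1, s_1}$ of $\mathrm{Ind}_{Q_n}^H(\mathcal{W}_{\tau,s})$ at the diagonal point $\mathrm{diag}(-4\mathcal{Q}'(y_2)\varpi^k,\, I_{2n-1},\, (-4\mathcal{Q}'(y_2)\varpi^k)^{-1})$ appearing in the statement. The bridge from the Whittaker-type integral to the Bessel function is a local functional equation of Shahidi type for $\GL_1 \times \GL_n$, and it is precisely this functional equation that produces the gamma factor $\gamma(s-s_1+s_2, \chi'\otimes\tau)$ in the denominator and the quadratic combination $\zeta_v(-s_2-\tfrac{d_1}{2}+2)^2 \zeta_v(-s_2)^2$ in the numerator. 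The double contour integral over $\Re(s_i)=\sigma_i$ is then simply Mellin inversion on two torus coordinates, extracting the Laurent coefficient of $q^{-s_1 k}q^{s_2 k}$ of the resulting meromorphic function.

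The main obstacle, by a considerable margin, is carrying out the Bessel-model identification in Step 3 with the Weil-representation twist intact. In \cite{Kap} the corresponding factor is a genuine cusp form on $\SO_4$; here we have pulled it back via the isogeny $\SL_2 \times \SL_2 \to \SO_4$ to the product of two $\SL_2$ theta kernels, so one must carefully track how the Weil-representation character on the split quadratic spaces $V_1, V_2$ interacts with both the unipotent character defining the Bessel model on $H$ and the character on the $\SO_2$ stabilizer. The hypothesis $d_2 > d_1$ is needed here to control convergence of the Mellin transforms, and the precise shape of the numerator in $C_{k,s}(y)$ reflects the explicit local quadratic Gauss-sum computation on $V_1$ and $V_2$ at this step.
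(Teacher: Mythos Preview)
Your high-level outline is broadly correct, but it elides the one genuinely new technical step in the argument, and without that step the proposal does not close.

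The paper's proof does not proceed by directly evaluating $\rho(g)\one_{V(\OO)\times\OO^\times}(y,1)$ on torus elements and then feeding the result into Kaplan's Bessel computation. The reason is that Kaplan's argument (his Claim~4.1 and Lemma~4.1) takes as input a vector $\varphi_\zeta$ in an unramified principal series $\mathrm{Ind}_{\overline{B}_{G'}}^{G'}(\chi\otimes\mu)$, and the Weil-representation function $g\mapsto\rho(g)f(y,1)$ is not of this form. The paper's key move (its Lemma~4.1) is to construct, by hand, an explicit function $\Phi_y\in C^\infty(\overline{U_2}(F)\backslash G(F)/K_G)$ and to verify the identity
\[
\rho(g)f(y,1)=\int_{U_2(F)}\Phi_y(ng)\,\psi_{U_2,\mathcal Q}(n)^{-1}\,dn,
\]
i.e.\ to exhibit $\rho(g)f(y,1)$ as the Whittaker transform of something that \emph{does} sit in a principal series. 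Only after this is the collapsing of integrals and the invocation of Kaplan's functional equation legitimate. Your proposal jumps straight from ``evaluate $\rho(g)f$ on torus elements'' to ``follow Kaplan'', which is exactly the gap the paper has to work to fill; your closing paragraph correctly flags this as the main obstacle but does not supply the idea.

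A secondary point: the factors $\zeta_v(-s_2)^2$ and $\zeta_v(-s_2-\tfrac{d_1}{2}+2)^2$ do not arise from the $\GL_1\times\GL_n$ functional equation. They are the Mellin transforms of the explicit functions $H_{2,y}$ and $H'_y$ built into $\Phi_y$; the functional equation contributes only the $\gamma$-factor in the denominator. Likewise, the Mellin inversion is applied to $\Phi_y$ (to linearize its dependence on the torus), not as a post-hoc coefficient extraction after the Bessel identification. Getting this order right is what makes the convergence analysis in \S6 tractable and is where the hypothesis $d_2>d_1$ actually enters.
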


\begin{remarks} 
    \item Let $b_Z$ be the basic function at unramified places in the conjectural Schwartz space $\mathcal{S}(Z(\A_F))$. Then the quantity computed in \cref{theorem2} should correspond to
    \[
    \int_{\GL_n(F)} b_Z(z,g')\xi_s(1,g')dg.
    \]
    \item As mentioned before, the key difference between the results in this paper with the work of Kaplan and Soudry is that we substitute the theta functions on $G(\A)$ for cusp forms in their work. In the unramified setting, we must relate $\rho(g)f(y,1)$ to functions lying in an appropriate Whittaker model in order to apply Kaplan and Soudry's methods. \\
\end{remarks}
    
Let us indicate how our constructions are related to the $\theta$-correspondence and Rankin-Selberg $L$-functions.  
Let $\pi_i$ be a cuspidal automorphic representation of $O(V_i)(F) \backslash O(V_i)(\A_F)$ for $i=1,2.$  Assume that $\pi_i$ is the $\theta$-lift of a cuspidal automorphic representation $\sigma_i$ of $\SL_2(\A_F).$  Assume moreover that the central character of $\sigma_1 \otimes \sigma_2$ is trivial when restricted to the diagonal copy of $\pm I_2.$  Then using the isomorphism
$$
\pm (I_2, I_2) \backslash \SL_2 \times \SL_2 \lto \mathrm{SO}_4
$$
the representation $\sigma_1 \otimes \sigma_2$ defines a cuspidal automorphic representation $\sigma$ of $\mathrm{SO}_4.$
Let 
$$
r:{}^{L}(\mathrm{SO}_4 \times \mathrm{GL}_n) \lto \GG_a^{4n}
$$
be the tensor product of the two standard representations.  
Finally, $\phi_i$ is a cusp form in the space of $\pi_i.$

    Then it follows from \cite{Kap} that the integral 
    \begin{align*}
    &\int_{O(V)(F) \backslash O(V)(\A_F)}\int_{U_2(\A_F) \backslash G(\A_F)}  \sum_{y\in \mathbb{P}Y'(F)} \rho(g,h) f(h^{-1}y,1)\phi_1(h_1)\phi_2(h_2)\\
    & \times \int_{N^{\circ}(\mathbb{A}_F)} W_{\xi_s}(w u\iota(g),a_y)\psi_{1}(u)dudgdh
    \end{align*}
   is Eulerian, with unramified local factors equal to 
    $$
    \frac{L(s,\sigma \times \tau,r)}{L(2s,\pi,\mathrm{Sym}^2)}.
    $$

\subsection{Representation theoretic interpretation} \label{sec:rep}

We provide an interpretation of our summation formula from a representation-theoretic perspective.

In (\ref{Ncirc}) and (\ref{psibeta}) we define a unipotent subgroup $N^{\circ}  \subset \SO_{2n+1}$ and a character $\psi_1: N^{\circ}(F) \backslash N^{\circ}(\A_F) \to \CC$. In particular, $N^{\circ}(\A_F)$ acts on 
$$
\mathrm{Ind}_{Q_n(\A_F)}^{H(\A_F)}(\mathcal{W}_{\tau,s})
$$
via $\psi_1$ (where $\mathcal{W}$ denotes the Whittaker model) and we can consider the coinvariants 
$$
\mathrm{Ind}_{Q_n(\A_F)}^{H(\A_F)}(\mathcal{W}_{\tau,s})_{N^{\circ}(\A_F),\psi_{1}}.
$$
There is an embedding $\iota:G \to \mathrm{SO}_{2n+1}$.  The image normalizes $N^\circ$ and stabilizes $\psi_1$, and we can consider the coinvariants
$$
(\mathrm{Ind}_{Q_n(\A_F)}^{H(\A_F)}(\mathcal{W}_{\tau,s})_{N^{\circ}(\A_F),\psi_{1}} \otimes \mathcal{S}(V(\A_F)))_{G(\A_F)}.
$$
The integral $I(f,\xi_s,s)$ may be viewed as a functional
\begin{align}
(\mathrm{Ind}_{Q_n(\A_F)}^{H(\A_F)}(\mathcal{W}_{\tau,s})_{N^{\circ}(\A_F),\psi_{1}} \otimes \mathcal{S}(V(\A_F)))_{G(\A_F)} \lto C^\infty(Y'(\A_F)).
\end{align}

The functional equation in the main theorem (\cref{main thm}) ultimately is a consequence of the existence of the intertwining operator on $\mathrm{Ind}_{Q_n}^{H}(\mathcal{W}_{\tau,s})$ associated with the longest Weyl element and the functional equation of the corresponding Eisenstein series defined in \cref{Eis} with respect to the intertwining operator.

\subsection{Outline of the paper}
We set up the notation for the various algebraic groups in \cref{2}. In \cref{3}, we establish our summation formula assuming various quantities converge. The main theorem is made rigorous by showing the absolute convergence of the sum of the global integrals in \cref{5}, \cref{7}, and \cref{8}. We give a list of symbols at the end of the paper. 

In \cref{4}, we give the computation of the local integral when all the data are unramified. We justify in \cref{6} the absolute convergence of various integrals and the final result in \cref{4}.

\section*{Acknowledgements}
I would like to thank my advisor Jayce Getz for suggesting this problem and for providing relentless support and valuable advice. I am also grateful to Eyal Kaplan for answering several questions related to his thesis and pointing out several typos in an earlier version of this paper, and to Spencer Leslie for helpful discussions and comments. The interpretation of \cref{sec:rep} is my understanding of comments of Yiannis Sakellaridis, which are greatly appreciated. I also want to thank Orsola Capovilla-Searle, Huajie Li, Stephen Mckean, Aaron Pollack, and Jiandi Zou for helpful comments. This work was partially supported by Jayce Getz's NSF grant DMS-1901883. Finally, I want to thank the anonymous referee for a thorough reading and helpful comments.

\section{Preliminaries} \label{2}

\subsection{Groups}\label{2.1}
For this section we let $F$ be a field of characteristic zero. Let $\OO$ be the ring of integers of $F$ and $\varpi$ be the uniformizer of $\OO$. To define points of $F$-schemes we let $R$ denote an $F$-algebra. All algebraic groups we define below are affine algebraic groups over $F$.

Let 
\[
J_{k}=\begin{pmatrix}
0 & & 1 \\ &\reflectbox{$\ddots$}& \\ 1&&0
\end{pmatrix} \in \GL_k(F)
\]
for $k$ a positive integer. Let $\SO_k$ be the special orthogonal group with respect to $J_k$.

We say that a parabolic subgroup of $\SO_{k}$ is standard if it contains the Borel subgroup of upper triangular matrices.  
Let
\begin{align}\label{G,H}
G(R):=\{(g_1, g_2) \in \GL_{2}^2(R): \mathrm{det}g_1=\mathrm{det}g_2^{-1}\} \quad \textrm{ and } \quad H:=\SO_{2n+1}.
\end{align}

Let
\begin{align} \label{G'}
G':=\SO_4.
\end{align}

We denote by $T_{G'}$ and $T_H$ the corresponding maximal split tori consisting of diagonal matrices. 

\subsubsection{Subgroups of $H$}\label{2.1.2}

Let $Q_{H}$ be the standard parabolic subgroup with Levi subgroup whose points in an $F$-algebra $R$ are
\[
M_H(R):=\left\{(x,c)^{\wedge}=\begin{psmatrix}  x & & \\ & c & \\ & & x^*\end{psmatrix} \in H: (x,c) \in \GL_{n-2}(R) \times \SO_{5}(R), x^{*}=J_{n-2}(^tx^{-1})J_{n-2}\right\}. 
\]

Let $N_H$ be the unipotent subgroup whose points in an $F$-algebra $R$ are
\[
N_{H}(R):=\left\{\begin{pmatrix} z & x & y \\ & I_5 & x' \\ & & z^{*}\end{pmatrix}: x\in M_{(n-2)\times 5}(R), y\in M_{n-2}(R), z\in Z_H(R) \right\}.
\]
Here $z^{*}=J_{n-2}(^tz^{-1})J_{n-2}$, $x'= -J_{5}(\,^tx)J_{n-2}z^{\ast}$, and $Z_{H}$ is the unipotent radical of the Borel subgroup of upper triangular matrices of $\GL_{n-2}$.

We let $Y_{H}$ be the subgroup of $N_{H}$ whose points in an $F$-algebra $R$ are 
\[
Y_{H}(R)= \left\{\begin{pmatrix} z&0&0&x&0 \\ &I_2&0&0&x' \\ &&1&0&0 \\ &&&I_2&0 \\ &&&&z^{*}\end{pmatrix}: z \in Z_{H}(R), z^{*}=J_{n-2}(^tz^{-1})J_{n-2}, x'= -J_{2}(\,^tx)J_{n-2}z^{\ast}\right\}, 
\]
and we denote $N^{\circ}$ the subgroup of $N_{H}$ whose points in an $F$-algebra $R$ are 
\begin{align} \label{Ncirc}
N^{\circ}(R)= \left\{\begin{pmatrix} I_{n-2}&x&y&0&z \\ &I_2&0&0&0 \\ &&1&0&y' \\ &&&I_2&x' \\ &&&&I_{n-2}\end{pmatrix}: x'=-J_2(^tx)J_{n-2}, y'=-J_1(^ty)J_{n-2} \right\}
\end{align}
such that $N^{\circ}$ is isomorphic to $Y_H \backslash N_H$.

For $x \in \GL_n(R)$
 let
 \begin{align} \label{vx}
 v(x):= \begin{psmatrix}  x & & \\ & 1 & \\ & & J_{n}(^tx^{-1})J_{n}\end{psmatrix} \in \GL_{2n+1}(R).
 \end{align}

Let $Q_n$ be the standard parabolic subgroup with Levi subgroup $M_n$ whose points in an $F$-algebra $R$ are 
\begin{align} \label{M_n}
M_n(R):=\left\{v(x): x\in \GL_{n}(R) \right\}.
\end{align}

Let $N_n$ be the unipotent subgroup whose points in an $F$-algebra $R$ are 
\begin{align} \label{N_n}
N_{n}(R):=\left\{\begin{pmatrix} z & x & y \\ & 1 & x' \\ & & z^{*}\end{pmatrix}: z\in Z_n, z^{*}=J_{n}(^tz^{-1})J_{n}, x'=-J_{1}(\,^tx)J_nz^{\ast} \right\}, 
\end{align}
where $Z_n$ is the unipotent radical of the Borel subgroup of upper triangular matrices of $\GL_n$. 

Accordingly, we denote $\overline{Q}_n \subset H$ as the opposite parabolic subgroup with Levi subgroup $M_n$, and we let $\overline{N}_n$ be the corresponding unipotent radical of $\overline{Q}_n$.  
 
Let 
\begin{align} \label{omega}
w  = \begin{pmatrix} &\frac{1}{2} I_2&&& \\ &&&&I_{n-2} \\ &&(-1)^{n-2}&& \\ I_{n-2}&&&& \\ &&&2I_2
& \end{pmatrix} 
\end{align}
be a Weyl group element in $H$.

Let $Q_{G'}$ be a subgroup of $H$ whose points in an $F$-algebra $R$ are 
\begin{align*}
    &Q_{G'}(R)= \\
    & \left\{\begin{pmatrix} a&b&c&-2b&d \\ &1&0&0&-2b' \\ &&1&0&c' \\ &&&1&b' \\ &&&&a^{-1}\end{pmatrix} : a \in R^\times, c \in R, b'=-ba^{-1}, c'=-ca^{-1}, d=\frac{-4b'^2+c'^2}{2a^{-1}}\right\}.
\end{align*}

\subsection{Embedding of the groups} \label{2.2}

For the construction of the global integral, we use two embeddings of groups. Here we give the explicit maps we use in our integral. 

We have a sporadic isogeny between the algebraic groups $\SL_2\times \SL_2 $ and $G'=\mathrm{SO}_4$.  It induces a surjection $G \to G'$ given on points in an $F$-algebra by
\begin{align} \begin{split}
\iota_1: G(R) &\lto G'(R)\\
\left(\begin{pmatrix} a & b \\ c & d \end{pmatrix},\begin{pmatrix} a' & b'\\ c' & d' \end{pmatrix}\right) &\longmapsto \begin{pmatrix}  
aa' & -ab' & ba' & bb' \\ -ac' & ad' & -bc' & -bd' \\ ca' & -cb' & da' & db' \\ cc' & -cd' & dc'  & dd' \label{map}
\end{pmatrix}. \end{split}
\end{align}

In the construction in \cite[Section 2.1]{Kap}, the embedding of $G'$ in $H$ is given by $\mathrm{diag}(I_{n-2},G'',I_{n-2})$, where we denote $G'' \subset \mathrm{SO}_5$ as the image of the embedding of $G'$ in $\mathrm{SO}_5 \subset H$. The map $\iota_2: G' \to \SO_5$ is 
\begin{align*}
&\begin{pmatrix}
a_1 & a_2 & b_1 & b_2 \\  a_3 & a_4 & b_3 & b_4 \\ c_1 & c_2 & d_1 & d_2 \\ c_3 & c_4 & d_3 & d_4
\end{pmatrix} \longmapsto \\
&\resizebox{\textwidth}{!}{$
\begin{pmatrix}
a_1 & \frac{1}{4}a_2-\frac{1}{2}b_1 & \frac{1}{2}a_2+ b_1 & -\frac{1}{2}a_2+b_1 & b_2 \\ a_3-\frac{1}{2}c_1 & \frac{1}{4}a_4-\frac{1}{2}b_3+\frac{1}{2}-\frac{1}{8}c_2+\frac{1}{4}d_1 &  \frac{1}{2}a_4- b_3-\frac{1}{4}c_2-\frac{1}{2}d_1 & -\frac{1}{2}a_4- b_3+1\frac{1}{4}c_2-\frac{1}{2}d_1 & b_4-\frac{1}{2}d_2 \\
a_3+\frac{1}{2}c_1 & \frac{1}{4} a_4-\frac{1}{2}b_3+\frac{1}{8}c_2-\frac{1}{4} d_1 & \frac{1}{2}a_4+b_3+\frac{1}{4}c_2+\frac{1}{2}d_1 & -\frac{1}{2}a_4- b_3-\frac{1}{4}c_2+\frac{1}{2}d_1 & b_4+\frac{1}{2}d_2 \\ -\frac{1}{2}a_3+\frac{1}{4}c_1 &  -\frac{1}{8}a_4-\frac{1}{4} b_3+\frac{1}{4}+\frac{1}{16}c_2-\frac{1}{8}d_1 & -\frac{1}{4} a_4-\frac{1}{2} b_3+\frac{1}{8}c_2+\frac{1}{4} d_1 & \frac{1}{4}a_4-\frac{1}{2} b_3+\frac{1}{2}-\frac{1}{8}c_2+\frac{1}{4}d_1 & -\frac{1}{2}b_4+\frac{1}{4}d_2 \\ c_3 & \frac{1}{4}c_4-\frac{1}{2}ds_3 & \frac{1}{2}c_4+ d_3 & -\frac{1}{2}c_4+d_3 & d_4 
\end{pmatrix}$}.
\end{align*}

We define the composite map
\begin{align}\label{iota}
    \iota:=\iota_2 \circ \iota_1:G \lto \SO_5.
\end{align}

\subsection{Image of the maps} \label{2.3}

Using the map from $G$ to $G'=\SO_4$ and $G'$ in $\SO_5$ (which naturally embeds in $H$), we make the image of subgroups of $G$ in $\SO_5$ precise. 

Let $M_1$ be the subgroup of the  maximal torus of $G$  whose points in an $F$-algebra $R$ are
\begin{align} \label{M_1}
M_1(R):= \left\{\left(\begin{psmatrix}
1 & 0 \\ 0 & m^{-1} 
\end{psmatrix},
\begin{psmatrix}
m & 0 \\ 0 & 1 
\end{psmatrix}
\right): m \in R^{\times}\right\}. 
\end{align}

\begin{lemma} \label{lem1}
Let $M'_1=\iota(M_1)\subset \mathrm{SO}_5$.  Then
\begin{align} \label{M_2'}
M_1'(R)=\left\{\begin{psmatrix} m & & \\ & I_3 & \\ & & m^{-1} \end{psmatrix}: m \in R^{\times}\right\}. 
\end{align}
\end{lemma}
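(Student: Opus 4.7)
My plan is to verify the lemma by direct computation, working through the two embeddings in turn.

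First I would fix an arbitrary element
\[
g = \left(\begin{psmatrix} 1 & 0 \\ 0 & m^{-1} \end{psmatrix},\begin{psmatrix} m & 0 \\ 0 & 1 \end{psmatrix}\right) \in M_1(R)
\]
and plug it into the formula \eqref{map} for $\iota_1 \colon G \to G' = \mathrm{SO}_4$. With $a=d'=1$, $d=m^{-1}$, $a'=m$, and $b=c=b'=c'=0$, every off-diagonal entry of $\iota_1(g)$ involves at least one of these zero coordinates, and a direct read-off shows
\[
\iota_1(g) = \mathrm{diag}(m,1,1,m^{-1}) \in \mathrm{SO}_4(R).
\]

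Next I would apply the explicit formula for $\iota_2$ to this matrix. In the notation of the display defining $\iota_2$, the image $\iota_1(g)$ corresponds to $a_1=m$, $a_4=d_1=1$, $d_4=m^{-1}$, with all other coordinates vanishing. Inspection of the rows of the displayed $5 \times 5$ matrix shows that every first-column and every last-column entry other than $(1,1)$ and $(5,5)$ involves only the vanishing coordinates $a_3,b_4,c_1,c_3,d_2$ (or their analogues in row $1$ and column $5$), and hence is zero; similarly the $(1,j)$ and $(i,5)$ entries for $2 \le j,i \le 4$ vanish. The remaining nontrivial piece is the central $3 \times 3$ block, which depends only on $a_4,b_3,c_2,d_1$. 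Specializing $a_4=d_1=1$, $b_3=c_2=0$ and simplifying each of the nine entries — using the identity that these formulas are calibrated so as to send $I_4 \mapsto I_5$ — yields the identity matrix $I_3$ in the central block. Combining, $\iota(g) = \mathrm{diag}(m, I_3, m^{-1})$, which establishes the inclusion $M_1' \subseteq \{\mathrm{diag}(m,I_3,m^{-1})\}$, and varying $m \in R^\times$ gives the reverse inclusion.

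The argument is a bookkeeping exercise, and the only real obstacle is the sheer number of entries one must track in the formula for $\iota_2$. I would organize the computation by first listing the coordinates of $\iota_1(g)$ that are forced to vanish, so that the majority of entries of $\iota_2 \circ \iota_1(g)$ are eliminated at a glance, and then verify the central block by a short calculation. The invariance of $\iota_2(I_4)=I_5$ (i.e., that $\iota_2$ is an honest group embedding sending the identity to the identity) serves as a useful sanity check while performing the arithmetic.
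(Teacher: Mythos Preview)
Your proposal is correct and matches the paper's approach: the paper states the lemma with an immediate \qed, treating it as a routine computation via the explicit formulas for $\iota_1$ and $\iota_2$, which is exactly what you carry out. Your observation that the central $3\times 3$ block depends only on $a_4,b_3,c_2,d_1$, and that these coincide with their values at the identity (forcing the block to equal $I_3$), is a clean way to avoid chasing each entry individually.
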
 

Let $G_1$ be a subgroup of the  maximal torus of G whose points in an $F$-algebra $R$ are
\begin{align} \label{G_1}
G_1(R) := \left\{\left(\begin{psmatrix}
1 & 0 \\ 0 & b^{-1} 
\end{psmatrix},
\begin{psmatrix}
1 & 0 \\ 0 & b 
\end{psmatrix}
\right): b \in R^{\times}\right\}.
\end{align}

\begin{lemma}\label{lem2}
Let $G_1'=\iota(G_1)<\SO_5$. Then 
\[
G_1'(R)=\left\{\begin{pmatrix} 1&&&& \\ &\frac{1}{2}+\frac{1}{4}(b+b^{-1}) & \frac{1}{2}(b-b^{-1}) & 2(b-b^{-1})& \\ &(b-b^{-1})& \frac{1}{2}(b+b^{-1}) & -\frac{1}{2}(b-b^{-1})& \\ &\frac{1}{2}(\frac{1}{2}-\frac{1}{4}(b+b^{-1})) & -\frac{1}{4}(b-b^{-1}) & \frac{1}{2}+\frac{1}{4}(b+b^{-1})& \\&&&&1 \end{pmatrix}: b\in R^{\times}\right\}.
\]
\end{lemma}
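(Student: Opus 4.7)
The plan is to compose the two maps on a generic element of $G_1$ and observe that both steps simplify drastically because the input is diagonal. To avoid a clash with the entry names $a,b,c,d$ used in the formula for $\iota_1$, I will write the parameter of $G_1$ as $\beta$.

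\textbf{Step 1: apply $\iota_1$.} The element
\[
\left(\begin{psmatrix} 1 & 0 \\ 0 & \beta^{-1}\end{psmatrix}, \begin{psmatrix} 1 & 0 \\ 0 & \beta\end{psmatrix}\right) \in G_1(R)
\]
corresponds to $a=1, b=0, c=0, d=\beta^{-1}$ and $a'=1, b'=0, c'=0, d'=\beta$ in the matrix formula \eqref{map} for $\iota_1$. Every off-diagonal entry of that matrix is a product containing at least one of $b, c, b', c'$ and hence vanishes, while the four diagonal entries $aa',\, ad',\, da',\, dd'$ evaluate to $1,\,\beta,\,\beta^{-1},\,1$. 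Thus
\[
\iota_1\!\left(\begin{psmatrix} 1 & 0 \\ 0 & \beta^{-1}\end{psmatrix}, \begin{psmatrix} 1 & 0 \\ 0 & \beta\end{psmatrix}\right) = \mathrm{diag}(1, \beta, \beta^{-1}, 1) \in G'(R).
\]

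\textbf{Step 2: apply $\iota_2$.} In the displayed formula for $\iota_2$ in \cref{2.2} I substitute $a_1 = 1,\ a_4 = \beta,\ d_1 = \beta^{-1},\ d_4 = 1$, with all other entries zero. Then rows $1$ and $5$ of the image collapse to $e_1^{\,t}$ and $e_5^{\,t}$ respectively (since every entry of those rows other than the $(1,1)$ and $(5,5)$ positions is a linear combination of the vanishing $a_2,b_1,b_2,c_4,d_3$), and the same holds for columns $1$ and $5$ because none of $a_3, c_1, c_3, b_2, b_4, d_2$ survives. The remaining $3 \times 3$ central block depends only on $a_4$ and $d_1$, so each entry is a linear combination of $1$, $\beta$, and $\beta^{-1}$; collecting terms yields expressions purely in $\beta + \beta^{-1}$ and $\beta - \beta^{-1}$, which by direct comparison match the entries of the matrix displayed in the lemma.

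\textbf{Main obstacle.} The only real work is careful bookkeeping of the $\iota_2$ formula with its asymmetric coefficients $\tfrac14,\ \tfrac12,\ 2$; there is no conceptual difficulty. The shape of the answer — identities in the $(1,1)$ and $(5,5)$ entries and an otherwise trivial first/last row and column — is already forced by the fact that $\iota_1(G_1)$ is supported on the inner $2\times 2$ block of $G' = \mathrm{SO}_4$, and the embedding $\iota_2: G' \hookrightarrow \mathrm{SO}_5$ inserts this block into the central positions of $\mathrm{SO}_5$. After the simplification produced by diagonality, the lemma is a one-line substitution.
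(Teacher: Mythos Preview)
Your proposal is correct and is exactly the approach the paper takes: the paper records this lemma with only a \qed, treating it as the direct substitution you describe. Your observation that $\iota_1$ sends the element to $\mathrm{diag}(1,\beta,\beta^{-1},1)$, reducing the $\iota_2$ computation to the terms in $a_4$ and $d_1$ alone, is the right shortcut and matches the paper's implicit argument.
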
 

Let $A_1$ be a subgroup of $T_G$ whose points in $F$-algebra $R$ are 
\begin{align} \label{A_1}
A_1(R) := \left\{\left(\begin{pmatrix}
a_1 & 0 \\ 0 & {a_1}^{-1} 
\end{pmatrix},
\begin{pmatrix}
1 & 0 \\ 0 & 1 
\end{pmatrix}
\right): a_1 \in R^{\times}\right\}.
\end{align}

\begin{lemma}
Let $A_1'=\iota(A_1)$. Then 
\[
A_1'(R)=\left\{\begin{psmatrix} a_1&&&& \\ &\frac{1}{2}+\frac{1}{4}(a_1^2+a_1^{-2}) & \frac{1}{2}(a_1^2-a_1^{-2}) & 2(a_1^2-a_1^{-2})& \\ &(a_1^2-a_1^{-2})& \frac{1}{2}(a_1^2+a_1^{-2}) & -\frac{1}{2}(a_1^2-a_1^{-2})& \\ &\frac{1}{2}(\frac{1}{2}-\frac{1}{4}(a_1^2+a_1^{-2})) & -\frac{1}{4}(a_1^2-a_1^{-2}) & \frac{1}{2}+\frac{1}{4}(a_1^2+a_1^{-2})& \\&&&&a_1^{-1} \end{psmatrix}: a_1\in R^{\times}\right\}.
\]
\end{lemma}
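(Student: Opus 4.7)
The lemma is a routine direct computation that parallels the proofs of \cref{lem1,lem2}. My plan is to compute $\iota = \iota_2 \circ \iota_1$ applied to a general element of $A_1$ in two stages, using the explicit formulas from \cref{2.2}.

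First, I would apply $\iota_1$ from \eqref{map} to a general element $(\mathrm{diag}(a_1, a_1^{-1}), I_2) \in A_1(R)$. Substituting $(a,b,c,d) = (a_1, 0, 0, a_1^{-1})$ and $(a',b',c',d') = (1, 0, 0, 1)$ into the formula, most of the entries of the resulting $4 \times 4$ matrix vanish because $b, c, b', c'$ are all zero; what remains is the diagonal matrix $\mathrm{diag}(a_1, a_1, a_1^{-1}, a_1^{-1}) \in G'(R) = \SO_4(R)$.

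Second, I would apply $\iota_2$ to this diagonal matrix. Rather than grind through the lengthy $5 \times 5$ formula for $\iota_2$ directly, I would exploit that $\iota_2$ is a group homomorphism and factor within the maximal torus of $\SO_4$:
\[
\mathrm{diag}(a_1, a_1, a_1^{-1}, a_1^{-1}) = \mathrm{diag}(a_1, 1, 1, a_1^{-1}) \cdot \mathrm{diag}(1, a_1, a_1^{-1}, 1).
\]
The first factor lies in $\iota_1(M_1)$ and the second in $\iota_1(G_1)$, so by \cref{lem1,lem2} their images under $\iota_2$ are $M_1'(a_1)$ and $G_1'(a_1)$ respectively. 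Since $M_1'(a_1)$ is diagonal and only alters the $(1,1)$ and $(5,5)$ entries, the product of the two $5 \times 5$ matrices is immediate: the corners read $a_1$ and $a_1^{-1}$, while the inner $3 \times 3$ block is inherited unchanged from $G_1'(a_1)$, yielding the stated form of $A_1'(a_1)$.

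The only potential obstacle is the arithmetic bookkeeping built into the formula for $\iota_2$ (with its many fractional coefficients); the factorization above localizes all of that bookkeeping within the previously proved \cref{lem1,lem2}, reducing the remaining work to a diagonal-times-block-diagonal matrix multiplication. An alternative, which I would keep in reserve, is to substitute directly into the formula for $\iota_2$ with $a_1 = a_4 = a_1$, $d_1 = d_4 = a_1^{-1}$, and all other coordinates zero, and simply read off each entry; this serves as a useful internal check on the claim.
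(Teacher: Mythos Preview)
Your approach is sound and matches the paper's (which simply writes \qed): compute $\iota_1$ first, then $\iota_2$, and your factorization through $M_1'\cdot G_1'$ is an efficient way to reuse \cref{lem1,lem2} rather than grinding through the $5\times 5$ formula for $\iota_2$.

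One caution, though: if you actually carry the factorization through, the inner $3\times 3$ block you obtain is that of $G_1'$ with parameter $b=a_1$, hence entries like $\tfrac{1}{2}+\tfrac{1}{4}(a_1+a_1^{-1})$---\emph{not} $a_1^{2}+a_1^{-2}$ as printed in the lemma. Direct substitution into the formula for $\iota_2$ confirms this (e.g.\ the $(2,2)$ entry is $\tfrac{1}{4}a_4+\tfrac{1}{2}+\tfrac{1}{4}d_1=\tfrac{1}{2}+\tfrac{1}{4}(a_1+a_1^{-1})$), and so does the explicit image of the full torus written out later in \S\ref{5}, which specializes at $a_2=b=1$ to the version without squares. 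So the exponents $2$ in the displayed statement appear to be typos; your method proves the corrected statement, and your final line ``yielding the stated form'' should be amended accordingly.
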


Let $A_2$ be the subgroup of $T_G$ whose points in $F$-algebra $R$ are 
\begin{align} \label{A_2}
A_2(R) := \left\{\left(\begin{pmatrix}
1 & 0 \\ 0 & 1
\end{pmatrix},
\begin{pmatrix}
a_2 & 0 \\ 0 & a_2^{-1} 
\end{pmatrix}
\right): a_2 \in R^{\times}\right\}.
\end{align}

\begin{lemma}
Let $A_2'=\iota(A_2)$. Then 
\[
A_2'(R)=\left\{\begin{psmatrix} a_2&&&& \\ &\frac{1}{2}+\frac{1}{4}(a_2^2+a_2^{-2}) & \frac{1}{2}(-a_2^2+a_2^{-2}) & 2(-a_2^2+a_2^{-2})& \\ &(-a_2^2+a_2^{-2})& \frac{1}{2}(a_2^2+a_2^{-2}) & -\frac{1}{2}(a_2^2-a_2^{-2})& \\ &\frac{1}{2}(\frac{1}{2}-\frac{1}{4}(a_2^2+a_2^{-2})) & -\frac{1}{4}(-a_2^2+a_2^{-2}) & \frac{1}{2}+\frac{1}{4}(a_2^2+a_2^{-2})& \\&&&&a_2^{-1} \end{psmatrix}: a_2\in R^{\times}\right\}.
\]
\end{lemma}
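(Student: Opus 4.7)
The proof is a direct mechanical computation, entirely parallel to the preceding lemmas describing $M_1'$, $G_1'$, and $A_1'$. The strategy has two steps, matching the factorization $\iota = \iota_2 \circ \iota_1$ from (\ref{iota}).

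First I would substitute the parametric element $(I_2, \mathrm{diag}(a_2, a_2^{-1})) \in A_2(R)$ into the formula (\ref{map}) defining $\iota_1 \colon G \to G' = \mathrm{SO}_4$. With the first $\mathrm{SL}_2$ factor equal to the identity (so $a = d = 1$, $b = c = 0$) and the second factor diagonal (so $a' = a_2$, $d' = a_2^{-1}$, $b' = c' = 0$), every off-diagonal entry of (\ref{map}) vanishes and one obtains an explicit diagonal element of $\mathrm{SO}_4(R)$ whose entries lie in $\{a_2, a_2^{-1}\}$. This is the analogue of the corresponding first step in the companion $A_1'$ lemma, but with the roles of the two $\mathrm{SL}_2$ factors swapped, so one should expect the middle two diagonal entries to be interchanged relative to that case.

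Second, I would feed the resulting diagonal element of $G'(R)$ into the explicit formula for $\iota_2 \colon G' \to \mathrm{SO}_5$ displayed in Section~\ref{2.2}. Because only the four diagonal positions $a_1, a_4, d_1, d_4$ of the input are nonzero while all $a_2, a_3, b_i, c_i$ vanish, each of the twenty-five entries of the $5 \times 5$ output reduces to a short explicit expression, and reading them off row by row yields the displayed matrix. The swap of $\mathrm{SL}_2$ factors in the first step is exactly what produces the sign changes (relative to $A_1'$) in the middle $3 \times 3$ block — specifically in the positions that depend on the difference of the two surviving diagonal entries of $\iota_1(A_2)$ rather than their sum.

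The main obstacle is not conceptual but purely bookkeeping: the formula for $\iota_2$ contains many fractional coefficients and sign choices, so one must track them carefully when collecting surviving terms. A convenient internal consistency check is that the output must lie in $\mathrm{SO}_5$ with respect to $J_5$, i.e.\ satisfy $g^{t} J_5 g = J_5$; this relation pins down the signs uniquely and catches any arithmetic slip. Once the two substitutions are carried out cleanly, the result is precisely the stated parameterization of $A_2'(R)$, and the surjectivity onto that set is immediate since $a_2 \in R^\times$ is a free parameter.
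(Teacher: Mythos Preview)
Your proposal is correct and is exactly the approach the paper takes: the lemma is stated with a bare \qed\ and no argument, signalling that it is the same two-step substitution (first $\iota_1$, then $\iota_2$) you describe. Your observation that $\iota_1$ sends the generator of $A_2$ to $\mathrm{diag}(a_2,a_2^{-1},a_2,a_2^{-1})$, differing from the $A_1$ case only by swapping the middle two entries, is the key bookkeeping point and accounts precisely for the sign flips in the displayed $3\times 3$ block relative to $A_1'$.
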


Note that we have $T_G=A_1A_2G_1$. 

Let $U_2$ be the maximal unipotent radical of the Borel subgroup of upper triangular matrices of $G$. Let $N_2$ be a subgroup of the $U_2$ whose points in an $F$-algebra $R$ are
\begin{align} \label{N1}
N_1(R):=\left\{\left(\begin{pmatrix}
1 & \frac{c}{2} \\ 0 & 1
\end{pmatrix},
\begin{pmatrix}
1 & - c \\ 0 & 1
\end{pmatrix}\right): c\in R \right\}.
\end{align}

\begin{lemma} \label{lem3} 
Let $N_1'=\iota(N_1) <\SO_5$. Then
$$
N_1'(R)= \left\{\begin{pmatrix} 1&0&c&0&-\frac{1}{2}c^2 \\ &1&0&0&0 \\ &&1&0&-c \\ &&&1&0 \\ &&&&1\end{pmatrix}: c\in R \right\}.
$$
\end{lemma}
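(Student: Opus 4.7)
The proof is a direct matrix computation, carried out in two stages corresponding to the two maps $\iota_1$ and $\iota_2$ composing $\iota$. My plan is to parametrize $N_1(R)$ by $c \in R$ via
$$
n(c) = \left(\begin{psmatrix} 1 & c/2 \\ 0 & 1 \end{psmatrix}, \begin{psmatrix} 1 & -c \\ 0 & 1 \end{psmatrix}\right),
$$
track its image through each map, and read off the result.

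First, I would compute $\iota_1(n(c)) \in G'(R) = \SO_4(R)$ by substituting $(a,b,c_0,d) = (1, c/2, 0, 1)$ and $(a',b',c_0',d') = (1, -c, 0, 1)$ into the formula \eqref{map}. Since the ``lower-left'' entries $c_0$ and $c_0'$ both vanish, exactly half the entries of the resulting $4 \times 4$ matrix drop out, and what remains is an upper triangular unipotent matrix whose non-diagonal entries are linear or quadratic in $c$. This provides the data $a_i, b_i, c_i, d_i$ that feed into the second stage.

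Next, I would apply the explicit formula for $\iota_2: G' \to \SO_5$ recorded in \cref{2.2}, substituting these entries and simplifying. Because so many of the $a_i, b_i, c_i, d_i$ are zero (and others are $1$), each of the $25$ entries of $\iota_2(\iota_1(n(c)))$ becomes a short affine or quadratic polynomial in $c$. A direct verification should show that all off-diagonal entries vanish except for the $(1,3)$ entry, which equals $c$; the $(3,5)$ entry, which equals $-c$; and the $(1,5)$ entry, which equals $-c^2/2$. This is precisely the claimed shape of $N_1'(R)$.

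The only obstacle is organizational bookkeeping: the formula for $\iota_2$ is lengthy and involves many half-integer coefficients, so one must carefully confirm the cancellations (e.g.\ $\frac{1}{4}a_2 - \frac{1}{2}b_1 = \frac{c}{4} - \frac{c}{4} = 0$ for the $(1,2)$ entry, and $\frac{1}{2}a_2 + b_1 = \frac{c}{2} + \frac{c}{2} = c$ for the $(1,3)$ entry). No conceptual difficulty arises: since $N_1$ is a one-parameter unipotent subgroup and $\iota$ is an algebraic group homomorphism, the image must itself be a one-parameter unipotent subgroup of $\SO_5$, and the computation just pins down which one.
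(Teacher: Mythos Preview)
Your proposal is correct and matches the paper's approach: the paper records this lemma as a routine verification and simply marks it \qed\ without writing out any details. Your two-stage computation via $\iota_1$ then $\iota_2$, together with the sample cancellations you sketch for the $(1,2)$ and $(1,3)$ entries, is exactly what is intended.
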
 

Let $N_2$ be a subgroup of $U_2$ whose points in an $F$-algebra $R$ are
\begin{align} \label{N2}
N_{2}(R):=\left\{\left(\begin{pmatrix}
1 & b \\ 0 & 1
\end{pmatrix},
\begin{pmatrix}
1 & 2b \\ 0 & 1
\end{pmatrix}\right): b\in R \right\}.
\end{align}

\begin{lemma} \label{lem4}
Let $N_2'= \iota(N_2)<\SO_5$
$$
N_2'(R)=\left\{\begin{pmatrix} 1&b&0&-2b&0 \\ &1&0&0&2b \\ &&1&0&0 \\ &&&1&-b \\ &&&&1\end{pmatrix}: b\in R\right\}.
$$
\end{lemma}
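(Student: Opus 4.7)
The assertion is purely computational: one must compose the two maps $\iota_1$ and $\iota_2$ defined explicitly in \cref{2.2} and check that, when restricted to a generic element of $N_2$, the result agrees with the displayed formula. Accordingly, the plan is to carry out the two stages of the composition in succession, without any conceptual input.

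First, I would substitute the parametrization of $N_2$ from (\ref{N2}) into the formula (\ref{map}) for $\iota_1:G\to G'=\SO_4$. Taking the pair
\[
\left(\begin{psmatrix}1&b\\0&1\end{psmatrix},\;\begin{psmatrix}1&2b\\0&1\end{psmatrix}\right),
\]
one has $a=d=1$, the top-right entry of the first matrix equal to $b$, $c=0$, and analogously for the second factor with $b$ replaced by $2b$. Plugging these values into the $4\times 4$ matrix in (\ref{map}) yields a single upper-triangular unipotent matrix in $\SO_4$ whose entries are explicit polynomials in $b$; this records $\iota_1(N_2)$.

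Next, I would feed the entries $a_i, b_i, c_i, d_i$ of that $\SO_4$ matrix into the $5\times 5$ formula for $\iota_2:G'\to\SO_5$ given just after (\ref{iota}). Since $c_1=c_2=c_3=c_4=0$ and $a_3=b_3=0$ in the present case, most of the fractional entries vanish and the resulting matrix simplifies drastically. Collecting the surviving terms and identifying them with the entries of the target matrix in the lemma is the only substantive check.

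Finally, once the matrix $\iota(g)$ has been produced for $g\in N_2(R)$, comparison with the display in the lemma is direct: the map $b\mapsto \iota(g)$ is visibly a homomorphism, and both sides have the same dimension as $\GG_a$, so matching entries for a single $b$ (or equivalently reading off the coefficient of $b$ in each entry) suffices to conclude. The main obstacle is purely bookkeeping — keeping the signs, factors of $\tfrac12$, and block locations from the formula for $\iota_2$ correctly aligned — which is dispatched by a careful direct computation.
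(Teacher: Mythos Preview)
Your proposal is correct and matches the paper's approach exactly: the paper simply marks the lemma with \qed, indicating it is a direct computation obtained by composing the explicit formulas for $\iota_1$ and $\iota_2$, which is precisely what you outline.
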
 

Let $M_{SL_2^2}$ be a subgroup of $\SL_2 \times \SL_2$ whose points in an $F$-algebra $R$ are
\begin{align} \label{M_SL_2^2}
M_{SL_2^2}(R):=\left\{\left(\begin{pmatrix}
m & 0 \\ 0 & m^{-1}
\end{pmatrix},
\begin{pmatrix}
m & 0 \\ 0 & m^{-1}
\end{pmatrix}\right): m\in R^{\times} \right\}.
\end{align}

\begin{lemma} 
Let $Q_G=M_{SL_2^2}N_1N_2$. Then $\iota(Q_{G})(R)$ is  
$$
\left\{\begin{psmatrix} a^2&b&c&-2b&d \\ &1&0&0&-2b' \\ &&1&0&c' \\ &&&1&b' \\ &&&&a^{-2}\end{psmatrix} : a \in R^\times, c \in R, b'=-ba^{-1}, c'=-ca^{-1}, d=\frac{-4b'^2+c'^2}{2a^{-1}}\right\}. 
$$
\end{lemma}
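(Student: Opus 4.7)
The plan is a direct matrix computation that builds on Lemmas \ref{lem3} and \ref{lem4}, which already identify $\iota(N_1) = N_1'$ and $\iota(N_2) = N_2'$. The first step is to compute $\iota(M_{SL_2^2})$. Since $M_{SL_2^2}$ lies in $\SL_2 \times \SL_2 \subset G$, substituting $a = d' = m$, $d = a' = m^{-1}$ and $b = c = b' = c' = 0$ into \eqref{map} gives $\iota_1(M_{SL_2^2}) = \{\mathrm{diag}(m^2, 1, 1, m^{-2}) : m \in R^{\times}\} \subset \SO_4$. A direct substitution of this diagonal matrix into the formula for $\iota_2$ in Section \ref{2.2} then yields $\iota(M_{SL_2^2}) = \{\mathrm{diag}(m^2, 1, 1, 1, m^{-2})\} \subset \SO_5$, which is exactly the specialization $b = c = 0$, $a = m$ of the claimed image.

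The second step is to multiply out the three factors as a single $5 \times 5$ matrix. Writing a general element of $Q_G$ as $t \cdot n_1 \cdot n_2$ with $t \in M_{SL_2^2}$ carrying parameter $a = m$, $n_1 \in N_1$ carrying parameter $c$, and $n_2 \in N_2$ carrying parameter $b$, we compute $\iota(t)\iota(n_1)\iota(n_2)$ by multiplying the explicit shapes from Lemmas \ref{lem3}, \ref{lem4}, and the computation above. The effect of left-multiplication by the diagonal matrix $\mathrm{diag}(a^2, 1, 1, 1, a^{-2})$ is to rescale rows $1$ and $5$, which produces the relations $b' = -ba^{-1}$ and $c' = -ca^{-1}$ between the entries along row $1$ and column $5$.

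The main obstacle is verifying that the $(1,5)$ entry matches the claimed quadratic expression for $d$. Rather than extracting this entry from the matrix product directly, the cleanest route is to use the defining relation ${}^{t}\!g J_5 g = J_5$ of $\SO_5$: since each factor lies in $\SO_5$, so does the product, and the constraint imposed on the last column forces a quadratic relation among the remaining entries that can be solved for $d$. This reduces the remaining verification to matching the linear off-diagonal entries in rows $1$ through $4$, which is a routine but necessary bookkeeping step given the explicit shapes of $N_1'$ and $N_2'$.
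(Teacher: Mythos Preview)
Your approach is correct and is exactly the direct computation the paper has in mind (the lemma, like Lemmas \ref{lem1}--\ref{lem4}, is stated without proof as a routine verification). One small slip: in your substitution into \eqref{map} you write ``$a=d'=m$, $d=a'=m^{-1}$'', but for $M_{SL_2^2}$ both matrices are $\begin{psmatrix} m & 0 \\ 0 & m^{-1}\end{psmatrix}$, so the correct assignment is $a=a'=m$, $d=d'=m^{-1}$; with this the diagonal $\mathrm{diag}(m^2,1,1,m^{-2})$ you claim does follow, whereas your stated substitution would give $\mathrm{diag}(1,m^2,m^{-2},1)$ instead.
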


\subsection{Summary}

We have given a Levi decomposition 
$$
A_1A_2G_1N_1N_2=A_1G_1M_1N_1N_2
$$
of the Borel of upper triangular matrices in $G(R):=\{(g_1, g_2) \in \GL_{2}^2(R): \mathrm{det}g_1=\mathrm{det}g_2^{-1}\}$.  We let $G':=\mathrm{SO}_4$. Moreover, we have a commutative diagram
$$
\begin{tikzcd}
A_1A_2G_1N_1N_2 \arrow[hookrightarrow]{r} \arrow[rd] &G \arrow[d,"\iota_1"] \arrow[dd, bend left=90, "\iota", looseness=1]\\
&G':=\mathrm{SO}_4 \arrow[d,"\iota_2"]\\
&\mathrm{SO}_5
\end{tikzcd}.
$$

Our main theorems are stated without the use of $G'$, but we require it in the proofs.  

\subsection{Notations for local fields} 

Let $F$ be a global field and $v$ a place of $F$.  We denote by $\OO$ the ring of integers of $F$ and $\OO_v$ the ring of integers of $F_v$ for nonarchimedean $v$.  We denote by $\varpi_v$ a uniformizer for $\OO_v$ and $q_v:=|\OO_v/\varpi_v|$ the residual characteristic.  The idelic norm is denoted by $|\cdot|$ and the local norm on $F_v$ (normalized in the usual manner) is denoted by $|\cdot|_v$.  

\subsection{Measures}
We fix a nontrivial character $\psi:F \backslash \A_F \to \CC^\times$ and then choose Haar measures on $F_v$ for all places $v$ that are self-dual with respect to $\psi_i.$  This yields a measure on $\A_F.$
We let 
\[
d^{\times}x_v:= \zeta_v(1)\frac{dx_v}{|x|_v}.
\]
This is a Haar measure on $F_v^\times.$  

For every split reductive group $G$ we fix a maximal compact subgroup $K \leq G(\A_F)$ that is hyperspecial at all finite places and we normalize the Haar measure on $G(F_v)$ so that $K_v$ has measure $1.$  For the $F_v$-points of unipotent subgroups we normalize the Haar measure by transporting measures from $F_v$ to the root subgroups in the usual manner.

\section{The Summation Formula} \label{3}

In this section, we use the Rankin-Selberg integral for $\SO_{2\ell}\times \GL_n$ developed in \cite{Kap} to deduce the expression of our global integral when we take $\ell=2$. We state and prove our main theorem of this paper in \cref{thm} assuming the absolute convergence statement. 

The main theorem will be made rigorous by showing the absolute convergence of the sum of the global integrals in \cref{7}. 

Let $F$ be a number field. We first briefly recall the construction of the Rankin-Selberg integral in \cite[Section 3]{Kap}. Let $\tau$ be an irreducible automorphic representation for $\GL_n(\A_F)$. 

Let $\xi_{s}$ be a smooth holomorphic section from the (normalized induction) space 
\[
\mathrm{Ind}_{Q_n(\A_F)}^{H(\A_F)}(\tau \otimes |\det|^{s-\frac{1}{2}}).
\]
We have the Eisenstein series 
\begin{align}\label{Eis}
E_{\xi_s}(h):=\sum_{y\in Q_n(F) \backslash H(F)} \xi_s(yh,1),
\end{align}
where the first variable of $\xi_s$ is on $H$, and the second variable of $\xi_s$ is on $\GL_n$.

Let $\psi$ be a non-trivial additive character of $F\backslash \A_F$. For $u\in N_{H}(\A_F)$, let
\begin{align} \label{psibeta}
\psi_{1}(u)=\psi\left(\sum_{i=1}^{n-3}u_{i,i+1}+u_{n-2,n}+\frac{1}{2} u_{n-2,n+2}\right)
\end{align}
be a character of $N_{H}(\A_F)$, trivial on $N_H(F)$.

Then the $\psi_1$-coefficient of $E_{\xi_s}$ with respect to $N_{H}(\A_F)$ is
\[
E_{\xi_s}^{\psi_1}(h)=\int_{N_{H}(F)\backslash N_{H}(\A_F)} E_{\xi_s}(uh)\psi_1(u)du.
\]

Let $\varphi$ be a cusp form on $G(\A_F)$. The Rankin-Selberg integral in this case is
\[
I(\varphi,\xi_s)=\int_{G'(F)\backslash G'(\A_F)} \varphi(g) E_{\xi_s}^{\psi_1}(g)dg.
\]

This global integral converges absolutely in the whole $s$-plane except at the poles of the Eisenstein series $E_{\xi_s}(h)$, and the absolute convergence follows from the rapid decay of the cusp form $\varphi$ and the moderate growth of the Eisenstein series $E_{\xi_s}(h)$. 

Let $d_1,d_2$ be two even positive integers, and $V_1=\mathbb{G}_{a}^{d_1},V_2=\mathbb{G}_{a}^{d_2}$ be a pair of affine spaces over $F$ equipped with non-degenerate quadratic forms $\mathcal{Q}$ and $\mathcal{Q'}$ respectively.  Let $V:=V_1 \oplus V_2$. Let 
\begin{align}
Y(R):= \{ y=(y_1,y_2)\in V(R): \mathcal{Q}(y_1)=2\mathcal{Q'}(y_2) \}, 
\end{align}
and let
\begin{align*}
V':= \{ \gamma=(\gamma_1,\gamma_2)\in V(R): \gamma_i\neq 0 \}.
\end{align*}
We let $\mathbb{P}Y' \subset \mathbb{P}V$ be the corresponding quasi-projective scheme.  This is the scheme attached to the pair of quadratic spaces mentioned above. 

For a fixed $u\in F^{\times}$, let $\rho_u$ be the usual Weil representation on $\SL_2^2(\A_F)$ with quadratic forms $u\mathcal{Q}(\gamma)$. Let $f\in \S(V(\A_F)\times \A_F^{\times})$. 

We construct the Weil representation $\rho$ for $G(\A_F)$ following \cite[Section 2.1]{YYZ}, which extends the usual Weil representation of $\SL_2^2(\A_F)$ as follows:
\begin{align*}
&\rho(g)f(\gamma,u)  = \rho_u(g)f(\gamma,u), \quad g\in \SL_2^2(\A_F)\\
&\rho(\begin{psmatrix}
1&\\&a 
\end{psmatrix},\begin{psmatrix}
1&\\&a^{-1}
\end{psmatrix})f(\gamma,u)  = f(\gamma,a^{-1}u)|a|^{-\frac{\mathrm{dim}V_1}{4}}, \quad a\in\A_F^{\times}.
\end{align*} 
Here $\S(V(\A_F)\times \A_F^{\times})$ is the usual Schwartz space for vector spaces.

Let $f \in \S(V(\A_F)\times \A_F^{\times})$  be a Schwartz-Bruhat function such that 
\begin{align} \label{condition}
\rho(g)f(\gamma,u)=0 \text{ for all } g\in G(\A_F) \text{ and } (\gamma,u) \in V''(F) \times F^{\times},
\end{align}
where $V''(R)=\{(\gamma_1,\gamma_2)\in V(R): \mathcal{Q}(\gamma_1)=\mathcal{Q}'(\gamma_2)=0\}$. 

We let
\[
\Theta_{f}(g)=\sum_{(\gamma,u) \in V(F) \times F^{\times}} \rho(g)f(\gamma,u)
\]
be the theta function on $G(\A_F)$. 

Using the formula of $I(\varphi,\xi_s)$, we define the global integral as 
\[
I(\Theta_f,\xi_s) = \int_{G(F)\backslash G(\A_F)} \Theta_{f}(g) E_{\xi_s}^{\psi_1}(g)dg.
\]

Since we take $\rho(g)f(0)=0$ for all $g\in G(\A_F)$, $\theta_{f}(g)$ is cuspidal. Thus $I(\Theta_f, \xi_s)$ converges absolutely in the whole $s$-plane except at the poles of the Eisenstein series $E_{\xi_s}^{\psi_1}(h)$ similar as the Rankin-Selberg integral $I(\varphi,\xi_s)$.

By the action of Weil representation, we have 
\[
I(\Theta_f,\xi_s) = \int_{\SL_2^2(F)\backslash G(\A_F)} \sum_{\gamma \in V(F)} \rho(g)f(\gamma,1) E_{\xi_s}^{\psi_1}(g)dg.
\]

We first unfold the Eisenstein series $E_{\xi_s}$ for $\Re(s)$ large. 

\begin{lemma}
For $\Re(s)$ large, we have 
\begin{align} \label{unfold}
I(\Theta_f,\xi_s) = \int_{Q_{G}(F)\backslash G(\A_F)} \sum_{\gamma \in V(F)} \rho(g)f(\gamma,1) \int_{Y_H(F)\backslash N_H(\mathbb{A})}  \xi_s(w_0 u\iota(g),1)\psi_1(u)dudg.
\end{align}
Here
\[
w_0 = \begin{pmatrix} &&&&I_{n-2} \\ &I_2&&& \\ &&(-1)^{n-2}&& \\ &&&I_2& \\ I_{n-2} &&&&\end{pmatrix} \in H.
\]
\end{lemma}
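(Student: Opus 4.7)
The plan is to open the Fourier coefficient and then unfold the $y$-sum by a Bruhat-type analysis. Specifically, I would substitute
\[
E_{\xi_s}^{\psi_1}(\iota(g)) = \int_{N_H(F) \backslash N_H(\A_F)} \sum_{y \in Q_n(F) \backslash H(F)} \xi_s(yu\iota(g),1)\,\psi_1(u)\,du
\]
into $I_{\Theta_f}$ and interchange the sum with the $u$-integral. The interchange is justified for $\Re(s)$ large because $\Theta_f$ is cuspidal by hypothesis \eqref{condition} and so rapidly decreasing on $G(F) \backslash G(\A_F)$, while $E_{\xi_s}$ is of moderate growth off its polar set.

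Next I would rewrite the $y$-sum via the double coset decomposition $Q_n(F) \backslash H(F) / \iota(G(F))N_H(F)$. By the orbit analysis carried out in \cite[Section 3]{Kap}, the only double coset contributing a nonzero inner integral is the open one, represented by $w_0$; for every other representative $\delta$ the character $\psi_1$ restricts nontrivially to an $F$-rational subgroup of $N_H$, which may be taken as the inner variable of integration, forcing the contribution to vanish. Since $\iota$ factors as $G \xrightarrow{\iota_1} G' \xrightarrow{\iota_2} H$ and $\iota_2$ is exactly Kaplan's embedding of $\SO_4$ into $\SO_5 \subset H$, his orbit computation applies in our setting without modification.

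Then I would compute the stabilizer in $G(F)$ of the open double coset. From the explicit form of $w_0$ together with the matrix descriptions of $\iota$ given in \cref{lem1,lem2,lem3,lem4}, the stabilizer is precisely $Q_G(F) = M_{\SL_2^2}(F) N_1(F) N_2(F)$. Collapsing the $Q_n(F) \backslash H(F)$-sum into the exterior integration converts $\SL_2^2(F) \backslash G(\A_F)$ into $Q_G(F) \backslash G(\A_F)$. The piece of the stabilizer that lies inside $N_H$ is precisely $Y_H$ (by its definition in \cref{2.1.2}), and absorbing its integration reduces $N_H(F) \backslash N_H(\A_F)$ to $Y_H(F) \backslash N_H(\A_F)$, producing the stated identity.

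The main obstacle is the orbit computation itself: checking that the non-open double cosets vanish requires producing, for each representative $\delta$, an $F$-rational subgroup of $N_H$ on which $\psi_1$ is nontrivial and over which the inner integration can be performed first. Rather than redo this, I would invoke \cite[Section 3]{Kap} after verifying compatibility of $\iota$ with the embedding he uses. A secondary point is justifying the interchanges of sum and integral; this is routine once cuspidality of $\Theta_f$ is in hand, but the detailed convergence estimates are deferred to \cref{5,7,8}, as noted in the outline of the paper.
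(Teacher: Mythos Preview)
Your outline follows the paper's strategy closely and is largely correct, but there is one genuine gap in the collapsing step. Kaplan's orbit analysis in \cite[Section~3]{Kap} is carried out for the full $G'(F)=\SO_4(F)$ acting via $\iota_2$; his vanishing argument and identification of the surviving cell therefore produce a sum indexed by $Q_{G'}(F)\backslash G'(F)$. The outer integral you must collapse against, however, is over $\SL_2^2(F)\backslash G(\A_F)$, and $\iota_1(\SL_2^2(F))$ is only the spinor-norm-one subgroup of $G'(F)$, not all of $G'(F)$. So you cannot assert that Kaplan's computation ``applies without modification'': one has to check that the surviving $G'(F)$-orbit does not break into several $\iota_1(\SL_2^2(F))$-orbits, i.e.\ that the natural map $Q_G(F)\backslash \SL_2^2(F)\to Q_{G'}(F)\backslash G'(F)$ is a bijection. (Relatedly, your sentence ``the stabilizer in $G(F)$ is precisely $Q_G(F)$'' is not literally correct: $Q_G\subset \SL_2^2$, and it is the stabilizer in $\SL_2^2(F)$ that equals $Q_G(F)$; the stabilizer in $G(F)$ is strictly larger, containing for instance $M_1(F)$.)

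This missing identification is exactly the step the paper supplies. From the exact sequence
\[
1\longrightarrow\{\pm I_2\}\longrightarrow \SL_2^2(F)\xrightarrow{\ \iota_1\ } G'(F)\xrightarrow{\ \mathrm{sn}\ } F^\times/(F^\times)^2\longrightarrow 1
\]
one notes that the torus elements $\mathrm{diag}(\epsilon,I_3,\epsilon^{-1})$ lie in $Q_{G'}$ and realize every spinor-norm class, so $Q_{G'}(F)\cdot\iota_1(\SL_2^2(F))=G'(F)$ and hence $Q_G(F)\backslash \SL_2^2(F)\cong Q_{G'}(F)\backslash G'(F)$. With this in hand, your collapsing of $\sum_{Q_{G'}(F)\backslash G'(F)}$ against $\int_{\SL_2^2(F)\backslash G(\A_F)}$ into $\int_{Q_G(F)\backslash G(\A_F)}$ goes through, and the rest of your outline (the role of $Y_H$, the convergence justification) matches the paper.
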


\begin{proof}
By the embedding map $\iota_1$ from $G$ to $G'$ (see \cref{map}), we have a long exact sequence
\[
1 \to \{\pm I_2\} \to \SL_2^2(F) \xrightarrow{\iota_1} G'(F) \xrightarrow{\mathrm{sn}} H^1(F,\pm I_2)\cong F^{\times}/(F^{\times})^2  \to 1,
\]
where the map sn denotes the spinor norm. 

It follows that 
\begin{align*}
\iota_2(G'(F)) &= \bigcup_{\epsilon \in F^{\times}/(F^{\times})^2}\begin{psmatrix}
I_{n-2}&&&&\\&\epsilon&&&\\&&I_3&&\\&&&\epsilon^{-1}&\\&&&&I_{n-2}
\end{psmatrix}\iota(\SL_2^2)(F),
\\
Q_{G'}(F) &=  \bigcup_{\epsilon \in F^{\times}/(F^{\times})^2}\begin{psmatrix}
I_{n-2}&&&&\\&\epsilon&&&\\&&I_3&&\\&&&\epsilon^{-1}&\\&&&&I_{n-2}
\end{psmatrix} \iota(Q_G)(F).
\end{align*}

Then 
\[
 Q_G(F) \backslash \SL_2^2(F) \cong \iota(Q_G)(F) \backslash \iota(\SL_2^2)(F) \cong Q_{G'}(F) \backslash \iota_2(G'(F)).
\]

By \cite[Proof of Proposition 3.1, Page 151-154]{Kap}, after unfolding the Eisenstein series, the only non-vanishing contribution of $E_{\xi_s}^{\psi_1}$ in $I(\Theta_f, \xi, s)$ for $\Re(s)$ large is
\[
\sum_{y \in Q_{G'}(F) \backslash \iota_2(G'(F))} \int_{Y_H(F)\backslash N_H(\A_F)} \xi_s(w_0yu\iota(g),1)\psi_1(u)du,
\]
which is equivalent to 
\[
\sum_{y \in Q_G(F) \backslash  \SL_2^2(F)} \int_{Y_H(F)\backslash N_H(\A_F)} \xi_s(w_0yu\iota(g),1)\psi_1(u)du.
\]

Then we have
\begin{align*}
I(\Theta_f,\xi_s)
&=\int_{\SL_2^2(F) \backslash G(\A_F)} \sum_{\gamma \in V(F)} \rho(g)f(\gamma,1) \\
&\times \sum_{y\in Q_{G}(F)\backslash \SL_2^2(F)} \int_{Y_H(F)\backslash N_H(\A_F)}  \xi_s(w_0 yu\iota(g),1)\psi_1(u)dudg\\
&=\int_{Q_{G}(F)\backslash G(\A_F)} \sum_{\gamma \in V(F)} \rho(g)f(\gamma,1) \int_{Y_H(F)\backslash N_H(\A_F)}  \xi_s(w_0 u\iota(g),1)\psi_1(u)dudg.
\end{align*}
\end{proof}

Let 
\[
a_y = \begin{psmatrix}
-4Q'(y_2) & \\ & I_{n-1}
\end{psmatrix}\in \GL_n(\A_F).
\]
We define 
\begin{align} \label{Whittaker}
W_{\xi_s}(w u\iota(g),a_y) = \int_{Z_n(F) \backslash Z_n(\A)} \xi_s(w u\iota(g),a_y z)\psi_0^{-1}(z)dz, 
\end{align}
where $w$ is defined in \cref{omega}, f satisfies \cref{condition} ,and $\psi_0$ is the standard character on $Z_n(\A)$.

The main theorem of the paper is:
\begin{theorem}\label{thm}
Let 
\[
I(f,W_{\xi_s})(y)=\int_{U_2(\mathbb{A}_F)\backslash G(\mathbb{A}_F)} \rho(g)f(y,1)\int_{N^{\circ}(\mathbb{A}_F)}  W_{\xi_s}(w u\iota(g),a_y)\psi_1(u)dudg.
\]

For $g\in \mathrm{O}(V_1)(\A_F) \times \mathrm{O}(V_2)(\A_F)$, the sum $\sum_{y\in \mathbb{P}Y'(F)}I(f,W_{\xi_s})(gy)$ admits a meromorphic continuation to the whole $s$-plane which satisfies a functional equation
\[
\sum_{y\in \mathbb{P}Y'(F)} I(f,W_{\xi_s})(gy) = \sum_{y\in \mathbb{P}Y'(F)} I(f,M(\tau,s)W_{\xi_s})(gy).
\]
\end{theorem}

\begin{proof}
We use the defining property of the action of the Weil representation on $f$, and \cref{lem1} through \cref{lem4} to unfold the integral.

Firstly, using \cref{lem1}, \cref{lem3}, \cref{lem4}, and the action of $N_1$ on $f$ we have
\begin{align*}
I(\Theta_f,\xi_s) &=\int_{Q_{G}(F)\backslash G(\A_F)} \sum_{\gamma \in V(F)} \rho(g)f(\gamma,1) \int_{Y_H(F)\backslash N_H(\A_F)}  \xi_s(w_0 u\iota(g),1)\psi_1(u)dudg \\
&= \int_{N_1(\A_F)N_2(F)M_1(F) \backslash G(\A_F)} \int_{N_1(F) \backslash N_1(\A_F)} \sum_{\gamma \in V(F)} \rho(r'g)f(\gamma,1) \\& \times \int_{Y_H(F)\backslash N_H(\A_F)}  \xi_s(w_0 ur'\iota(g),1)\psi_1(u)dudr'dg \\
&=  \int_{N_1(\A)N_2(F)M_1(F) \backslash G(\A_F)} \int_{F\backslash \A_F} \sum_{\gamma \in V(F)} \rho(g)\psi(\frac{c}{2}Q(\gamma_1)- cQ'(\gamma_2))f(\gamma,1) \\ & \times \int_{Y_H(F)\backslash N_H(\A_F)}  \xi_s(w_0 uc\iota(g),1)\psi_1(u)dudcdg \\
&= \int_{N_1(\A_F)N_2(F)M_1(F) \backslash G(\A_F)} \sum_{\substack{ \gamma\in V'(F) \\ Q(\gamma_1)=2Q'(\gamma_2)}} \rho(g)f(\gamma,1) \\
& \times \left(\int_{Y_H(F)\backslash N_H(\A_F)}  \xi_s(w_0 u\iota(g),1)\psi_1(u)du\right)dg.
\end{align*}
Here the last line holds since by \cite[Proof of Propostion~ 3.1]{Kap}, the function
\[
g \mapsto \int_{Y_H(F)\backslash N_H(\A_F)}  \xi_s(w_0 u\iota(g),1)\psi_1(u)du
\]
is invariant on the left for $r'\in N_1(\A_F)$. 

Using the action of $N_2$ on $f$ we have
\begin{align*}
I(\Theta_f,\xi_s) &= \int_{N_1(\A_F)N_2(\A_F)M_1(F) \backslash G(\A_F)} \int_{N_2(F) \backslash N_2(\A_F)} \sum_{ \substack{ \gamma\in V'(F) \\ Q(\gamma_1)=2Q'(\gamma_2)}} \rho(zg)f(\gamma,1) \\ & \times \int_{Y_H(F)\backslash N_H(\A_F)}  \xi_s(w_0 uz\iota(g),1)\psi_1(u)dudzdg\\ 
&= \int_{N_1(\A_F)N_2(\A_F)M_1(F) \backslash G(\A_F)} \int_{N_2(F) \backslash N_2(\A_F)} \sum_{ \substack{ \gamma\in V'(F) \\ Q(\gamma_1)=2Q'(\gamma_2)}} \rho(g) \\ & \times \psi(bQ(\gamma_1)+2bQ'(\gamma_2))f(\gamma,1) \int_{Y_H(F)\backslash N_H(\A_F)}  \xi_s(w_0 uz\iota(g),1)\psi_1(u)dudbdg \\
&=  \int_{N_1(\A)N_2(\A_F)M_1(F) \backslash G(\A_F)}  \sum_{ \substack{ \gamma\in V'(F) \\  Q(\gamma_1)=2Q'(\gamma_2)}} \rho(g)f(\gamma,1)  \\ &\times \int_{N_2(F) \backslash N_2(\A_F)} \int_{Y_H(F)\backslash N_H(\mathbb{A})}  \xi_s(w_0 uz\iota(g),1)\psi(4bQ'(\gamma_2))\psi_1(u)dudzdg. \\
\end{align*}
Using the action of $M_1$ on $f$ we have
\begin{align*}
I(\Theta_f,\xi_s)
&=  \int_{N_1(\A_F)N_2(\A_F) \backslash G(\A_F)}  \sum_{\substack {\gamma \in \mathbb{P}V'(F) \\  Q(\gamma_1)=2Q'(\gamma_2)}} \rho(g)f(\gamma,1) \\ & \times \int_{N_2(F) \backslash N_2(\A_F)} \int_{Y_H(F)\backslash N_H(\A_F)}  \xi_s(w_0 uz\iota(g),1)\psi_1(u)\psi(4bQ'(\gamma_2))dudzdg \\
&=  \int_{N_1(\A_F)N_2(\A_F) \backslash G(\A_F)}  \sum_{y \in \mathbb{P}Y'(F)} \rho(g)f(y,1) \\ & \times \int_{N_2(F) \backslash N_2(\A_F)}\int_{Y_H(\A_F)\backslash N_H(\A_F)} \int_{Y_H(F) \backslash Y_H(\A_F)}  \xi_s(w_0 yuz\iota(g),1) \\
& \times \psi_1(nu)\psi(4b\mathcal{Q'}(y_2))dndudzdg.
\end{align*}

As in \cite[Page 42]{Kap2}, for fixed $u$ and $g$, the function on $N_2(\A_F)$ 
\[
z \mapsto \int_{Y_H(F) \backslash Y_H(\A_F)} \xi_s(w_0 nuzg,1)\psi_1(nu)\psi(4b\mathcal{Q'}(y_2))dn
\]
is well-defined since the elements of $N_2(\A_F)$ and $Y_H(\A_F)$ commute. Also, since $z$ normalizes $Y_H(\A_F)$, stabilizes $\psi_1(y)$ and $\xi_s(w_0z,1)=\xi_s(w_0,1)$, the function is left-invariant on $N_2(F)$. The mapping on $N_H(\A_F)$ 
\[
u \mapsto \int_{N_2(F) \backslash N_2(\A_F)} \int_{Y_H(F) \backslash Y_H(\A_F)} \xi_s(w_0 nuzg,1)\psi_1(yu)\psi(4b\mathcal{Q'}(y_2))dndz
\]
is left-invariant by $Y_H(\A_F)$. Also, $z$ in the integral normalizes $N_H(\A_F)$ and stabilizes $\psi_1$. Thus we can interchange $uz$ to $zu$ in the integral. We have
\begin{align*}
I(\Theta_f,\xi_s)
&= \int_{N_2(\A_F)N_2'(\A_F) \backslash G(\A_F)}  \sum_{y \in \mathbb{P}Y'(F)} \rho(g)f(y,1) \\ & \times \int_{Y_H(\A_F)\backslash N_H(\A_F)} \int_{N_2'(F) \backslash N_2'(\A_F)} \int_{Y_H(F) \backslash Y_H(\A_F)}  \xi_s((w_0 nzw_0^{-1})w_0u\iota(g),1)\\
&\times \psi_1(yu)\psi(4b\mathcal{Q'}(y_2))dndudzdg.
\end{align*}

Let 
\[
w' = \begin{pmatrix} 0&I_2 \\ I_{n-2}&0 \end{pmatrix}.
\]

As in \cite[Page 43]{Kap2}, the double integral $\int_{N_2'(F) \backslash N_2'(\A_F)} \int_{Y_H(F) \backslash Y_H(\A_F)}$ can be written as $\int_{\tilde{Z}_n(F) \backslash \tilde{Z}_n(\A_F)}$, where $w' \tilde{Z}_n w'^{-1}=Z_n$.

We note that now the character on the group $Z_n$ is 
\[
\psi_1'(z) = \psi(-4\mathcal{Q'}(y_2)z_{1,2}+\frac{1}{2}z_{2,3}+\sum_{i=3}^{n-1}z_{i,i+1})
\]
for $z\in Z_n(\A_F)$.

We use a conjugation by
\[
\begin{pmatrix} \frac{1}{2} I_2&0 \\ 0&I_{n-2} \end{pmatrix}
\]
to replace the character $\psi_1'$ to a character $\psi_{y, \mathcal{Q'}}$, where $\psi_{y, \mathcal{Q'}}$ is the generic character of $Z_n$ such that 
\begin{align} \label{psiQ}
\psi_{y, \mathcal{Q'}}(z) = \psi(-4\mathcal{Q'}(y_2)z_{1,2}+z_{2,3}+\cdots+z_{n-1,n}). 
\end{align}

Then we get the sum of integrals
\begin{align*}
& I(\Theta_f,\xi_s) \\
&= \int_{N_2(\A_F)N_2'(\A_F) \backslash G(\A_F)}  \sum_{y \in \mathbb{P}Y'(F)} \rho(g)f(y,1) \int_{Y_H(\A_F)\backslash N_H(\A_F)} W_{\xi_s}^{\psi_{y, \mathcal{Q'}}}(w u\iota(g),1)\psi_1(u)dudg \\
&= \sum_{y \in \mathbb{P}Y'(F)} \int_{U_{2}(\A_F) \backslash G(\A_F)}\rho(g)f(y,1) \int_{Y_H(\A_F)\backslash N_H(\A_F)} W_{\xi_s}^{\psi_{y, \mathcal{Q'}}}(w u\iota(g),1)\psi_1(u)dudg,
\end{align*}
where 
\begin{align*}
W_{\xi_s}^{\psi_{y, \mathcal{Q'}}}(w u\iota(g),1) = \int_{Z_n(F) \backslash Z_n(\A)} \xi_s(w u\iota(g),z)\psi_{y, \mathcal{Q'}}^{-1}(z)dz. 
\end{align*}
We recall that $W_{\xi_s}^{\psi_{y, \mathcal{Q'}}}$ may be factored into a product of local Whittaker functions if $\xi_s$ is a pure tensor (due to the uniqueness of the Whittaker model \cite{GK}), and hence the integral
\begin{align} \label{Eulerian}
\int_{U_{2}(\A_F) \backslash G(\A_F)}\rho(g)f(y,1) \int_{Y_H(\A_F)\backslash N_H(\A_F)} W_{\xi_s}^{\psi_{y, \mathcal{Q'}}}(w u\iota(g),1)\psi_1(u)dudg
\end{align}
is Eulerian. Thus $I(\Theta_f,\xi_s)$ is a sum of Eulerian integrals. To finish the proof of the theorem we need to use the lemma below.

\begin{lemma}
We have 
\begin{align*}
W_{\xi_s}^{\psi_{y,Q'}}(w u\iota(g),1) = W_{\xi_s}(w u\iota(g), a_y).
\end{align*}
\end{lemma}

\begin{proof}
We have
\[
\psi_0(a_yza_y^{-1}) = \psi_{y,Q'}(z)
\]
for $z\in Z_n$. 

Thus we have the function
\[
g'=w u\iota(g) \mapsto W_{\xi_s}(g', a_y) 
\]
lies in $\mathrm{Ind}_{Q_n}^{H}(\mathcal{W}_{\tau, s,\psi_{y,Q'}})$ since
\[
W_{\xi_s}(g', a_yz') = W_{\xi_s}(g', (a_yz'a_y^{-1})a_y) = \psi_0(a_yz'a_y^{-1})W_{\xi_s}(g', a_y)=  \psi_{y,Q'}(z')W_{\xi_s}(g', a_y)
\]
for $z'\in Z_n(\A)$.
\end{proof}

Thus we have
\begin{align*}
I(\Theta_f,\xi_s) = \sum_{y \in \mathbb{P}Y'(F)} \int_{U_{2}(\A_F) \backslash G(\A_F)}\rho(g)f(y,1) \int_{Y_H(\A_F)\backslash N_H(\A_F)} W_{\xi_s}(w u\iota(g),a_y)\psi_1(u)dudg. 
\end{align*}

The manipulations of the integral will be justified in \cref{8} by showing the sum converges absolutely for $\Re(s)$ large. Then for $\Re(s)$ large, we have 
\begin{align}
I_{\Theta_f} = \sum_{y\in \mathbb{P}Y'(F)} I(f,W_{\xi_s})(y).    
\end{align}
Therefore, $\sum_{y\in \mathbb{P}Y'(F)} I(f,W_{\xi_s})(y)$ admits a meromorphic continuation to all $s$-plane. 

By the functional equation of the Eisenstein series $E_{\xi_s}$, we obtain the desired functional equation for the sum of the global integral. Also, the poles of our sum of integrals come from the poles of $E_{\xi_s}$ (see \cite{GPSR, BG}). 
\end{proof}

\section{Bounds of the local integrals in the non-Archimedean case }\label{5}

Let $F$ be a non-Archimedean local field of characteristic zero. Let $K_G=G(\OO)$, and let $K_H=\SO_{2n+1}(\OO)$. In this section we give
bounds for the local factors of the global integral $I(f,W_{\xi_s})$ in the non-Archimedean case.

In \cref{5.1} and \cref{5.2} we first bound the inner integral of our local integral using some techniques from the proof of convergence of non-Archimedean Rankin-Selberg integral for $\SO_{2\ell+1} \times \GL_n$ in \cite[Section 4]{Sou}.  

We give a bound for our local integral in the general case in \cref{5.3}, and then in the unramified case in \cref{5.5}.

Let 
\begin{align*} 
\rho_{\tau,s}=\mathrm{Ind}_{Q_n(F)}^{H(F)}(\mathcal{W}(\tau, \psi_0) \otimes |\det |^{s-\tfrac{1}{2}}),
\end{align*}
and let $W_{\rho_{\tau,s}} \in \rho_{\tau,s}$. Here, as the notation indicates, we are viewing the induced representation as a space of smooth functions taking values in the Whittaker model.

The local integral is
\[
I_s(y):= I(f,W_{\rho_{\tau,s}})(y)=\int_{U_2(F)\backslash G(F)} \rho(g)f(y,1)\int_{N^{\circ}(F)} W_{\rho_{\tau,s}}(w ug,a_y)\psi_1(u)dudg. 
\]

For  $t=\mathrm{diag}(t_1,\dots,t_n) \in \GL_n(F)$, let
\begin{align}
t':=\begin{psmatrix} t & & \\ & 1 & \\ & & w_0t^{-1}w_0\end{psmatrix}\in T_H(F),
\end{align}
where $w_0 \in \GL_n(F)$ is the antidiagonal matrix.  

For a quasi-character $\eta:F^\times \to \CC^\times$ there is a unique real number $\Re(\eta)$ such that $\eta |\cdot|^{-\Re(\eta)}$ is unitary.  We say $\eta$ is \textbf{positive} if $\Re(\eta)>0$.

One can prove the following lemma using the same argument as that proving \cite[Lemma 4.4]{Sou}:

\begin{lemma}\label{5.1}
Let $(n,t',k) \in N_n(F) \times T_H(F) \times K_H$, we have
\[
|W_{\rho_{\tau,s}}(nt'k,a_y)|\leq |\det t|^{\Re(s)+\frac{n-1}{2}}\sum_{j=1}^{l}c_{j,s}\eta_{j}(a_yt).
\]
Here $c_{j,s} \in \CC$ and $\eta_{j}$ are positive quasi-characters of $T_H(F)$ which depend on $\tau$. \qed
\end{lemma}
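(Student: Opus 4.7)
The plan is to follow the strategy of \cite[Lemma 4.4]{Sou} essentially verbatim, adapting it to the presence of the parameter $a_y$ in the Whittaker model. The argument has two ingredients: the transformation law of the induced Whittaker section under $Q_n$, and the classical Jacquet--Shalika asymptotic bound for Whittaker functions on $\GL_n(F)$.

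First, write $t' = v(t)$ for $t = \mathrm{diag}(t_1,\dots,t_n) \in \GL_n(F)$, as in \eqref{vx}. Since $W_{\rho_{\tau,s}}$ lies in $\mathrm{Ind}_{Q_n}^H(\mathcal{W}_{\tau,s})$ (normalized induction) and $N_n$ is the unipotent radical of $Q_n$, we have
\[
W_{\rho_{\tau,s}}(n t' k, a_y) \;=\; \delta_{Q_n}^{1/2}(v(t))\,|\det t|^{s-\frac{1}{2}}\, W_{\tau,k}(a_y t),
\]
where, for each fixed $k \in K_H$, the function $g \mapsto W_{\tau,k}(g)$ is a Whittaker function in the Whittaker model $\mathcal{W}_\tau$ of $\tau$. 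Since $Q_n$ is the Siegel parabolic of $H = \SO_{2n+1}$, one has $\delta_{Q_n}^{1/2}(v(t)) = |\det t|^{n/2}$, which combines with $|\det t|^{s-1/2}$ to produce the expected exponent $|\det t|^{\Re(s) + (n-1)/2}$ after taking absolute values.

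Next, I invoke the Jacquet--Shalika bound: for any Whittaker function $W \in \mathcal{W}_\tau$, there exist finitely many positive quasi-characters $\eta_1,\dots,\eta_l$ of the diagonal torus $T_{\GL_n}(F)$ (depending only on $\tau$) such that
\[
|W(a)| \;\leq\; \sum_{j=1}^{l} c_j(W)\, \eta_j(a)
\]
for every $a$ in $T_{\GL_n}(F)$ with $\psi(a_i)$ bounded on the simple roots, i.e.\ when $a = a_y t$ with $t$ in the relevant support of the Whittaker function. Applying this to $W_{\tau,k}$ at the torus element $a_y t$ (which is still diagonal, since $a_y = \mathrm{diag}(-4\mathcal{Q}'(y_2), I_{n-1})$) and using the compactness of $K_H$ to absorb the $k$-dependence into uniform constants $c_{j,s}$, one obtains the stated bound.

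The only mildly delicate point is making the constants $c_{j,s}$ uniform in $k \in K_H$. This is handled exactly as in \cite{Sou}: the smooth dependence of $W_{\tau,k}$ on $k$ together with the compactness of $K_H$ allow one to extract a finite cover on which a common finite collection $\{\eta_j\}$ of positive characters and uniform constants $c_{j,s}$ suffice. Beyond this bookkeeping, no new input is required; the computation reproduces Soudry's argument with the point $a_y$ playing the role of the distinguished torus element that ultimately encodes the quadratic form $\mathcal{Q}'$.
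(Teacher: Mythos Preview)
Your proposal is correct and follows precisely the approach the paper indicates: the paper does not give its own proof but simply cites \cite[Lemma~4.4]{Sou} and marks the lemma with \qed, and your argument is exactly the adaptation of Soudry's proof (transformation law under $Q_n$ giving the factor $|\det t|^{\Re(s)+\frac{n-1}{2}}$, followed by the Jacquet--Shalika asymptotic bound on $\GL_n$ Whittaker functions, with uniformity in $k\in K_H$ by compactness). The only cosmetic point is that the $\eta_j$ are really quasi-characters of the diagonal torus of $\GL_n(F)$, viewed inside $T_H(F)$ via the embedding $t\mapsto v(t)$, which is consistent with how the paper uses them downstream.
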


The points of the group $w N^{\circ}w^{-1}$ in an $F$-algebra $R$ are 
\[
w N^{\circ}(R)w^{-1} = \left\{ 
\begin{psmatrix}
1 & &  &&&& \\ &1&&&&& \\ &&I_{n-2}&&&&& \\ 0&0&v_3'&1&&& \\ v_1 & v_2 & T & v_3 & I_{n-2} & & \\   0&0&v_2'&0&&1& \\ 0&0&v_1'&0&&&1
\end{psmatrix}: v_1, v_2, v_3 \in R^{n-2}, v_i'=-\,^{t}v_iJ_{n-2}, T\in M_{n-2}(R)
\right\}. 
\]

Let $v\in H(F)$ be a unipotent element of $H$ of the form 

\begin{align}\label{v}
v = \begin{psmatrix}
1&&&&&& \\ &1&&c&&-\frac{1}{2}c^2& \\ &&I_{n-2}&&&& \\ &&&1&&-c& \\ &&&&I_{n-2}&& \\ &&&&&1& \\ &&&&&&1 
\end{psmatrix}
\end{align}
for some  $c\in F$. 

\begin{lemma}\label{5.2} 
For $\Re(s)$ large, the integral 
\[
\int_{w N^{\circ}(F)w^{-1}} |W_{\rho_{\tau,s}}(uv,1)|du
\]
converges.
\end{lemma}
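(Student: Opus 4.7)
The plan is to apply the Iwasawa decomposition of $H(F)$ with respect to the parabolic $Q_n$, reduce to the pointwise bound of \cref{5.1}, and then carry out a careful estimate on the torus component. Concretely, for $u \in wN^{\circ}(F)w^{-1}$, write $uv = n(u)\,t'(u)\,k(u)$ with $n(u)\in N_n(F)$, $t'(u)\in T_H(F)$, and $k(u)\in K_H$. Specializing \cref{5.1} to $a_y=1$ bounds the integrand by
\[
|W_{\rho_{\tau,s}}(uv,1)| \;\leq\; |\det t(u)|^{\Re(s)+\frac{n-1}{2}} \sum_{j=1}^{l} c_{j,s}\, \eta_{j}(t(u)),
\]
so it suffices to prove that for each positive quasi-character $\eta$ of $T_H(F)$ the integral
\[
\int_{wN^{\circ}(F)w^{-1}} |\det t(u)|^{\Re(s)+\frac{n-1}{2}}\, \eta(t(u))\, du
\]
is finite for $\Re(s)$ sufficiently large.

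Next I would use the explicit matrix description of $wN^{\circ}(F)w^{-1}$ recorded just before the statement of the lemma to parametrize its elements by the tuple $(v_1,v_2,v_3,T) \in F^{n-2}\times F^{n-2}\times F^{n-2}\times M_{n-2}(F)$, with $du$ corresponding to the product of the standard Haar measures in these coordinates. The standard non-Archimedean Iwasawa decomposition for the $\mathrm{SL}_2$-embedding attached to a root $\alpha$ shows that for a one-parameter unipotent factor with entry $x\in F$, the corresponding entry of $t(u)$ has absolute value $\max(1,|x|)$. Applying this root by root to the coordinates of $(v_1,v_2,v_3,T)$, and absorbing the perturbation coming from the fixed element $v$ of \eqref{v} (which contributes only a polynomial shift in the entries, uniformly bounded in $c$), yields a bound of the form
\[
|\det t(u)|^{\Re(s)+\frac{n-1}{2}}\,\eta(t(u)) \;\leq\; \prod_{i} \max\bigl(1,|x_i|\bigr)^{-a_i \Re(s) + b_i},
\]
where the $a_i > 0$ are positive integers dictated by the positive coroots that appear in the Iwasawa decomposition of the opposite unipotent radical, and the $b_i$ depend only on $\eta$.

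The integral then factors as a product of one-dimensional integrals of the shape $\int_F \max(1,|x|)^{-a\Re(s)+b}\,dx$; each such factor converges provided $\Re(s) > (b+1)/a$. Taking $\Re(s)$ large enough uniformly in the finitely many characters $\eta_1,\ldots,\eta_l$ supplied by \cref{5.1} therefore makes the full multiple integral finite, establishing convergence.

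The main obstacle lies in the explicit Iwasawa bookkeeping of the second step: one must identify, for the block-shaped elements of $wN^{\circ}(F)w^{-1}$, which entries in the tuple $(v_1,v_2,v_3,T)$ govern which simple root and therefore determine the exponents $a_i$ of $\Re(s)$ in the estimate. This is precisely the calculation executed in \cite[Section 4]{Sou} for the Rankin--Selberg integral of $\mathrm{SO}_{2\ell+1}\times \GL_n$, and the same root-by-root analysis adapts here once the fixed element $v$ is treated as a bounded perturbation that can be absorbed into the $K_H$-factor.
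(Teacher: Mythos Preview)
Your overall plan---Iwasawa decomposition followed by \cref{5.1}---matches the paper, but the execution of the torus estimate contains a genuine gap.

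The claim that the integral ``factors as a product of one-dimensional integrals of the shape $\int_F \max(1,|x|)^{-a\Re(s)+b}\,dx$'' is not correct. The Iwasawa decomposition of a product of opposite root-group elements does \emph{not} split coordinate by coordinate: the torus part $t(u)$ depends on all of $(v_1,v_2,v_3,T)$ jointly, through minors of $uv$, not through independent $\max(1,|x_i|)$ factors. The root-by-root $\SL_2$ picture you invoke is valid for a single root, but the passage from one root to the next destroys the product structure. What Soudry actually does (and what the paper follows) is different: one bounds the torus entries via wedge-product norms of the bottom rows, obtaining
\[
[\mathcal{L}(uv)]^{-n}\le |\det t(u)| \le [\mathcal{L}(uv)]^{-1},\qquad
[\mathcal{L}(uv)]^{-2j}\le \Bigl|\tfrac{t_j}{t_{j+1}}\Bigr|\le [\mathcal{L}(uv)]^{2j},
\]
where $[\mathcal{L}(uv)]=\max\{1,\lVert\text{bottom $n$ rows of }uv\rVert\}$. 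This yields a single bound $|\det t(u)|^{\Re(s)+\frac{n-1}{2}}\eta(t(u))\le [u]^{-\Re(s)-\frac{n-1}{2}+C}[v]^{C}$, and then one integrates $[u]^{-N}$ over the whole group $wN^{\circ}w^{-1}$ (which converges for $N$ larger than its dimension). No factorization is needed or available.

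Your treatment of $v$ is also off: $v$ is a unipotent element of $H(F)$, not in $K_H$, so it cannot be ``absorbed into the $K_H$-factor.'' The paper handles $v$ by the two-sided estimate $[\mathcal{L}(uv)]\le [u][v]$ (used for the character $\eta$) together with the key observation that the rows of $uv$ contain all the entries of $u$ undisturbed, so $[\mathcal{L}(uv)]^{-1}\le [u]^{-1}$ (used for $|\det t|$). This is where the dependence on $c$ enters as the harmless factor $[v]^{C}$ in front of the integral.
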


\begin{proof}
For $u\in w N^{\circ}(F)w^{-1}$, we have
\begin{align}\label{x}
uv = \begin{psmatrix}
1 & &  &&&& \\ &1&&c&&-\frac{1}{2}c^2& \\ &&I_{n-2}&&&&& \\ 0&0&v_3'&1&&-c& \\ v_1 & v_2 & T & v_2c+v_3 & I_{n-2} &-\frac{1}{2}v_2c^2-v_3c & \\   0&0&v_2'&0&&1& \\ 0&0&v_1'&0&&&1
\end{psmatrix}.
\end{align}

We denote the Iwasawa decomposition of $uv$ as $uv=nt'k$ where $(n,t',k) \in N_n(F) \times T_H(F) \times K_H$, and we denote the $i$-th line of $uv$ as $(uv)_i$.

By \cref{5.1}, the integral is majorized by
\begin{align} \label{bound}
\sum_{j=1}^{\nu} c_{j,s}\int_{w N^{\circ}(F)w^{-1}}[D(uv)]^{\Re(s)+\frac{n-1}{2}}E_j(uv)du,
\end{align}
where 
\begin{align*}
D(nt'k)=|\det t|,\\
E_j(nt'k)=\eta_j(t).
\end{align*}

We use techniques following \cite[Lemma 1, Section 11.15]{Sou}. Let $\{e_1,\dots, e_{2n+1}\}$ be the standard basis of $F^{2n+1}$. We take the sup-norm on $\wedge^pF^{2n+1}$ according to the basis $\{e_{i_1}\wedge e_{i_2} \wedge \dots \wedge e_{i_p}| 1\leq i_1 < \dots < i_p \leq n\}$. $K_H$ preserves this norm. We have 

\[
\lVert v_1 \wedge v_2 \wedge \dots \wedge v_p \rVert  \leq \lVert v_1\rVert \cdot \lVert v_2\rVert \cdots \lVert v_p \rVert, v_j\in F^{2n+1}.
\]

Let $e_{n+1+j}=e_{-n+j-1}$ for $j=1,\dots,n$, we have 
\begin{align*}
|t_{j+1}\cdots t_1|&=\lVert (e_{-(j+1)}uv) \wedge \cdots (e_{-1}uv)\rVert \\
&= \lVert (e_{-(j+1)}+(uv)_{2n+1-j})\wedge \cdots (e_{-1}+(uv)_{2n+1})\rVert \\
&\leq \prod_{i=0}^{j}\mathrm{max}\{1,\lVert (uv)_{2n+1-j}\rVert\} = \prod_{i=0}^{j}[(uv)_{2n+1-j}].
\end{align*}
Here $uv$ is a matrix, $e_{-{j+1}}uv$ is a vector, $[(uv)_{2n+1-j}]=\max \{1, \lVert (uv)_{2n+1-j} \rVert \}$ and $\lVert \cdot \rVert$ denotes the sup-norm.
For 
$$
h=\begin{psmatrix} h_1\\ \vdots \\ h_{2n+1}\end{psmatrix} \in H(F),
$$
we let
\begin{align} \label{B}
\mathcal{L}(h)=\begin{psmatrix} h_{n+2}\\ \vdots \\ h_{2n+1}\end{psmatrix}
\end{align}
be the bottom $n$ rows of $h$.

Since the coordinates of $\mathcal{L}(uv)$ appear in the coefficients of 
\[
t_{j+1}\cdots t_1 = e_{-(j+1)}\mathcal{L}(uv)\wedge \cdots e_{-1}\mathcal{L}(uv),
\]
we have
\[
|t_{j+1} \cdots t_1|^{-1} \geq \mathrm{max}\{1,\lVert \overline{x}_{2n+1-j}' \rVert,\dots,\lVert \overline{x}_{2n+1}' \rVert\}.
\]

We denote 
\begin{align}\label{symbol-[]}
[\mathcal{L}(uv)]=\max \{1, \lVert \mathcal{L}(uv) \rVert \},
\end{align}
where $\lVert \cdot \rVert$ is the sup-norm.

Then we have
\begin{align}\label{eta bound}
[\mathcal{L}(uv)]^{-2j}\leq |\frac{t_j}{t_{j+1}}|\leq [\mathcal{L}(uv)]^{2j}, j=1,\dots,n-1,
\end{align}
and 
\begin{align}\label{det bound}
[\mathcal{L}(uv)]^{-n}\leq D(u'v)\leq [\mathcal{L}(uv)]^{-1}.
\end{align}

Since $[\mathcal{L}(uv)] \leq [u][v]$, we have 

\begin{align} \label{E}
E_j(uv) \leq [\mathcal{L}(uv)]^{C} \leq [u]^{C}[v]^C
\end{align}
for some positive constant $C$ which depends only on $\tau$. 

By the structure of $\mathcal{L}(uv)$, we have 
\begin{align*}
[\mathcal{L}(uv)]^{-1} &= \max \{1, \lVert v_1 \rVert, \lVert v_2 \rVert, \lVert v_3 \rVert, \lVert T \rVert,  \lVert -\frac{1}{2}v_2c^2 \rVert, \lVert v_2c\rVert, \lVert v_3c \rVert\}^{-1} \\
&\leq \max \{1, \lVert v_1 \rVert, \lVert v_2 \rVert, \lVert v_3 \rVert, \lVert T \rVert \}^{-1}\\
&= [u]^{-1}.
\end{align*} 
Here the middle line holds since if $\max\{1, \lVert -\frac{1}{2}v_2c^2 \rVert, \lVert v_2c\rVert, \lVert v_3c \rVert\}=1$, we have equality, and we have 
$$\max \{1, \lVert v_1 \rVert, \lVert v_2 \rVert, \lVert v_3 \rVert, \lVert T \rVert,  \lVert -\frac{1}{2}v_2c^2 \rVert, \lVert v_2c\rVert, \lVert v_3c \rVert\} > \max \{1, \lVert v_1 \rVert, \lVert v_2 \rVert, \lVert v_3 \rVert, \lVert T \rVert \}$$
otherwise.

Thus we have 
\begin{align} \label{D}
D(uv) \leq [\mathcal{L}(uv)]^{-1} \leq [u]^{-1}. 
\end{align}

By \cref{E} and \cref{D}, for $\Re(s)+\frac{n-1}{2}-C>0$, \cref{bound} is bounded by

\begin{align}\label{Unram bound}
\sum_{j=1}^{\nu} c_{j,s}[v]^C \int_{w N^{\circ}(F)w^{-1}}[u]^{-\Re(s)-\frac{n-1}{2}+C}du,
\end{align}
which converges absolutely. 

\end{proof}

Now we proceed to bound our local integral in the general case. 

\begin{lemma}\label{5.3}
For $\Re(s)$ large enough, we have
\[
I_s(y) \ll_{\tau,s}  \int_{F^{\times}}\int_{F^{\times}} f'(a_1y_1, a_2y_2) |a_1|^{\Re(s)+\frac{d_1-n-1}{2}}|a_2|^{\Re(s)+\frac{d_2-n-1}{2}} d^{\times}a_1d^{\times}a_2.
\]
Here $f' \in \S(V(F))$ depends on $f$ and $\ll$ is synonymous with the big O notation. 

\end{lemma}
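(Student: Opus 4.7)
The plan is to apply an Iwasawa decomposition to the outer integral, absorb the compact factors $K_G$ and $K_H$ into Schwartz-function majorants, and bound the inner $N^{\circ}$-integral using \cref{5.1} and the techniques from the proof of \cref{5.2}. Using the decomposition $G(F)=U_2(F)T_G(F)K_G$, parametrize $T_G=A_1A_2G_1$ by $t=A_1(\alpha_1)A_2(\alpha_2)G_1(b)$ as in \cref{2.3}. A direct computation of the modular character gives $\delta_{B_G}(t)=|\alpha_1|^{2}|\alpha_2|^{2}$ (the $b$-dependence cancels), and compactness of $K_G$ lets one dominate $|\rho(k)f|$ uniformly by a single Schwartz function $f^{(1)}\in\mathcal{S}(V(F)\times F^{\times})$. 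The Weil-representation formulas recalled in \cref{3} then give
\[
|\rho(t)f^{(1)}(y,1)|\leq |\alpha_1|^{d_1/2}|\alpha_2|^{d_2/2}|b|^{d_1/4}\,f^{(2)}(\alpha_1y_1,\alpha_2y_2,b)
\]
for some new Schwartz function $f^{(2)}\in\mathcal{S}(V(F)\times F^{\times})$.

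For the inner integral, I would first apply the change of variable $u\mapsto w^{-1}uw$ so that the domain becomes $wN^{\circ}(F)w^{-1}$, and then perform an Iwasawa decomposition $w\iota(tk)=n(t,k)t'(t,k)\kappa(t,k)$ in $H(F)$. The $K_H$-part $\kappa(t,k)$ drops out of the Whittaker-function bounds; the $N_H$-part $n(t,k)$ can be merged with $u'\in wN^{\circ}w^{-1}$ at the cost of a bounded change of variable; the $T_H$-part $t'(t,k)$ is explicit in $(\alpha_1,\alpha_2,b)$ via \cref{lem1,lem2} and the analogues for $A_1$, $A_2$. Applying \cref{5.1} together with the integrability argument of \cref{5.2} bounds the inner integral by $|\det t'(t,k)|^{\Re(s)+(n-1)/2}\sum_j c_{j,s}\,\eta_j(a_yt'(t,k))$ times a polynomial-growth factor absorbed into the Schwartz majorant.

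Assembling the three bounds together with $\delta_{B_G}^{-1}(t)=|\alpha_1|^{-2}|\alpha_2|^{-2}$ produces a triple integral of the form
\[
\int_{F^{\times}}\!\!\int_{F^{\times}}\!\!\int_{F^{\times}}f^{(2)}(\alpha_1y_1,\alpha_2y_2,b)\,|\alpha_1|^{E_1(s)}|\alpha_2|^{E_2(s)}|b|^{E_3(s)}\,d^{\times}\alpha_1\,d^{\times}\alpha_2\,d^{\times}b.
\]
Since $f^{(2)}$ is compactly supported in the $F^{\times}$-direction, the $b$-integral converges absolutely for $\Re(s)$ large and collapses into a single Schwartz function $f'(\alpha_1y_1,\alpha_2y_2)$. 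Keeping track of the three exponent contributions on each $|\alpha_i|$---namely $d_i/2$ from the Weil representation, $-2$ from $\delta_{B_G}^{-1}$, and the residual $\Re(s)+(3-n)/2$ from the $|\det|^{\Re(s)+(n-1)/2}\eta_j$ factor---yields the claimed exponents $E_i(s)=\Re(s)+(d_i-n-1)/2$ after relabeling $\alpha_i=a_i$.

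The main obstacle is the Whittaker step: because $\iota(T_G)$ does not sit diagonally inside $T_H$ (the matrices of \cref{lem1,lem2} and their analogues for $A_1,A_2$ are genuinely non-diagonal), one must carry out the Iwasawa decomposition in $H$ explicitly and verify that the characters pulled back through $|\det|^{\Re(s)+(n-1)/2}\eta_j$ combine with the Weil and modular contributions in exactly the right way. The fact that the positive quasi-characters $\eta_j$ depend only on $\tau$ concentrates all $s$-dependence in the $|\det|^{\Re(s)}$ factor, which simplifies the final bookkeeping of exponents.
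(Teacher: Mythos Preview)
Your outline follows the paper's approach closely (Iwasawa on $G$, Weil-representation formulas, then \cref{5.1}--\cref{5.2} on the inner integral), but two of your steps are too optimistic and hide the genuine work.

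First, the unipotent part $n(t,k)$ arising from the Iwasawa decomposition of $\iota(t)$ cannot be ``merged with $u'$ at the cost of a bounded change of variable''. The image $\iota(G_1)$ lands in a copy of $\SO_3$ inside $H$, and its Iwasawa decomposition there produces a unipotent $n'$ whose conjugate $wn'w^{-1}$ is exactly of the form $v$ in \eqref{v}, with $[v]\asymp |\lfloor b^{-1}a_1a_2^{-1}\rfloor|^{-2}$. This is unbounded in $b,a_1,a_2$, and is precisely why \cref{5.2} is stated for $\int|W(uv,1)|\,du$ rather than for $\int|W(u,1)|\,du$: the output carries a factor $[v]^{C}$, contributing extra powers of $|\lfloor b^{-1}a_1a_2^{-1}\rfloor|$ that must be tracked, not absorbed.

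Second, and more seriously, you do not explain how to remove the $y$-dependence in $\eta_j(a_y\,t'(t,k))$. Since $a_y=\mathrm{diag}(-4\mathcal{Q}'(y_2),I_{n-1})$, a naive application of \cref{5.1} leaves a factor of $|\mathcal{Q}'(y_2)|$ to some power, and then neither the implied constant nor $f'$ would be independent of $y$---which is what the lemma (and its later global use) require. The paper handles this by observing that $a_y\,\mathrm{diag}(a_1a_2,\lfloor b^{-1}a_1a_2^{-1}\rfloor,I_{n-2})\,t$ lies in the support of a gauge on $\GL_n(F)$, yielding $|{-}4\mathcal{Q}'(y_2)a_1a_2|\le c_1[u]\,|\lfloor b^{-1}a_1a_2^{-1}\rfloor|^{-2}$; this eliminates $\mathcal{Q}'(y_2)$ from the bound at the cost of further powers of $[u]$ and $|\lfloor\cdot\rfloor|$. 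Without this step your argument does not close.
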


\begin{proof}
We apply the Iwasawa decomposition of $U_2(F)\backslash G(F)$ with respect to the usual Borel subgroup of $G(F)$. Since $W_{\rho_{\tau,s}}$ is smooth, it suffices to bound
\[
\int_{T_G(F)} \int_{K_G} |\rho(ak)f(y,1)| \delta_{B_G}^{-1}(a)\int_{N^{\circ}(F)}|W_{\rho_{\tau,s}}(w u\iota(ak),a_y)|dudadk.
\]

By the defining property of the Weil representation, the above integral is 
\begin{align*}
& \int_{G_1} \int_{A_1(F)} \int_{A_2(F)} \int_{K_G} |\rho(xa_1a_2k)f(y,1)| \delta_{B_G}^{-1}(a_1a_2) \\
& \times \int_{N^{\circ}(F)}|W_{\rho_{\tau,s}}|(w u\iota(xa_1a_2k),a_y)duda_1da_2dk  \\
= & \int_{F^{\times}} \int_{F^{\times}} \int_{F^{\times}} \int_{K_G} |\rho(k)f(a_1y_1,a_2y_2,b)||a_1|^{\frac{d_1}{2}}|a_2|^{\frac{d_2}{2}} |a_1a_2|^{-2}  \\
& \times \int_{N^{\circ}(F)} |W_{s, \phi_Q}|(w u\iota\left(\begin{psmatrix}
a_1& \\ &a_1^{-1}b^{-1}
\end{psmatrix},\begin{psmatrix}
a_2&\\&a_2^{-1}b
\end{psmatrix}\right)\iota(k),a_y)dud^{\times}bd^{\times}a_1d^{\times}a_2dk.
\end{align*}
Here $\iota\left(\begin{psmatrix}
a_1& \\ &a_1^{-1}b^{-1}
\end{psmatrix},\begin{psmatrix}
a_2&\\&a_2^{-1}b
\end{psmatrix}\right)$ is 
\[
\begin{psmatrix}
I_{n-1}&&&&&&\\&a_1a_2&&&&& \\ &&\frac{1}{2}+\frac{1}{4}(b^{-1}a_1a_2^{-1}+ba_1^{-1}a_2) & \frac{1}{2}(b^{-1}a_1a_2^{-1}-ba_1^{-1}a_2) & 2(b^{-1}a_1a_2^{-1}-ba_1^{-1}a_2)&& \\ &&(b^{-1}a_1a_2^{-1}-ba_1^{-1}a_2)& \frac{1}{2}(b^{-1}a_1a_2^{-1}+ba_1^{-1}a_2) & -\frac{1}{2}(b^{-1}a_1a_2^{-1}-ba_1^{-1}a_2)&& \\ &&\frac{1}{2}(\frac{1}{2}-\frac{1}{4}(b^{-1}a_1a_2^{-1}+ba_1^{-1}a_2)) & -\frac{1}{4}(b^{-1}a_1a_2^{-1}-ba_1^{-1}a_2) & \frac{1}{2}+\frac{1}{4}(b^{-1}a_1a_2^{-1}+ba_1^{-1}a_2)&&\\  &&&&&(a_1a_2)^{-1}& \\&&&&&&I_{n-1}
\end{psmatrix}. 
\]

Apply the Iwasawa decomposition to $\SO_3(F)$ we have for $b\in F^{\times}$,
\[
\begin{psmatrix} \frac{1}{2}+\frac{1}{4}(b+b^{-1}) & \frac{1}{2}(b-b^{-1}) & 2(b-b^{-1}) \\ (b-b^{-1})& \frac{1}{2}(b+b^{-1}) & -\frac{1}{2}(b-b^{-1}) \\ \frac{1}{2}(\frac{1}{2}-\frac{1}{4}(b+b^{-1})) & -\frac{1}{4}(b-b^{-1}) & \frac{1}{2}+\frac{1}{4}(b+b^{-1}) \end{psmatrix}=\begin{psmatrix}
1 & c & -\frac{1}{2}c^2  \\
0 & 1 & -c \\
0 & 0 & 1 
\end{psmatrix} \begin{psmatrix}
\lfloor b \rfloor  & 0 & 0  \\
0 & 1 & 0 \\
0 & 0 & \lfloor b \rfloor ^{-1} 
\end{psmatrix} k'.
\]
Here $k'\in \SO_3(\OO)\subset K_H$, 
and 
\begin{align}\label{floornotation}
\lfloor b \rfloor = \begin{cases} b \textrm{ if } |b|\leq 1 \\
b^{-1} \textrm{ if } |b|> 1
\end{cases},
\end{align}
\[
c =\begin{cases}-2 & \textrm{ if }\lfloor b \rfloor =b\\
2   &  \textrm{ if } \lfloor b \rfloor =b^{-1}
\end{cases}.
\]

We also notice that
\[
\begin{psmatrix}
1 & c & -\frac{1}{2}c^2  \\
0 & 1 & -c \\
0 & 0 & 1 
\end{psmatrix} \begin{psmatrix}
\lfloor b^2 \rfloor  & 0 & 0  \\
0 & 1 & 0 \\
0 & 0 & \lfloor b \rfloor ^{-1}
\end{psmatrix} = \begin{psmatrix}
\lfloor b \rfloor  & 0 & 0  \\
0 & 1 & 0 \\
0 & 0 & \lfloor b \rfloor ^{-1}
\end{psmatrix} \begin{psmatrix}
1 & c\lfloor b \rfloor ^{-1} & -\frac{1}{2}c^2\lfloor b \rfloor ^{-2}  \\
0 & 1 & -c\lfloor b \rfloor ^{-1} \\
0 & 0 & 1 
\end{psmatrix}.
\]

Thus our local integral is majorized by 
\begin{align}\begin{split}\label{eq4.0.13}
&\int_{F^{\times}} \int_{F^{\times}} \int_{F^{\times}} \int_{K_G} |\rho(k)f(a_1y_1,a_2y_2,b)| |a_1|^{\frac{d_1}{2}}|a_2|^{\frac{d_2}{2}} |a_1a_2|^{-2}  \\
&\times \int_{N^{\circ}(F)}|W_{\rho_{\tau,s}}(wu \; \mathrm{diag}(I_{n-2},a_1a_2,I_3,(a_1a_2)^{-1},I_{n-2}) tn'k''\iota(k), a_y)|dud^{\times}bd^{\times}a_1d^{\times}a_2dk.
\end{split}
\end{align}
Here
\begin{align} 
t &= \begin{psmatrix}
I_{n-1}&&&& \\ &\lfloor b^{-1}a_1a_2^{-1} \rfloor &&&  \\
&& 1 &&  \\
&&& \lfloor ba_1^{-1}a_2 \rfloor  & \\
&&&&I_{n-1}
\end{psmatrix}, \\
n'&=\begin{psmatrix}
I_{n-1}&&&& \\
&1 & c\lfloor ba_1^{-1}a_2 \rfloor & -\frac{1}{2}c^2\lfloor ba_1^{-1}a_2 \rfloor^2&  \\
&& 1 & -c\lfloor ba_1^{-1}a_2 \rfloor& \\
&&&1& \\
&&&&I_{n-1}\\
\end{psmatrix}, \label{n'} \\
 k''&= \begin{psmatrix}
I_{n-1} && \\ 
&k'& \\
&&  I_{n-1}
\end{psmatrix}.
\end{align}

We have
\begin{align*}
&w \begin{psmatrix}
I_{n-2}&&&&&& \\ 
&a_1a_2&&&&& \\
&&\lfloor b^{-1}a_1a_2^{-1} \rfloor &&&&\\
&&&1&&& \\
&&&&\lfloor ba_1^{-1}a_2 \rfloor&&\\
&&&&&(a_1a_2)^{-1}& \\
&&&&&&I_{n-2}
\end{psmatrix} w^{-1} \\
=& \begin{psmatrix}
a_1a_2&&&&&& \\ 
&\lfloor b^{-1}a_1a_2^{-1} \rfloor &&&&& \\
&&I_{n-2}&&&&\\
&&&1&&& \\
&&&&I_{n-2}&&\\
&&&&&\lfloor ba_1^{-1}a_2 \rfloor& \\
&&&&&&(a_1a_2)^{-1}
\end{psmatrix}.
\end{align*}
Then by the property of $W_{\rho_{\tau,s}}$ \cref{eq4.0.13} is
\begin{multline*}
\int_{F^{\times}} \int_{F^{\times}} \int_{F^{\times}} \int_{K_G} |\rho(k)f(a_1y_1,a_2y_2,b)||a_1|^{\frac{d_1}{2}}|a_2|^{\frac{d_2}{2}} |a_1a_2|^{\Re(s)-\frac{n+1}{2}} |\lfloor b^{-1}a_1a_2^{-1} \rfloor|^{\Re(s)-\frac{n-3}{2}} \\
\times \int_{N^{\circ}(F)} |W_{\rho_{\tau,s}}(w un'k''\iota(k), a_y\mathrm{diag}(a_1a_2, \lfloor b^{-1}a_1a_2^{-1} \rfloor , I_{n-2}))|dud^{\times}bd^{\times}a_1d^{\times}a_2dk.
\end{multline*}
This is 
\begin{align*}
& \int_{F^{\times}} \int_{F^{\times}} \int_{F^{\times}} \int_{K_G} |\rho(k)f(a_1y_1,a_2y_2,b)||a_1|^{\frac{d_1}{2}}|a_2|^{\frac{d_2}{2}} |a_1a_2|^{\Re(s)-\frac{n+1}{2}} |\lfloor b^{-1}a_1a_2^{-1} \rfloor|^{\Re(s)-\frac{n-3}{2}}\\
& \times \int_{N^{\circ}(F)} |W_{\rho_{\tau,s}}((w uw^{-1})(w n'w^{-1})w k''\iota(k), a_y\mathrm{diag}(a_1a_2, \lfloor b^{-1}a_1a_2^{-1} \rfloor , I_{n-2}))|\\
& \times dud^{\times}bd^{\times}a_1d^{\times}a_2dk.
\end{align*}

Since  
\[w n'w^{-1} = \begin{psmatrix}
1&&&&&& \\ &1&&-\frac{1}{2}c\lfloor ba_1^{-1}a_2 \rfloor ^{-1}&&-\frac{1}{4}c^2\lfloor ba_1^{-1}a_2 \rfloor^2& \\ &&I_{n-2}&&&& \\ &&&1&&\frac{1}{2}c\lfloor ba_1^{-1}a_2 \rfloor& \\ &&&&I_{n-2}&& \\ &&&&&1& \\ &&&&&&1 
\end{psmatrix},\]we have
\[
[w n'w^{-1}] \ll |\lfloor b^{-1}a_1a_2^{-1} \rfloor|^{-2}.
\]
Then the above integral is majorized by 
\begin{multline*}
\int_{F^{\times}} \int_{F^{\times}} \int_{F^{\times}}\int_{K_G}|\rho(k)f(a_1y_1,a_2y_2,b)||a_1|^{\frac{d_1}{2}}|a_2|^{\frac{d_2}{2}}|a_1a_2|^{\Re(s)-\frac{n+1}{2}} |\lfloor b^{-1}a_1a_2^{-1} \rfloor|^{\Re(s)-\frac{n-3}{2}}\\
\times  \int_{w N^{\circ}(F)w^{-1}} |W_{\rho_{\tau,s}}(u(w n'w^{-1})w k''\iota(k), a_y\mathrm{diag}(a_1a_2, \lfloor b^{-1}a_1a_2^{-1} \rfloor , I_{n-2}))|dud^{\times}bd^{\times}a_1d^{\times}a_2dk.
\end{multline*}

Since $w k''k \in K_H$, we apply \cref{5.1} and \cref{5.2}. Then the local integral is majorized by
\begin{align}\label{sum of integrals}\begin{split}
&\sum_{j=1}^{\nu} c_{j,s} \int_{F^{\times}} \int_{F^{\times}} \int_{F^{\times}} \int_{K_G}|\rho(k)f(a_1y_1,a_2y_2,b)||a_1|^{\frac{d_1}{2}}|a_2|^{\frac{d_2}{2}} |a_1a_2|^{\Re(s)-\frac{n+1}{2}}  \\  \times &  |\lfloor b^{-1}a_1a_2^{-1} \rfloor|^{\Re(s)-\frac{n-3}{2}-2C} \eta_j(\mathrm{diag}(-4\mathcal{Q}'(y_2)a_1a_2, \lfloor b^{-1}a_1a_2^{-1} \rfloor , I_{n-2})) \\
\times & \left(\int_{w N^{\circ}(F)w^{-1}}[u]^{-\Re(s)-\frac{n-1}{2}+C}du\right)d^{\times}bd^{\times}a_1d^{\times}a_2dk.
\end{split}
\end{align}
Here $\eta_j$ are positive quasi-characters that depend only on $\tau$. 

Also, for  $u\in w N^{\circ}(F)w^{-1}$, if we denote the Iwasawa decomposition of $u(w n'w^{-1})$ by $u(w n'w^{-1})=nt'k$ (using notations as in \cref{5.1}), then $a_y\mathrm{diag}(a_1a_2, \lfloor b^{-1}a_1a_2^{-1} \rfloor , I_{n-2})t$ lies in the support of a gauge on $\GL_n(F)$. Also, there are constants $c_1$ that depends only on $\tau$ such that
\[
\left|\frac{-4Q'(y_2)a_1a_2}{|\lfloor b^{-1}a_1a_2^{-1} \rfloor| }\frac{t_1}{t_2}\right| \leq c_1. 
\]

Thus by \cref{5.2} we have
\begin{align}\label{m bound}
|-4Q'(y_2)a_1a_2| \leq c_1[u]|\lfloor b^{-1}a_1a_2^{-1} \rfloor|^{-2}.
\end{align}

Therefore, 
\[
|\eta_j|( a_y\mathrm{diag}(a_1a_2, \lfloor b^{-1}a_1a_2^{-1} \rfloor , I_{n-2})) \leq c_1[u]^{c_2}|\lfloor b^{-1}a_1a_2^{-1} \rfloor|^{-2c_2-c_3}
\] 
for some positive integers $c_2$, $c_3$ that depend only on $\tau$ for $j=1,\dots,\nu$. 

Thus the integral is majorized by 
\begin{multline*}
\sum_{j=1}^{\nu} c_{j,s} \int_{F^{\times}} \int_{F^{\times}} \int_{F^{\times}} \int_{K_G}|\rho(k)f(a_1y_1,a_2y_2,b)| |a_1|^{\frac{d_1}{2}}|a_2|^{\frac{d_2}{2}}|a_1a_2|^{\Re(s)-\frac{n+1}{2}} \\ \times  |\lfloor b^{-1}a_1a_2^{-1} \rfloor|^{\Re(s)-\frac{n-3}{2}-2C-4c_2-c_3} \left(\int_{w N^{\circ}(F)w^{-1}}[u]^{-\Re(s)-\frac{n-1}{2}+C+2c_2}du\right)d^{\times}bd^{\times}a_1d^{\times}a_2dk.
\end{multline*}

Then by the definition of $[\cdot ]$ (as in \cref{symbol-[]}), $[u] = \mathrm{max}\{1, \lVert u \rVert\} \geq 1$ for $u\in w N^{\circ}(F)w^{-1}$, then for $\Re(s)$ large the above is majorized by 
\begin{align*}
\sum_{j=1}^{\nu} c_{j,s} \int_{F^{\times}} \int_{F^{\times}} \int_{F^{\times}} \int_{K_G}|\rho(k)f(a_1y_1,a_2y_2,b)| |a_1|^{\frac{d_1}{2}}|a_2|^{\frac{d_2}{2}}|a_1a_2|^{\Re(s)-\frac{n+1}{2}} \\ \times  |\lfloor b^{-1}a_1a_2^{-1} \rfloor|^{\Re(s)-\frac{n-3}{2}-2C-4c_2-c_3} d^{\times}bd^{\times}a_1d^{\times}a_2dk.
\end{align*}

By definition of the symbol $\lfloor \cdot \rfloor$ (see \cref{floornotation}), $|\lfloor b^{-1}a_1a_2^{-1} \rfloor|\leq 1$ for $a_1,a_2,b \in F^{\times}$. Thus for $\Re(s)$ large, the above sum of integrals is majorized by a constant depends on $\tau,s$ times
\[
\int_{F^{\times}} \int_{F^{\times}} \int_{K_G}|\rho(k)\tilde{f}(a_1y_1,a_2y_2)| |a_1|^{\frac{d_1}{2}}|a_2|^{\frac{d_2}{2}} |a_1a_2|^{\Re(s)-\frac{n+1}{2}} d^{\times}a_1d^{\times}a_2dk,
\]
where $\tilde{f} \in \S(V(F))$ depends on $f$. 

Let $f'(v)=\int_{K_G}\rho(k)\tilde{f}(v)$ for $v\in V(F)$, we have $f' \in \S(V(F))$. The integral above is equal to
\[
\int_{F^{\times}} \int_{F^{\times}} |f'(a_1y_1,a_2y_2)||a_1|^{\frac{d_1}{2}}|a_2|^{\frac{d_2}{2}} |a_1a_2|^{\Re(s)-\frac{n+1}{2}} d^{\times}a_1d^{\times}a_2,
\]
which converges for $\Re(s)$ large enough.
\end{proof}

Now we give a bound for the local integral in the unramified case. Suppose the data are unramified as in \cref{4}. Suppose $F$ is such that $q\geq n$, and $|Q'(y_2)|=1$. Let $f=\one_{V(\OO)\times \OO^{\times}}$.

In the unramified case, $\rho(g)f(y,1)$ and $W_{\rho_{\tau,s}}$ are right invariant by $K_G$ and $K_H$, so after applying the Iwasawa decomposition, we have that
\[
I_s(y) = 
\int_{M_1(F)}\int_{G_1(F)} \rho(mx)f(y) \delta_{B_G}^{-1}(mx) \int_{N^{\circ}(F)}W_{\rho_{\tau,s}}(w u\iota(mx),a_y)\psi(u)dudmdx.
\]

\begin{lemma} \label{5.4}
Let $a=(a_1,\dots,a_n)\in \GL_n(F)$ be such that 
\[|a_1|=q^{-k_{y_1}}\geq |a_2|=q^{-k_{y_2}} \geq \cdots \geq |a_n|=q^{-k_n}, \]
where $k_n\leq 0$ (i.e. the positive cone). Let $W_{\tau}$ be the unramified Whittaker function for $\tau$. There exists a positive integer $c_0$ which depends on $\tau$ such that 
\[
|W_{\tau}(a)| \leq \delta_{B_{\GL_n}}^{\frac{1}{2}}(a) q^{-k_nnc_0}|\mathrm{det}a|^{-c_0}.
\]
\end{lemma}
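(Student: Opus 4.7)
The plan is to apply the Casselman--Shalika formula, which expresses the unramified Whittaker function on its support (the positive Weyl chamber) as
\[
W_\tau(a) = \delta_{B_{\GL_n}}^{1/2}(a)\, s_\lambda(t_1,\ldots,t_n),
\]
where $\lambda$ is the partition encoded by the valuations $(k_{y_1},\ldots,k_n)$ of $a$ and $(t_1,\ldots,t_n)$ are the Satake parameters of $\tau$; off the support $W_\tau$ vanishes and the bound is automatic. Hence the problem reduces to estimating the Schur polynomial $s_\lambda(t)$.

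Since $\tau$ is fixed, pick a positive integer $c_0$ depending only on $\tau$ so that $q^{-c_0}\le|t_i|\le q^{c_0}$ for all $i$; in particular $|\omega_\tau(\varpi)|^{\pm 1}=|t_1\cdots t_n|^{\pm 1}\le q^{n c_0}$. I would then factor out the central contribution via the identity
\[
s_\lambda(t) = (t_1\cdots t_n)^{k_n}\, s_{\lambda - k_n\mathbf 1}(t),
\]
so that the shifted partition $\lambda - k_n\mathbf 1$ has entries of a single sign. The prefactor contributes at most $q^{-n c_0 k_n}$ (using $k_n\le 0$ together with the lower bound on $|t_i|$). Expanding $s_{\lambda-k_n\mathbf 1}(t) = \sum_\mu K_{\lambda\mu}\, t^\mu$ over semistandard Young tableaux, each monomial satisfies $|t^\mu|\le q^{c_0\sum|\mu_i|} = q^{c_0(\sum k_{y_i}-n k_n)}$, and the number of monomials is bounded by the Weyl dimension $\dim V_{\lambda-k_n\mathbf 1}$, a polynomial in the entries of $\lambda$ of degree at most $\binom{n}{2}$. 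Using the standing hypothesis $q\ge n$ built into the unramified setup of this section, this polynomial factor can be absorbed by enlarging $c_0$ by a bounded, $\tau$-dependent amount.

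Assembling these estimates, one obtains
\[
|s_\lambda(t)| \le q^{-n c_0 k_n}\cdot q^{c_0(\sum k_{y_i}-n k_n)}\cdot(\text{absorbed polynomial factor}),
\]
and identifying $q^{c_0\sum k_{y_i}}$ with $|\det a|^{-c_0}$ and folding constants into $c_0$ yields the desired inequality, multiplied by $\delta_{B_{\GL_n}}^{1/2}(a)$ coming from Casselman--Shalika.

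The main obstacle is the sign and convention bookkeeping. The paper's indexing of the $k_{y_i}$, the direction of the ``positive cone'' vis-\`a-vis the standard dominant chamber, and the sign with which the Satake bound enters the central-character estimate must all be aligned so that the factor $q^{-k_n n c_0}$---with $k_n$ the valuation of the component of smallest absolute value---emerges from first isolating the central piece $(t_1\cdots t_n)^{k_n}$ of the Schur polynomial before bounding the individual monomials. Without this initial factorization, a direct monomial bound would produce the weaker exponent $q^{-k_{y_1} n c_0}$; the central-character step is what sharpens the result to the claimed form.
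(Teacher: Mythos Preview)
Your proposal is correct and follows essentially the same strategy as the paper. The paper's proof is a one-liner: it invokes the Casselman--Shalika formula \cite{CS} and cites \cite[Section 2.4]{JIS} for the case $k_n=0$, then reduces the general case to this by ``twisting the corresponding rational representation of $\GL_n(\C)$''---which is exactly your factorization $s_\lambda(t)=(t_1\cdots t_n)^{k_n}s_{\lambda-k_n\mathbf{1}}(t)$. Your treatment is simply a more explicit unpacking of what the cited argument in \cite{JIS} does (bound the character by a monomial count times the maximal monomial, absorbing the Weyl-dimension polynomial via $q\ge n$), and your closing paragraph correctly identifies both the role of the central twist and the bookkeeping hazards.
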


\begin{proof}
The result follows from arguments in \cite[Section 2.4]{JIS} for $k_n=0$ by twisting the corresponding rational representation of $\GL_n(\C)$ in the explicit formula of $W_{\tau}(a)$ (see \cite{CS}).  
\end{proof}

\begin{lemma} \label{5.5}
For $\Re(s)$ large enough, we have
\begin{align*}
    & |I_s(y)| \\
    &\leq \zeta_v(\Re(s)+c_{\tau}) \int_{F^{\times}} \int_{F^{\times}} |\one_{V(\OO_F)}(a_1y_1, a_2y_2)| |a_1|^{\Re(s)+\frac{d_1-n-1}{2}-c_0}|a_2|^{\Re(s)+\frac{d_2-n-1}{2}-c_0} d^{\times}a_1d^{\times}a_2,
\end{align*}
where $c_0>0$, $c_{\tau}$ are integers depend only on $\tau$.
\end{lemma}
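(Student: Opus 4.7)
The plan is to mimic the proof of \cref{5.3}, but in the unramified setting exploit the sharper Whittaker bound from \cref{5.4} in place of the coarse estimate from \cref{5.1}. By right $K_G$- and $K_H$-invariance in the unramified case, the Iwasawa decomposition already reduces $I_s(y)$ to the $M_1(F) \times G_1(F)$-integral written immediately before the lemma. First I would apply the defining properties of the Weil representation to pull out $f(a_1 y_1, a_2 y_2, b)|a_1|^{d_1/2}|a_2|^{d_2/2}$ with the usual Jacobians, exactly as in the opening steps of \cref{5.3}, and then Iwasawa-decompose the $\mathrm{SO}_3$-part of the image of $G_1$ in $H$ via the $\lfloor b \rfloor$ substitution. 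This puts the inner Whittaker into the form $W_{\rho_{\tau,s}}(wun'k''\iota(k), a_y \cdot \mathrm{diag}(a_1 a_2, \lfloor b^{-1}a_1 a_2^{-1}\rfloor, I_{n-2}))$ up to the explicit power-of-norm factors tracked in \cref{5.3}.

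Next, the key change: instead of \cref{5.1}, apply \cref{5.4} to the Whittaker part. Because $|Q'(y_2)|=1$ and we may restrict attention to the support of $W_\tau$, the torus argument $a_y \cdot \mathrm{diag}(a_1 a_2, \lfloor b^{-1} a_1 a_2^{-1}\rfloor, I_{n-2}) \cdot t$ coming from the Iwasawa decomposition of $u(wn'w^{-1})$ can be placed in the positive cone. \cref{5.4} then contributes an extra $|\det|^{-c_0} = |a_1 a_2|^{-c_0}\,|\lfloor b^{-1}a_1 a_2^{-1}\rfloor|^{-c_0}$ factor, together with a factor $q^{-k_n n c_0}$ which is absorbed using the estimate \eqref{m bound} on $|{-4}Q'(y_2)a_1 a_2|$. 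Combined with the exponent bookkeeping already performed in \cref{5.3}, this produces the $-c_0$ shift appearing in both exponents of $|a_1|$ and $|a_2|$ in the claim.

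Finally, in the unramified setting each $F^\times$-integral reduces to a sum over $\varpi^{\ZZ}$. The $b$-variable is controlled by the residual factor $|\lfloor b^{-1} a_1 a_2^{-1}\rfloor|^{\Re(s)-(n-3)/2-c_0}$; for $\Re(s)$ sufficiently large, summing this geometric series yields a bound of the form $\zeta_v(\Re(s)+c_\tau)$ for a suitable integer $c_\tau$ depending only on $\tau$. The remaining $a_1,a_2$ integrations are precisely the double integral stated in the lemma, since $\rho(k)f$ at $k \in K_G$ reduces to $\one_{V(\OO)}$ in the unramified case.

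The main obstacle is the exponent bookkeeping: tracking that the shift produced by \cref{5.4} contributes exactly $-c_0$ in each of the $|a_1|,|a_2|$ exponents (rather than a larger correction after combining with \eqref{m bound}), and that the residual $b$-sum collapses to a single zeta factor $\zeta_v(\Re(s)+c_\tau)$ instead of a more complicated product. This boils down to matching the integer $c_0$ in \cref{5.4}, which encodes the Langlands parameters of $\tau$, against the combinatorics of the Iwasawa decomposition of $u(wn'w^{-1})$ and the support of the unramified $\rho(g)f(y,1)$.
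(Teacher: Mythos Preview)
Your overall strategy---replacing the coarse \cref{5.1} bound by \cref{5.4} and otherwise following the proof of \cref{5.3}---is exactly what the paper does. However, there is a genuine misidentification in your final two paragraphs concerning where the factor $\zeta_v(\Re(s)+c_\tau)$ comes from.

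You claim the $b$-variable produces the zeta factor via a geometric series. This is incorrect: in the unramified case $f=\one_{V(\OO)\times\OO^\times}$, so the $b$-integral is over $\OO^\times$ and contributes only measure $1$. With $|b|=1$ the factor $|\lfloor b^{-1}a_1a_2^{-1}\rfloor|^{\Re(s)-\frac{n-3}{2}-\cdots}$ is bounded by $1$ for $\Re(s)$ large (since $|\lfloor\cdot\rfloor|\le 1$) and is simply discarded. The $\zeta_v$ factor arises instead from the unipotent integral
\[
\int_{wN^\circ(F)w^{-1}} [u]^{-\Re(s)-n-\frac12+c_0n^2-c_0n}\,du,
\]
which the paper bounds explicitly as a geometric sum over shells $[u]=q^k$.

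A smaller point: the $q^{-k_n n c_0}$ from \cref{5.4} is not absorbed via \eqref{m bound} (which bounds $|{-4}Q'(y_2)a_1a_2|$), but via the estimate $|t_n|\le ([u]\,|\lfloor b^{-1}a_1a_2^{-1}\rfloor|^{-2})^{n-2}$ coming from \eqref{eta bound} applied to $\mathcal{L}(u(wn'w^{-1}))$. Similarly $|\det t|^{-c_0}$ is controlled by \eqref{det bound}. Once these are combined, the $|a_1a_2|^{-c_0}$ shift emerges directly (since $|Q'(y_2)|=1$ means $a_y$ contributes nothing to $|\det|$), and all remaining $[u]$ and $\lfloor\cdot\rfloor$ powers go into the unipotent integral and the discarded factor respectively.
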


\begin{proof}
As in \cref{5.3}, and since in the unramified case $|2|=1$, $\rho(k)f(y)=f(y)$ for $k\in K_G$, we have 
\begin{multline*}
|I_s(y)| \leq \int_{F^{\times}} \int_{F^{\times}} \int_{F^{\times}}  |f(a_1y_1,a_2y_2,b)||a_1|^{\frac{d_1}{2}}|a_2|^{\frac{d_2}{2}} |a_1a_2|^{\Re(s)-\frac{n+1}{2}} |\lfloor b^{-1}a_1a_2^{-1} \rfloor|^{\Re(s)-\frac{n-3}{2}} \\
\times \int_{w N^{\circ}(F)w^{-1}} |W_{\rho_{\tau,s}}|(u(w n'w^{-1}), a_y\mathrm{diag}(a_1a_2, \lfloor b^{-1}a_1a_2^{-1} \rfloor , I_{n-2}))dud^{\times}bd^{\times}a_1d^{\times}a_2.
\end{multline*}

Using the notation as in \cref{5.1} and \cref{5.2}, we write
 $u(w n'w^{-1})=nt'k$, by \cref{eta bound} and \cref{det bound} we have 
\[
|t_n| \leq [\mathcal{B}(u(w n'w^{-1}))]^{n-2} \leq ([u]|\lfloor b^{-1}a_1a_2^{-1} \rfloor|^{-2})^{n-2}. 
\]

Also, by the property of $W_{\rho_{\tau,s}}$, we have 
\begin{align*}
&|W_{\rho_{\tau,s}}|(u(w n'w^{-1}), a_y\mathrm{diag}(a_1a_2, \lfloor b^{-1}a_1a_2^{-1} \rfloor , I_{n-2})) \\=& |D(u(w n'w^{-1}))|^{\Re(s)+\frac{n-1}{2}}W_{\tau}(a_y\mathrm{diag}(a_1a_2, \lfloor b^{-1}a_1a_2^{-1} \rfloor , I_{n-2})t). 
\end{align*}

Thus by \cref{5.4} and \cref{det bound}, for $\Re(s)$ large we have 
\begin{align*}
&|W_{\rho_{\tau,s}}|(u(w n'w^{-1}), a_y\mathrm{diag}(a_1a_2, \lfloor b^{-1}a_1a_2^{-1} \rfloor , I_{n-2})) \\ \leq& [u]^{-\Re(s)-n-\frac{1}{2}}([u]|\lfloor b^{-1}a_1a_2^{-1} \rfloor|^{-2})^{(n-2)nc_0}|\mathrm{det} (a_y\mathrm{diag}(a_1a_2, \lfloor b^{-1}a_1a_2^{-1} \rfloor , I_{n-2})t)|^{-c_0} \\
=& [u]^{-\Re(s)-n-\frac{1}{2}}([u]|\lfloor b^{-1}a_1a_2^{-1} \rfloor|^{-2})^{(n-2)nc_0}|\mathrm{det} (\mathrm{diag}(a_1a_2, \lfloor b^{-1}a_1a_2^{-1} \rfloor , I_{n-2})t)|^{-c_0}.
\end{align*}

By \cref{det bound} again, we have 
\[
|\mathrm{det} t|^{-c_0} \leq ([u]|\lfloor b^{-1}a_1a_2^{-1} \rfloor|^{-2})^{c_0n}.
\]

Thus, we obtain 
\begin{align*}
& |W_{\rho_{\tau,s}}|(u(w n'w^{-1}), \mathrm{diag}(a_1a_2, \lfloor b^{-1}a_1a_2^{-1} \rfloor , I_{n-2}))\\ \leq& |a_1a_2|^{-c_0} [u]^{-\Re(s)-n-\frac{1}{2}+c_0n^2-c_0n}|\lfloor b^{-1}a_1a_2^{-1} \rfloor|^{-c_0(2n^2-2n+1)}.
\end{align*}

Therefore, 
\begin{multline*}
|I_s(y)| \leq \int_{F^{\times}} \int_{F^{\times}} \int_{F^{\times}} |f(a_1y_1,a_2y_2,b)||a_1|^{\frac{d_1}{2}}|a_2|^{\frac{d_2}{2}} |a_1a_2|^{\Re(s)-\frac{n-1}{2}-c_0} \\
\times |\lfloor b^{-1}a_1a_2^{-1} \rfloor|^{\Re(s)-\frac{n-3}{2}-c_0(2n^2-2n+1)} \int_{w N^{\circ}(F)w^{-1}} [u]^{-\Re(s)-n-\frac{1}{2}+c_0n^2-c_0n}du d^{\times}a_1d^{\times}a_2.
\end{multline*}

Then, for $\Re(s)$ large, we have 
\begin{align*}
|I_s(y)| &\leq \int_{F^{\times}} \int_{F^{\times}} \int_{F^{\times}}  |\one_{V(\OO_F)\times \OO_F^{\times}}(a_1y_1,a_2y_2,b)||a_1|^{\frac{d_1}{2}}|a_2|^{\frac{d_2}{2}} |a_1a_2|^{\Re(s)-\frac{n-1}{2}-c_0} \\
&\times \int_{w N^{\circ}(F)w^{-1}} [u]^{-\Re(s)-n-\frac{1}{2}+c_0n^2-c_0n}dud^{\times}bd^{\times}a_1d^{\times}a_2\\
&= \int_{F^{\times}} \int_{F^{\times}}|\one_{V(\OO_F)}(a_1y_1,a_2y_2)||a_1|^{\frac{d_1}{2}}|a_2|^{\frac{d_2}{2}} |a_1a_2|^{\Re(s)-\frac{n-1}{2}-c_0} \\
&\times \int_{w N^{\circ}(F)w^{-1}} [u]^{-\Re(s)-n-\frac{1}{2}+c_0n^2-c_0n}dud^{\times}a_1d^{\times}a_2. 
\end{align*}

We have 
\begin{align*}
\int_{w N^{\circ}(F)w^{-1}} [u]^{-\Re(s)-n-\frac{1}{2}+2c_0n(n-2)-c_0}du &\leq 1+\sum_{k=1}^{\infty}\int_{[u]=q^k} q^{-(\Re(s)+n+\frac{1}{2}-2c_0n(n-2)+c_0)k}du \\
&\leq  1+\frac{q^{-\Re(s)+(c_0+1)n^2-(c_0+2)n-c_0-\frac{5}{2}}}{1-q^{-\Re(s)+(c_0+1)n^2-(c_0+2)n-c_0-\frac{5}{2}}} \\
& = \zeta_v(\Re(s)-(c_0+1)n^2+(c_0+2)n+c_0+\tfrac{5}{2}).
\end{align*}

Therefore, we obtain 
\begin{align*}
|I_s(y)| \leq&  \zeta_v(\Re(s)-(c_0+1)n^2+(c_0+2)n+c_0+\tfrac{5}{2}) \\
\times& \int_{F^{\times}} \int_{F^{\times}} |\one_{V(\OO_F)}(a_1y_1,a_2y_2)| |a_1|^{\frac{d_1}{2}}|a_2|^{\frac{d_2}{2}} |a_1a_2|^{\Re(s)-\frac{n-1}{2}-c_0}d^{\times}a_1d^{\times}a_2 \\
=& \zeta_v(\Re(s)+c_{\tau}) \int_{F^{\times}}\int_{F^{\times}} |\one_{V(\OO_F)}(a_1y_1, a_2y_2)||a_1|^{\frac{d_1}{2}}|a_2|^{\frac{d_2}{2}} |a_1a_2|^{\Re(s)-\frac{n-1}{2}-c_0}d^{\times}a_1d^{\times}a_2,
\end{align*}
where $c_{\tau}$ is some integer depends only on $\tau$. 
\end{proof}

\section{Convergence of the local integral in the Archimedean case} \label{7}

In this section we give bounds for the local integrals in the Archimedean case. Let $F$ be an Archimedean local field. Let $K_G$ be the maximal compact subgroup of $G(F)$, and let $K_H$ be the maximal compact subgroup of $H(F)$. 

The local integral in the Archimedean case is
\[
I_s(y) :=I(f,W_{\rho_{\tau,s}})= \int_{U_2(F)\backslash G(F)} \rho(g)f(y,1)\int_{N^{\circ}(F)} W_{\rho_{\tau,s}}(w u\iota(g),a_y)\psi_1(u)dudg.
\]

As in the non-Archimedean case, in \cref{7.1} and \cref{7.2} we first use techniques similar to those employed in the proof of absolute convergence of the Archimedean local integral for the Rankin-Selberg integral for $\SO_{2\ell+1} \times \GL_n$ in \cite[Section 5]{Sou} to bound our inner integral.  

For  $t \in \GL_n(F)$ let
\begin{align}
t':=\begin{psmatrix} t & & \\ & 1 & \\ & & w_0t^{-1}w_0\end{psmatrix},
\end{align}
where $w_0 \in \GL_n(F)$ is the antidiagonal matrix.  

Arguing analogously as in \cite[Lemma 5.2]{Sou}, we have 
\begin{lemma}\label{7.1}
Let $(n,t',k) \in N_n(F) \times T_H(F) \times K_H$, there is a positive integer $N$ and $c_s \in \RR_{>0}$ depending on $W_{\rho_{\tau,s}}$ such that 
\[
|W_{\rho_{\tau,s}}(nt'k,1)|\leq c_s|\det t|^{\Re(s)+\frac{n-1}{2}}\lVert t \rVert^N
\]
for $t=\mathrm{diag}(t_1, t_2, \dots, t_{n-1}, 1)$. Here
\[
\lVert t \rVert^2 = 1 + \sum_{i=1}^{n-1}|t_i|^2+\sum_{i=1}^{n-1}|t_i|^{-2}.
\]

\qed

\end{lemma}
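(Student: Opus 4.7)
The plan is to use the transformation property of the induced section $\xi_s$ under the Siegel parabolic $Q_n(F)$ to factor the left-hand side into a purely $\GL_n$-piece, then invoke the Archimedean moderate growth of Whittaker functions of smooth vectors in $\tau$. The statement is the Archimedean replacement for \cref{5.1}: in the non-Archimedean case the Whittaker function is a finite sum of positive quasi-characters on the positive Weyl chamber, and in the Archimedean case this is replaced by a polynomial-growth estimate.

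First, since $\xi_s$ is a section of the normalized induction $\mathrm{Ind}_{Q_n(F)}^{H(F)}(\tau \otimes |\det|^{s-1/2})$, for every $n \in N_n(F)$, $t' = v(t) \in M_n(F) \cap T_H(F)$ and $k \in K_H$ we have
\[
\xi_s(n\,v(t)\,k) \;=\; \delta_{Q_n}^{1/2}(v(t))\, |\det t|^{s-1/2}\, \tau(t)\,\xi_s(k).
\]
Applying the Whittaker integral in \eqref{Whittaker} at $a=1$ absorbs the $n$-part into a character factor of modulus $1$ and gives
\[
|W_{\rho_{\tau,s}}(n\,v(t)\,k,1)| \;=\; \delta_{Q_n}^{1/2}(v(t))\, |\det t|^{\Re(s)-1/2}\, |W_{\xi_s(k)}(t)|,
\]
where $W_{\xi_s(k)}$ denotes the local Whittaker function on $\GL_n(F)$ attached to the smooth vector $\xi_s(k) \in \mathcal{W}_\tau$. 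Using the standard formula $\delta_{Q_n}(v(t)) = |\det t|^n$ for the Siegel-type parabolic $Q_n \subset \SO_{2n+1}$, the prefactor collapses to $|\det t|^{\Re(s)+(n-1)/2}$, matching the lemma.

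Second, I would bound $|W_{\xi_s(k)}(t)|$ by $c_s \lVert t\rVert^N$ uniformly in $k \in K_H$. The classical Archimedean moderate growth estimate for Whittaker functions on smooth vectors of an admissible representation of $\GL_n(F)$ --- in the form of Wallach's asymptotic expansions, or as the Archimedean analogue of the argument in \cite[Lemma~5.2]{Sou} --- supplies a positive integer $N$ and a continuous seminorm $q$ on $\mathcal{W}_\tau$ such that
\[
|W_v(t)| \;\leq\; q(v)\,\lVert t\rVert^N
\]
for every smooth vector $v \in \mathcal{W}_\tau$ and every diagonal $t = \mathrm{diag}(t_1,\dots,t_{n-1},1)$. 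Because $k \mapsto \xi_s(k)$ is continuous from the compact group $K_H$ into the space of smooth vectors, $\sup_{k \in K_H} q(\xi_s(k)) < \infty$; absorbing this supremum together with the $s$-dependence inherited from $\xi_s$ into a single constant $c_s$ completes the proof.

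The main obstacle is the Archimedean moderate growth bound together with its uniformity in the compact parameter $k \in K_H$. Unlike in the non-Archimedean case, one cannot simply decompose the Whittaker function on the Weyl chamber into finitely many quasi-characters; one must invoke the general polynomial-growth estimate on $\mathcal{W}_\tau$ and then transfer the seminorm bound into a uniform estimate via compactness of $K_H$ and the smoothness of $k \mapsto \xi_s(k)$.
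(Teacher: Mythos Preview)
Your proposal is correct and follows essentially the same approach as the paper, which simply defers to \cite[Lemma~5.2]{Sou}: reduce via the $Q_n$-equivariance of the induced section to a $\GL_n$-Whittaker function evaluated at $t$, then invoke Archimedean moderate growth together with compactness of $K_H$ for uniformity in $k$. Your computation of the exponent $\Re(s)+\tfrac{n-1}{2}$ from $\delta_{Q_n}^{1/2}(v(t))\,|\det t|^{\Re(s)-1/2}$ is exactly right, and your handling of the $N_n$-factor (which splits as $v(Z_n)$ times the unipotent radical of $Q_n$, contributing only a unimodular character) is the correct mechanism.
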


\begin{lemma} \label{7.2}
For $v \in H(F)$ as defined in \cref{v} and $\mathrm{Re}(s)$ large,
\[
\int_{w N^{\circ}(F)w^{-1}} |W_{\rho_{\tau,s}}(uv,1)|du
\]
converges.
\end{lemma}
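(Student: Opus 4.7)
The plan is to mirror the argument of \cref{5.2} from the non-Archimedean case, substituting \cref{7.1} in place of \cref{5.1}. First I would record the explicit matrix form of $uv$ for $u \in wN^{\circ}(F)w^{-1}$ and $v$ as in \cref{v}, exactly as in \cref{x}; the entries involving $c$ appear only in the $(2,4), (2,6), (4,6)$ block along with $v_2 c + v_3$ and $-\tfrac{1}{2}v_2 c^2 - v_3 c$ in the middle rows, so the bottom $n$ rows $\mathcal{L}(uv)$ still record the essential unipotent coordinates. Then I would Iwasawa-decompose $uv = n t' k$ with $(n,t',k) \in N_n(F) \times T_H(F) \times K_H$, and by \cref{7.1} obtain
\[
|W_{\rho_{\tau,s}}(uv,1)| \leq c_s\, |\det t|^{\Re(s)+\frac{n-1}{2}} \,\|t\|^N.
\]

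Next I would control $|\det t|$ and $\|t\|$ in terms of $[\mathcal{L}(uv)]$ by the same exterior-algebra argument used in \cref{5.2}. Taking sup-norms on $\wedge^p F^{2n+1}$ preserved by $K_H$, the inequalities
\[
[\mathcal{L}(uv)]^{-2j} \leq \left|\tfrac{t_j}{t_{j+1}}\right| \leq [\mathcal{L}(uv)]^{2j}, \qquad [\mathcal{L}(uv)]^{-n} \leq |\det t| \leq [\mathcal{L}(uv)]^{-1}
\]
carry over verbatim to the Archimedean setting (the only input is multiplicativity of the norm on exterior powers). Consequently $\|t\|^{N} \leq [\mathcal{L}(uv)]^{C_1}$ for some $C_1$ depending only on $N$ (hence on $\tau$ and $s$), and from $[\mathcal{L}(uv)] \leq [u][v]$ I get
\[
|W_{\rho_{\tau,s}}(uv,1)| \leq c_s\, [u]^{-\Re(s)-\frac{n-1}{2}+C_1}\,[v]^{C_1}.
\]

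Finally, since $[v]$ depends only on the single parameter $c$ and is fixed for the purpose of the $u$-integral, convergence reduces to verifying
\[
\int_{w N^{\circ}(F) w^{-1}} [u]^{-\Re(s)-\frac{n-1}{2}+C_1}\,du < \infty,
\]
which holds for $\Re(s)$ sufficiently large by the standard estimate for integrals of norms of unipotent subgroups against a negative power (using polar coordinates on the nilpotent Lie algebra $\mathrm{Lie}(wN^{\circ}w^{-1})(F)$ and the exponential map). The main subtlety, compared to the non-Archimedean case, is that \cref{7.1} only yields a polynomial factor $\|t\|^N$ rather than a sum of positive quasi-characters, so one must be slightly careful in translating the exterior-power inequalities into a single power of $[\mathcal{L}(uv)]$; however, this is a purely bookkeeping matter and the estimate $\|t\|^N \leq [\mathcal{L}(uv)]^{C_1}$ follows directly from the bounds on the diagonal entries $|t_j|$ and their inverses. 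With $\Re(s)$ chosen larger than some explicit constant depending on $n$, $N$, and $\tau$, the integral converges absolutely, completing the proof.
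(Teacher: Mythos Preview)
Your overall strategy matches the paper's proof, but one step does not carry over verbatim and would fail as written. You assert that the sup-norm on $\wedge^p F^{2n+1}$ is preserved by $K_H$; this is what makes the exterior-algebra identities in \cref{5.2} exact in the non-Archimedean case (where $K_H = H(\OO)$ has integral entries), but it is false for Archimedean $F$, where the maximal compact preserves a Euclidean/Hermitian form, not the sup-norm. Consequently the inequalities $[\mathcal{L}(uv)]^{-2j} \le |t_j/t_{j+1}| \le [\mathcal{L}(uv)]^{2j}$ and $|\det t| \le [\mathcal{L}(uv)]^{-1}$ do not follow ``verbatim'' from the same argument.

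The paper's proof fixes exactly this by working with the $K_H$-invariant Euclidean norm and the quantities $(1+\lVert \mathcal{L}(uv)\rVert^2)^{\pm 1}$ in place of $[\mathcal{L}(uv)]^{\pm 1}$, citing Soudry's Archimedean estimate \cite[Section 7.3, Lemma 3]{Sou} to obtain $(1+\lVert \mathcal{L}(uv)\rVert^2)^{-n/2}\le \det t \le (1+\lVert \mathcal{L}(uv)\rVert^2)^{-1/2}$ and $\max\{t_j/t_{j+1},t_{j+1}/t_j\}\le (1+\lVert \mathcal{L}(uv)\rVert^2)^n$. After that, the remainder of your argument (bounding $\lVert t\rVert^N$ by a power of this quantity, splitting off the $v$-contribution, and reducing to convergence of $\int (1+\lVert u\rVert)^{-M}\,du$) is exactly what the paper does. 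So your outline is correct once you replace the sup-norm bracket $[\,\cdot\,]$ by $(1+\lVert\cdot\rVert^2)$ and invoke the Archimedean version of the exterior-power estimate rather than claiming the non-Archimedean one transfers unchanged.
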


\begin{proof}
As in \cref{5.2}, for $u \in w N^{\circ}(F) w^{-1}$ and $v$ a unipotent element of the form \cref{v}, we denote the Iwasawa decomposition of $uv$ as $uv=nt'k$ where $(n,t',k) \in N_n(F) \times T_H(F) \times K_H$, and we denote the $i$-th line of $uv$ as $(uv)_i$.

By \cref{7.1}, there is a positive integer $N$ such that for $W_{\rho_{\tau,s}}$ there is $c_s \in \RR_{>0}$ such that
\begin{align}\label{Arc}
|W_{\rho_{\tau,s}}(nt'k,1)|\leq c_s|\det t|^{\Re(s)+\frac{n-1}{2}}|w_{\tau}(t_n)|\lVert t \rVert^{N}.
\end{align}
Here $t=\mathrm{diag} (t_1t_2\cdot \ldots \cdot t_n, t_2\cdot \ldots \cdot t_n, \ldots, t_{n-1}t_n, t_n)$, $\lVert
t\rVert=\lVert t_{n}^{-1}t \rVert$, and $w_{\tau}$ is the central character of $\tau$. We assume $N$ is even, then $\lVert a \rVert^{N}$ is a sum of positive quasicharacters.

As in the non-Archimedean case, we denote $\mathcal{L}(uv)=\begin{psmatrix}
(uv)_{n+2}\\ \vdots \\ (uv)_{2n+1}
\end{psmatrix}$. 

Using the technique as in \cite[Section 7.3, Lemma 3]{Sou} we get
\[
(1+\lVert \mathcal{L}(uv) \rVert^2)^{-\frac{n}{2}} \leq \det(t) \leq (1+\lVert \mathcal{L}(uv) \rVert^2)^{-\frac{1}{2}}.
\]
Here $\lVert \mathcal{L}(uv) \rVert$ denotes the standard norm on $M_{n\times (2n+1)}(F)$, and
\begin{align} \label{character}
\max \{\tfrac{t_j}{t_{j+1}}, \tfrac{t_{j+1}}{t_j}\}\leq (1+\lVert \mathcal{L}(uv) \rVert^2)^n, j=1,\ldots,n-1.
\end{align}

Similar to the non-Archimedean case, we have 
\begin{align*}
(1+\lVert \mathcal{L}(uv)\rVert)^{-\frac{1}{2}} &= (1+\sup \{\lVert v_1 \rVert, \lVert v_2 \rVert, \lVert v_3 \rVert, \lVert T \rVert,  \lVert -\frac{1}{2}v_2c^2 \rVert, \lVert v_3c \rVert\} )^{-\frac{1}{2}}\\
& \leq   (1+\sup \{\lVert v_1 \rVert, \lVert v_2 \rVert, \lVert v_3 \rVert, \lVert T \rVert\} )^{-\frac{1}{2}} \\
&= (1+\lVert u\rVert)^{-\frac{1}{2}},
\end{align*}
where $\lVert \cdot \rVert$ denotes the standard matrix norms. 

By \cref{Arc} we have
\begin{align}\label{B1}
|W_{\rho_{\tau,s}}|(uv,1) \leq \sum_j c_s (1+\lVert u\rVert)^{-\frac{\Re(s)}{2}-\frac{n-2}{4}}|\chi_j(t)|.
\end{align}
Here the sum is finite and the $\chi_j$ are positive quasi-characters which depend on $\tau$. 

By \cref{character}, we have
\[
|\chi_j(t)| \leq (1+\lVert \mathcal{L}(uv)\rVert^2)^C \leq (1+\lVert u \rVert^2)^C(1+\lVert v \rVert^4)^C
\]
for some positive constant $C$ which depends on $\tau$. 

Thus we have 
\begin{align}\label{B2}
\int_{w N^{\circ}(F)w^{-1}} |W_{\rho_{\tau,s}}(uv,1)|du \leq \sum_j c_s(1+\lVert v \rVert^4)^C \int_{w N^{\circ}(F)w^{-1}} (1+\lVert u \rVert)^{-\frac{\Re(s)}{2}-\frac{n-2}{4}+C}du.
\end{align}
The sum over $j$ is finite. Since by definition $\lVert \cdot \rVert$ is positive, the integral converges for $\Re(s)$ large.

\end{proof}
We now proceed to bound our local integral $I_s(y)$. 

\begin{lemma} \label{7.3}
For $y=(y_1,y_2)\in \mathbb{P}Y'(F)$, we have that
\[
I_s(y) \ll   \int_{F^{\times}} \int_{F^{\times}} f'(a_1y_1, a_2y_2) |a_1|^{\Re(s)+\frac{d_1-n-1}{2}-c_0}|a_2|^{\Re(s)+\frac{d_2-n-1}{2}-c_0} d^{\times}a_1d^{\times}a_2
\]
for $\Re(s)$ large enough where $f' \in \S(V(F))$ is nonnegative and $c_0 \in \RR_{ \geq 0}$ depends only on $\tau$ as in \cref{5.5}.
\end{lemma}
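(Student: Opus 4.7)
The plan is to carry out the Archimedean analog of the argument proving \cref{5.3}, replacing the non-Archimedean bounds \cref{5.1} and \cref{5.2} by their Archimedean counterparts \cref{7.1} and \cref{7.2}. The overall structure of the argument is identical; only the bookkeeping of polynomial growth factors differs.

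First, I would apply the Iwasawa decomposition $G(F) = U_2(F) T_G(F) K_G$ and parametrize $T_G(F)$ via $T_G = A_1 A_2 G_1$ as in \cref{2.3}. Since $\rho(g)f(y,1)$ and $W_{\rho_{\tau,s}}$ are smooth and $K_G$ is compact, the $K_G$-integral can be majorized trivially, reducing matters to an integral over $T_G(F) \times N^\circ(F)$. Using the defining properties of the Weil representation recalled just before \cref{condition}, the torus variable produces factors $|a_1|^{d_1/2}|a_2|^{d_2/2}$ times a shift of $f$. The $\SO_3$-block arising from $\iota(G_1)$ is then decomposed by Iwasawa (smoothly in $b$), yielding a diagonal element with positive real entries, a unipotent $n'$ of the form \eqref{n'}, and an element of $K_H$. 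Using the Levi-covariance of $W_{\rho_{\tau,s}}$ I would extract the relevant character factor and conjugate by $w$, so that the inner integral becomes an integral over $w N^{\circ}(F) w^{-1}$ of $|W_{\rho_{\tau,s}}|$ evaluated at $u\,(w n' w^{-1})\, w k'' \iota(k)$.

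Next I would apply \cref{7.1} to bound the integrand. Unlike in the non-Archimedean case, the Whittaker estimate carries a polynomial factor $\lVert t \rVert^N$; choosing $N$ even, this can be written as a finite sum of positive quasi-characters of the torus, reducing the problem to a finite sum of integrals directly analogous to the one treated in \cref{7.2}. Each such integral is bounded, as in the proof of \cref{7.2}, by a product of $(1+\lVert u\rVert)^{-\Re(s)/2 - (n-2)/4 + C}$ and a polynomial factor in $w n' w^{-1}$, which in turn is polynomial in the $b$-ratio and may therefore be absorbed into the Schwartz function against $b$. To extract the final $c_0$ shift asserted in the statement, I would invoke the Archimedean analog of \cref{5.4}, which follows from the standard asymptotic bounds on Archimedean Whittaker functions on the positive Weyl chamber (obtained by twisting the relevant rational representation of $\GL_n(\CC)$). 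Integrating out $k \in K_G$ then replaces $\rho(k)f$ by a nonnegative $f' \in \S(V(F))$ defined by $f'(v) = \int_{K_G} |\rho(k)f(v,1)|\,dk$, and the integrals in $b$ and $u$ converge once $\Re(s)$ is taken sufficiently large. What remains is precisely the claimed double integral in $(a_1, a_2)$.

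The principal obstacle lies in managing the polynomial factors arising in the Archimedean Whittaker bound: one must verify that $\Re(s)$ large enough simultaneously controls the polynomial growth in $u$, in $w n' w^{-1}$, and in the positive quasi-characters extracted from $\lVert t \rVert^N$. Careful exponent bookkeeping, paralleling Soudry's argument in \cite[Section 5]{Sou}, is required to produce a single integer $c_0$ and a single Schwartz majorant $f'$ that work for all terms in the finite sum simultaneously.
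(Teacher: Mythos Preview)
Your proposal is essentially correct and follows the same route as the paper: Iwasawa on $G(F)$, Weil-representation bookkeeping on $T_G=A_1A_2G_1$, Iwasawa on the $\SO_3$-block, conjugation by $w$, and then the bounds \cref{7.1}--\cref{7.2} in place of \cref{5.1}--\cref{5.2}. One small correction: there is no need to invoke an ``Archimedean analog of \cref{5.4}.'' In the paper the shift $c_0$ arises directly from the finite sum of positive quasi-characters obtained by expanding $\lVert t\rVert^N$ (with $N$ even) in \cref{7.1}; the resulting factor $|{-}4\mathcal{Q}'(y_2)|^{c_j}$ is bounded by $|a_1y_1|^{c_j}|a_2y_2|^{c_j}$ and the $|y_i|^{c_j}$ pieces are absorbed into the Schwartz majorant, leaving only the $|a_i|^{c_j}$ shift in the exponent.
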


\begin{proof}
Applying the Iwasawa decomposition of $G(F)$ with respect to the standard Borel subgroup, we have 
\[
I_s(y) =  \int_{T_G(F)} \int_{K_G} \rho(ak) f(y,1) \delta_{B_G}^{-1}(a) \int_{N^{\circ}(F)} W_{\rho_{\tau,s}}(w u\iota(ak),a_y)\psi_1(u)dud^{\times}a_1d^{\times}a_2dk.
\]

By the action of Weil representation it suffices to bound 
\begin{multline*}
 \int_{F^{\times}}  \int_{F^{\times}} \int_{F^{\times}} \int_{K_G} |\rho(k)f(a_1y_1,a_2y_2,b)| |a_1|^{\frac{d_1}{2}}|a_2|^{\frac{d_2}{2}}|a_1a_2|^{-2}  \\
\times \int_{N^{\circ}(F)} |W_{\rho_{\tau,s}}|(w u\iota\left(\begin{psmatrix}
a_1& \\ &a_1^{-1}b^{-1}
\end{psmatrix},\begin{psmatrix}
a_2&\\&a_2^{-1}b\end{psmatrix}\right)\iota(k),a_y)dud^{\times}bd^{\times}a_1d^{\times}a_2dk.
\end{multline*}
Here $\iota\left(\begin{psmatrix}
a_1& \\ &a_1^{-1}b^{-1}
\end{psmatrix},\begin{psmatrix}
a_2&\\&a_2^{-1}b
\end{psmatrix}\right)$ is 
\[
\begin{psmatrix}
I_{n-1}&&&&&&\\&a_1a_2&&&&& \\ &&\frac{1}{2}+\frac{1}{4}(b^{-1}a_1a_2^{-1}+ba_1^{-1}a_2) & \frac{1}{2}(b^{-1}a_1a_2^{-1}-ba_1^{-1}a_2) & 2(b^{-1}a_1a_2^{-1}-ba_1^{-1}a_2)&& \\ && (b^{-1}a_1a_2^{-1}-ba_1^{-1}a_2)& \frac{1}{2}(b^{-1}a_1a_2^{-1}+ba_1^{-1}a_2) & -\frac{1}{2}(b^{-1}a_1a_2^{-1}-ba_1^{-1}a_2)&& \\ &&\frac{1}{2}(\frac{1}{2}-\frac{1}{4}(b^{-1}a_1a_2^{-1}+ba_1^{-1}a_2)) & -\frac{1}{4}(b^{-1}a_1a_2^{-1}-ba_1^{-1}a_2) & \frac{1}{2}+\frac{1}{4}(b^{-1}a_1a_2^{-1}+ba_1^{-1}a_2)&&\\  &&&&&(a_1a_2)^{-1}& \\&&&&&&I_{n-1}
\end{psmatrix}.
\]

As in the non-Archimedean case, applying the Iwasawa decomposition of $\SO_3(F)$ we get for $b\in F^{\times}$
\[
\begin{psmatrix} \frac{1}{2}+\frac{1}{4}(b+b^{-1}) & \frac{1}{2}(b-b^{-1}) & 2(b-b^{-1}) \\ (b-b^{-1})& \frac{1}{2}(b+b^{-1}) & -\frac{1}{2}(b-b^{-1}) \\ \frac{1}{2}(\frac{1}{2}-\frac{1}{4}(b+b^{-1})) & -\frac{1}{4}(b-b^{-1}) & \frac{1}{2}+\frac{1}{4}(b+b^{-1}) \end{psmatrix}=\begin{psmatrix}
1 & c & -\frac{1}{2}c^2  \\                                                                                                                      
0 & 1 & -c \\
0 & 0 & 1 
\end{psmatrix}\begin{psmatrix}
a & 0 & 0  \\
0 & 1 & 0 \\
0 & 0 & a^{-1} 
\end{psmatrix}k'.
\]
Here $k' \in K_{\SO_3}\subset K_H$.

For $F=\mathbb{R}$, let $K'=\mathrm{diag}(\SO(2,\mathbb{R}),1)$ and S be such that $\,^{t}SJ'S=J_3$, where $J'=\mathrm{diag}(-I_2,1)$. We have $K_{\SO_3}=\,^{t}SK'S$. Let $\lVert\cdot \rVert$ denote the Euclidean vector norm. Using the above decomposition, we have 
\begin{align*}
&\lVert (\tfrac{1}{2}(\tfrac{1}{2}-\tfrac{1}{4}(b+b^{-1})),  -\tfrac{1}{4}(b-b^{-1}), \tfrac{1}{2}+\tfrac{1}{4}(b+b^{-1})) \rVert = \lVert  (0,0,a^{-1})k \rVert,\\
&\lVert ((b-b^{-1}), \tfrac{1}{2}(b+b^{-1}),  -\tfrac{1}{2}(b-b^{-1}))  \rVert = \lVert (0,1,-ca^{-1})k' \rVert.
\end{align*}

Since the action of $K_{\SO_3}$ preserve the Euclidean norm, we get 
\begin{align*}
&|a^{-1}| = \lVert (0, 0, a^{-1}) \rVert =  \lVert (\tfrac{1}{2}(\tfrac{1}{2}-\tfrac{1}{4}(b+b^{-1})),  -\tfrac{1}{4}(b-b^{-1}), \tfrac{1}{2}+\tfrac{1}{4}(b+b^{-1})) \rVert, \\
&|ca^{-1}| \leq \lVert (0, 1, -ca^{-1}) \rVert = \lVert  ((b-b^{-1}), \tfrac{1}{2}(b+b^{-1}),  -\tfrac{1}{2}(b-b^{-1})) \rVert.
\end{align*}

For $F=\mathbb{C}$, $K_{\SO_3}\cong \SO(3,\mathbb{R})$. Similar as in the real case, the action of $K_{\SO_3}$ preserve $\lVert \cdot \rVert$. We have
\begin{align*}
&|a^{-1}| = \lVert (0, 0, a^{-1}) \rVert^2 = \lVert (\tfrac{1}{2}(\tfrac{1}{2}-\tfrac{1}{4}(b+b^{-1})),  -\tfrac{1}{4}(b-b^{-1}), \tfrac{1}{2}+\tfrac{1}{4}(b+b^{-1})) \rVert^2, \\
&|ca^{-1}| \leq \lVert  ((b-b^{-1}), \tfrac{1}{2}(b+b^{-1}),  -\tfrac{1}{2}(b-b^{-1})) \rVert^2.
\end{align*}

Therefore in both cases, we have 
\begin{align*}
|a| &\ll \max(|b|, |b^{-1}|)^{-1} = \min(|b|, |b^{-1}|),\\
|ca^{-1}| &\ll  \max(|b|, |b^{-1}|).
\end{align*}

We denote $\lfloor b \rfloor=b$ if $|b|\leq 1$ and $\lfloor b \rfloor=b^{-1}$ otherwise. Since 
\[
\begin{psmatrix}
1 & c & -\frac{1}{2}c^2  \\
0 & 1 & -c \\
0 & 0 & 1 
\end{psmatrix} \begin{psmatrix}
a  & 0 & 0  \\
0 & 1 & 0 \\
0 & 0 & a^{-1}
\end{psmatrix} = \begin{psmatrix}
a  & 0 & 0  \\
0 & 1 & 0 \\
0 & 0 & a^{-1} 
\end{psmatrix} \begin{psmatrix}
1 & ca ^{-1} & -\frac{1}{2}c^2a^{-2}  \\
0 & 1 & -ca^{-1} \\
0 & 0 & 1 
\end{psmatrix}, 
\]
the local integral is majorized by
\begin{multline*}
\int_{F^{\times}} \int_{F^{\times}} \int_{F^{\times}} \int_{K_G} |\rho(k)f(a_1y_1,a_2y_2,b)| |a_1|^{\frac{d_1}{2}-2}|a_2|^{\frac{d_2}{2}-2} |a_1a_2|^{\Re(s)-\frac{n+1}{2}} |\lfloor b^{-1}a_1a_2^{-1} \rfloor|^{\Re(s)-\frac{n-3}{2}} \\ \times \int_{N^{\circ}(F)} |W_{\rho_{\tau,s}}|(w unk''\iota(k), a_y\mathrm{diag}(a_1a_2, \lfloor b^{-1}a_1a_2^{-1} \rfloor, I_{n-2}))dud^{\times}bd^{\times}a_1d^{\times}a_2dk.
\end{multline*}
This is 
\begin{multline*}
\int_{F^{\times}}\int_{F^{\times}}  \int_{F^{\times}} \int_{K_G} |\rho(k)f(a_1y_1,a_2y_2,b)|  |a_1|^{\frac{d_1}{2}-2}|a_2|^{\frac{d_2}{2}-2}|a_1a_2|^{\Re(s)-\frac{n+1}{2}}|\lfloor b^{-1}a_1a_2^{-1} \rfloor|^{\Re(s)-\frac{n-3}{2}} \\
\times\int_{w N^{\circ}(F)w^{-1}} |W_{\rho_{\tau,s}}|(u(w nw^{-1})w k''\iota(k),a_y\mathrm{diag}(a_1a_2, \lfloor b^{-1}a_1a_2^{-1} \rfloor, I_{n-2}))dud^{\times}bd^{\times}a_1d^{\times}a_2dk,
\end{multline*}
where 
\[
n=\begin{psmatrix}
I_{n-1}&&&& \\
&1 & ca^{-1} & -\frac{1}{2}c^2a^{-2}&  \\
&& 1 & -ca^{-1}& \\
&&&1& \\
&&&&I_{n-1}\\
\end{psmatrix}, \quad 
 k''= \begin{psmatrix}
I_{n-1} && \\ 
&k'& \\
&&  I_{n-1}
\end{psmatrix}.
\]

Since 
\[
w nw^{-1} = \begin{psmatrix}
1&&&&&& \\ &1&&-\frac{1}{2}ca^{-1}&&-\frac{1}{4}c^2a^{-2}& \\ &&I_{n-2}&&&& \\ &&&1&&\frac{1}{2}ca^{-1}& \\ &&&&I_{n-2}&& \\ &&&&&1& \\ &&&&&&1 
\end{psmatrix},
\]
by \cref{B1} and \cref{B2} in \cref{7.2}, the local integral is majorized by
\begin{align*}
&\sum_j c_s \int_{F^{\times}} \int_{F^{\times}}  \int_{F^{\times}} \int_{K_G} |\rho(k)f(a_1y_1,a_2y_2,b)| |a_1|^{\frac{d_1}{2}-2}|a_2|^{\frac{d_2}{2}-2}|a_1a_2|^{\Re(s)-\frac{n+1}{2}} \\
&\times |\lfloor b^{-1}a_1a_2^{-1} \rfloor|^{\Re(s)-\frac{n-3}{2}}|\chi_j|(a_y\mathrm{diag}(a_1a_2, \lfloor b^{-1}a_1a_2^{-1} \rfloor, I_{n-2}))(1+\lVert -\frac{1}{2}ca^{-1} \rVert^4)^C \\
& \times \left(\int_{w N^{\circ}(F)w^{-1}}(1+\lVert u\rVert)^{-\frac{\Re(s)}{2}-\frac{n-2}{4}+C}du\right)d^{\times}b d^{\times}a_1d^{\times}a_2dk.
\end{align*}

Since $|ca^{-1}|\ll |\lfloor b \rfloor|^{-1}$, the above sum is majorized by 
\begin{align*}
&\sum_j c_s \int_{F^{\times}} \int_{F^{\times}} \int_{F^{\times}} \int_{K_G} |\rho(k)f(a_1y_1,a_2y_2,b)| |a_1|^{\frac{d_1}{2}-2}|a_2|^{\frac{d_2}{2}-2}|a_1a_2|^{\Re(s)-\frac{n+1}{2}}\\
& \times |\lfloor b^{-1}a_1a_2^{-1} \rfloor|^{\Re(s)-\frac{n-3}{2}-C'}|\chi_1^j(-4Q'(y_2)a_1a_2)||\chi_2^j(\lfloor b^{-1}a_1a_2^{-1} \rfloor)|\\
& \times \left(\int_{w N^{\circ}(F)w^{-1}} (1+\lVert u\rVert)^{-\frac{\Re(s)}{2}-\frac{n-2}{4}+C}du\right) d^{\times}bd^{\times}a_1d^{\times}a_2dk.
\end{align*}
Here $\chi_1^j$, $\chi_2^j$ are positive quasi-characters, and $C'$ is some positive integer depends only on $\tau$. 

Thus for $\Re(s)$ large, our sum is majorized by
\begin{align*}
&\sum_j c_s \int_{F^{\times}} \int_{F^{\times}} \int_{F^{\times}} \int_{K_G} |\rho(k)f(a_1y_1,a_2y_2,b)| |a_1|^{\frac{d_1}{2}-2}|a_2|^{\frac{d_2}{2}-2}\\
\times & |a_1a_2|^{\Re(s)-\frac{n+1}{2}}|-4Q'(y_2)|^{c_j} |a_1a_2|^{c_j}d^{\times}bd^{\times}a_1d^{\times}a_2dk,
\end{align*}
where $c_j$ is some integer depends on $\tau$. This is majorized by
\begin{align*}
&\sum_j c_s  \int_{F^{\times}}\int_{F^{\times}} \int_{F^{\times}} \int_{K_G} |\rho(k)f(a_1y_1,a_2y_2,b)| |a_1|^{\frac{d_1}{2}-2}|a_2|^{\frac{d_2}{2}-2}\\
\times & |a_1a_2|^{\Re(s)-\frac{n+1}{2}}|a_1y_1|^{c_j} |a_2y_2|^{c_j}d^{\times}bd^{\times}a_1d^{\times}a_2dk.
\end{align*}

Then for $\Re(s)$ large, the local integral is majorized by a constant times a finite sum of integrals of the form 
\begin{align*}
&\int_{F^{\times}} \int_{F^{\times}} \int_{K_G} |\rho(k)\tilde{f}(a_1y_1,a_2y_2)| |a_1|^{\frac{d_1}{2}-2}|a_2|^{\frac{d_2}{2}-2}|a_1a_2|^{\Re(s)-\frac{n+1}{2}}d^{\times}a_1d^{\times}a_2dk\\
=&\int_{F^{\times}} \int_{F^{\times}} |f'(a_1y_1, a_2y_2)| |a_1|^{\Re(s)+\frac{d_1-n-1}{2}}|a_2|^{\Re(s)+\frac{d_2-n-1}{2}} d^{\times}a_1d^{\times}a_2.
\end{align*}
Here $\tilde{f} \in \S(V(F))$ and $f'(v)=\int_{K_G}\rho(k)\tilde{f}(v)$ for $v\in V(F)$. Then we deduce the lemma. 
\end{proof}

\section{Absolute Convergence} \label{8}
In this section, we handle the absolute convergence of the sum of the global integral in the main theorem (\cref{thm}), which will make the proof of \cref{thm} rigorous. The proof follows from the absolute convergence of the local integrals in the Archimedean case and the non-Archimedean case. 

\begin{lemma}
The sum of the global integral 
\[
\sum_{y\in \mathbb{P}Y'(F)}I(f,W_{\xi_s})(y)=\sum_{y\in \mathbb{P}Y'(F)}\int_{U_2(\A_F)\backslash G(\A_F)} \rho(g)f(y,1)\int_{N^{\circ}(\A_F)} W_{\xi_s}(w u\iota(g),a_y)\psi(u)dudg 
\]
converges absolutely for $\Re(s)$ large enough. 
\end{lemma}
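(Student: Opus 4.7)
The plan is to bootstrap the local bounds of \cref{5} and \cref{7} to a global bound, then treat the resulting sum by a Tate-type argument. Assume without loss of generality that $f$ and $\xi_s$ are pure tensors, so that the global integral $I(f,W_{\xi_s})(y)$ factors as an Eulerian product $\prod_v I_{s,v}(y)$ of the local integrals estimated in \cref{5.3}, \cref{5.5}, and \cref{7.3}. By linearity, the general case follows.

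The first step is to produce an adelic bound on $|I(f,W_{\xi_s})(y)|$. At every ramified nonarchimedean place one applies \cref{5.3}, at archimedean places one applies \cref{7.3}, and at all but finitely many places one applies the unramified bound of \cref{5.5}. Multiplying these place-by-place bounds, one obtains, for $\Re(s)$ sufficiently large,
\[
|I(f,W_{\xi_s})(y)| \ll_s \int_{\A_F^\times \times \A_F^\times} f'(a_1 y_1, a_2 y_2)\,|a_1|^{\Re(s)+\frac{d_1-n-1}{2}-c_0}|a_2|^{\Re(s)+\frac{d_2-n-1}{2}-c_0}\, d^\times a_1\, d^\times a_2
\]
for some nonnegative $f' \in \mathcal{S}(V(\A_F))$ depending on $f$ and $\xi_s$. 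The key point is that the unramified factor $\zeta_v(\Re(s)+c_\tau)$ appearing in \cref{5.5} is of the shape that yields a convergent Euler product for $\Re(s)$ large, and the local integrals at the remaining places contribute only finitely many bounded factors. The local Schwartz data likewise assemble into a global Schwartz--Bruhat function, because at almost all places $f_v$ is a basic vector.

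The second step is to control the sum over $y \in \mathbb{P}Y'(F)$. Substituting the adelic bound and using positivity to interchange sum and integral, it suffices to show that
\[
\sum_{y \in \mathbb{P}Y'(F)} \int_{\A_F^\times \times \A_F^\times} f'(a_1 y_1, a_2 y_2)\,|a_1|^{\sigma_1}|a_2|^{\sigma_2}\, d^\times a_1\, d^\times a_2
\]
is finite for $\sigma_1 = \Re(s)+\tfrac{d_1-n-1}{2}-c_0$ and $\sigma_2 = \Re(s)+\tfrac{d_2-n-1}{2}-c_0$ sufficiently large. Choosing representatives in $Y'(F)$, the scaling action of $F^\times$ on $\mathbb{P}Y'(F)$ is absorbed into the Tate measure on $F^\times \backslash \A_F^\times$. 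Standard Tate-theoretic estimates then reduce matters to bounding a sum of the form $\sum_{\gamma \in V'(F)} f'(\gamma)\, \Psi(\gamma)$ against an adelic Schwartz--Bruhat function, which converges by the standard fact that Schwartz--Bruhat functions on $V(\A_F)$ decay sufficiently rapidly along the discrete set $V(F)$.

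The main obstacle is not any individual step but the coherent assembly of the local data: one must verify that the local bounds at unramified places are uniform enough that their infinite product is dominated by a convergent Euler product (which is where the shift by $c_0$ and the $\zeta_v$ factor in \cref{5.5} are essential), and that the Schwartz functions $f_v'$ produced by the local estimates combine into a genuine element of $\mathcal{S}(V(\A_F))$. Once this is established, the remaining Tate-type estimate is routine, and absolute convergence of $\sum_{y\in \mathbb{P}Y'(F)} I(f,W_{\xi_s})(y)$ follows for $\Re(s)$ sufficiently large, justifying all manipulations in the proof of \cref{thm}.
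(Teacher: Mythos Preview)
Your approach is essentially the same as the paper's: assemble the local bounds of \cref{5.3}, \cref{5.5}, and \cref{7.3} into a global majorant of the shape $\zeta_F(\Re(s)+c_\tau)\int_{\A_F^\times\times\A_F^\times}f'(a_1y_1,a_2y_2)|a_1|^{\sigma_1}|a_2|^{\sigma_2}\,d^\times a_1\,d^\times a_2$, then bound the sum over $y$. The paper carries out the second step by enlarging the sum from $\mathbb{P}Y'(F)$ to $\mathbb{P}V(F)$, unfolding to $V(F)$ against $F^\times\backslash\A_F^\times$, and recognizing the result as a product of mirabolic Eisenstein series in the sense of Jacquet--Shalika~\cite{JS2}, whose convergence for $\Re(s)\gg 0$ is known; your ``standard Tate-theoretic estimates'' point to the same mechanism but would benefit from this identification being made explicit.
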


\begin{proof}
Let $y=(y_1, y_2)\in \mathbb{P}Y'(F)$. Let $S$ be a finite set of places of $F$ that includes the infinite places and all the finite places such that $q_v < n$, $f^{S}=\one_{V(\hat{\OO}^S)\times \prod_{v\not\in S}\OO_v^{\times}}$, $\tau_v$ unramified, $|\mathcal{Q}'(y_2)|_v=1$  and $\rho_v(k)f_v=f_v$ for $k \in G(\OO_v)$ for $v \not \in S$.

Using the results and notations of \cref{5.3}, \cref{5.5} and \cref{7.3}, for $\Re(s)$ large we have 
\begin{align*}
I(f,W_{\xi_s})(y) &= \prod_{v|\infty}I_v(f_v,W_{\rho_{\tau,s}})(y)\prod_{v\in \mathrm{S-\infty}}I_v(f_v, W^{\circ}_{\rho_{\tau,s}})(y)\prod_{v\notin \mathrm{S}}I_v(f_v, W_{\rho_{\tau,s}})(y) \\
&\ll \prod_{v|\infty} \int_{F^{\times}_v} \int_{F^{\times}_v} |f'_v(a_1y_1, a_2y_2)| |a_1|_v^{\Re(s)+\frac{d_1-n-1}{2}-c_0}|a_2|_v^{\Re(s)+\frac{d_2-n-1}{2}-c_0} d^{\times}a_1d^{\times}a_2 \\
&\times \prod_{v\in \mathrm{S}-\infty}\int_{F^{\times}}\int_{F^{\times}} |f'_v(a_1y_1, a_2y_2)| |a_1|_v^{\Re(s)+\frac{d_1-n-1}{2}-c_0}|a_2|_v^{\Re(s)+\frac{d_2-n-1}{2}-c_0} d^{\times}a_1d^{\times}a_2\\
&\times \prod_{v\notin \mathrm{S}} \zeta_v(\Re(s)+c_{\tau}) \int_{F^{\times}} \int_{F^{\times}} |\one_{V(\OO_F)}(a_1y_1, a_2y_2)| |a_1|_v^{\Re(s)+\frac{d_1-n-1}{2}-c_0}\\
&\times |a_2|_v^{\Re(s)+\frac{d_2-n-1}{2}-c_0} d^{\times}a_1d^{\times}a_2\\
& = \zeta_F(\Re(s)+c_{\tau}) \int_{\A_{F}^{\times}} \int_{\A_{F}^{\times}}   |f'(a_1y_1,a_2y_2)| |a_1|^{\Re(s)+\frac{d_1-n-1}{2}-c_0}\\
& \times |a_2|^{\Re(s)+\frac{d_2-n-1}{2}-c_0} d^{\times}a_1d^{\times}a_2.
\end{align*}
Here $f' \in \S(V(\A_F))$ depends on $f$, and $c_0>0$ is a constant depends only on $\tau$. 

Thus the sum of the global integral is majorized by a finite sum of a sum of integrals of the form
\[
\sum_{y\in \mathbb{P}Y'(F)}  \zeta_F(\Re(s)+c_{\tau}) \int_{\A_{F}^{\times}} \int_{\A_{F}^{\times}}  |f'(a_1y_1,a_2y_2)| |a_1|^{\Re(s)+\frac{d_1-n-1}{2}-c_0}|a_2|^{\Re(s)+\frac{d_2-n-1}{2}-c_0} d^{\times}a_1d^{\times}a_2,
\]
which, when $\Re(s)$ large, is majorized by 
\begin{align*}
&\sum_{y\in \mathbb{P}V(F)}  \zeta_F(\Re(s)+c_{\tau}) \int_{\A_{F}^{\times}} \int_{\A_{F}^{\times}}  |f'(a_1y_1,a_2y_2)| |a_1|^{\Re(s)+\frac{d_1-n-1}{2}-c_0}|a_2|^{\Re(s)+\frac{d_2-n-1}{2}-c_0} d^{\times}a_1d^{\times}a_2 \\
= & \sum_{y\in V(F)}  \zeta_F(\Re(s)+c_{\tau}) \int_{F^{\times} \backslash \A_{F}^{\times}} \int_{F^{\times} \backslash \A_{F}^{\times}}  |f'(a_1y_1,a_2y_2)| |a_1|^{\Re(s)+\frac{d_1-n-1}{2}-c_0}|a_2|^{\Re(s)+\frac{d_2-n-1}{2}-c_0} \\
\times & d^{\times}a_1d^{\times}a_2.
\end{align*}
This is a product of mirabolic Eisenstein series which converge absolutely for $\Re(s)$ large enough (see \cite{JS}). 
\end{proof}

\section{Unramified computation} \label{4}

In this section, we give the computation for the unramified local factor of the global integral $I(f, W_{\rho_{\tau,s}})$. The results of this section shed light on the nature of the local integrals $I(f,W_{\rho_{\tau,s}}).$  

We assume all data are unramified, i.e. the local field $F$ is absolutely unramified, and the character $\psi$ is unramified. Let $\tau$ be an irreducible unramified generic representation of $\GL_n(F)$. We denote $K_G=G(\OO)$.  We assume that the matrices of $\mathcal{Q}$ and $\mathcal{Q'}$ are invertible and that the residual characteristic is not $2$. We also assume that $d_2 > d_1$.

Let $f\in \S(V(F)\times F^{\times})$ be $f(v,u)=\one_{V(\mathcal{O})\times \OO^{\times}}(v,u)$ for $v\in V(F), u\in F^{\times}$. Let $\rho$ denote the local Weil representation of $G(F)$. Then $\rho(k)f=f$ for $k \in K_G$. 

Let 
\begin{align} \label{local vector}
W^{\circ}_{\rho_{\tau,s}} \in \rho_{\tau,s}=\mathrm{Ind}_{Q_n(F)}^{H(F)}(\mathcal{W}(\tau, \psi_0) \otimes |\det |^{s-\tfrac{1}{2}})
\end{align}
be the unique spherical vector satisfying $W^{\circ}_{\rho_{\tau,s}}(1,1)=1$. 
  
The Satake parameter for $\rho_{\tau,s}$ is defined (up to a permutation) by
\[
t_{\rho_{\tau,s}}=\mathrm{diag}(\chi_{1,s}(\varpi), \dots, \chi_{n,s}(\varpi),\chi_{n,s}(\varpi)^{-1},\dots,\chi_{1,s}(\varpi)^{-1}) \in \mathrm{Sp}_{2n}(\C).
\]
Here each $\chi_i:F^\times \to \CC^\times$ is an unramified character and
$$
\chi_{i,s}:=\chi_{i}|\cdot|^{s-\tfrac{1}{2}}.
$$
We set
$$
\chi_s:=\chi_{1,s} \otimes \dots \otimes \chi_{n,s}:(F^\times)^n \lto \CC^\times.
$$
For an unramified character $\mu$ of split $\SO_2(F)$, the Satake parameter is 
\[
t_{\mu}=\mathrm{diag}(\mu(\varpi), \mu(\varpi)^{-1}) \in \SO_2(\C).
\]

We then have the local integral
\begin{align} \label{basicfunction}
I_s(y):=I(f, W^{\circ}_{\rho_{\tau,s}})(y)=\int_{U_2(F)\backslash G(F)} \rho(g)f(y,1)\int_{N^{\circ}(F)} W^{\circ}_{\rho_{\tau,s}}(w u\iota(g),a_y)\psi_1(u)dudg
\end{align}
as the unramified local component of the global integral $I(f,W_{\rho_{\tau,s}})(y)$ by construction. We assume $y$ is integral.
To compute the local integral $I_s$ we will adapt a procedure in \cite{Kap}.  This requires us to write the function $\rho(g)f(y,1)$ in terms of functions lying in an appropriate Whittaker model.  

Let 
\begin{align} \label{c_k}
\alpha(y_1,y_2) = \prod_{i=1}^2(2+\sum_{k'=1}^{\mathrm{val}(y_i)}q^{k'}(1-(q-1)\sum_{k=1}^{\mathrm{val}(y_i)}q^{(\frac{d_i}{2}-2)k}))^{-1}.
\end{align}
Let $H_{1,y}, H_{2,y} \in C^{\infty}(T_G(F))$ be such that
\begin{align*}
& H_{1,y}\left( \begin{psmatrix} a_1 & \\ & a_1^{-1} \end{psmatrix},\begin{psmatrix} a_2 & \\ & a_2^{-1} \end{psmatrix}\right) =|a_1|^{\frac{d_1}{2}-2}|a_2|^{\frac{d_2}{2}-2}, \\
& H_{1,y}\left( \begin{psmatrix} 1 & \\ & d \end{psmatrix},\begin{psmatrix} 1 & \\ & d^{-1} \end{psmatrix}\right) = \alpha(y_1, y_2), 
\end{align*}
\begin{align*}
H_{2,y} \left( \begin{psmatrix} a_1 & \\ & a_1^{-1} \end{psmatrix},\begin{psmatrix} a_2 & \\ & a_2^{-1} \end{psmatrix}\right) &= (\one_{\varpi^{-\mathrm{val}(y_1)}\mathcal{O}_{F}}(a_1) - \sum_{k=1}^{\infty} q^{(\frac{d_1}{2}-2)k}(q-1)\one_{\varpi^{k-\mathrm{val}(y_1)}  \mathcal{O}_{F}}(a_1))\\
&\times (\one_{\varpi^{-\mathrm{val}(y_2)}\mathcal{O}_{F}}(a_2) - \sum_{k=1}^{\infty} q^{(\frac{d_2}{2}-2)k}(q-1)\one_{\varpi^{k-\mathrm{val}(y_2)}  \mathcal{O}_{F}}(a_2)), \\
H_{2,y}\left( \begin{psmatrix} 1 & \\ & d \end{psmatrix},\begin{psmatrix} 1 & \\ & d^{-1} \end{psmatrix}\right) &=  \one_{\OO^{\times}} (d).
\end{align*}
Here $a_i, d\in F^{\times}$.

Using the Iwasawa decomposition with respect to the Borel subgroup of $G(F)$ consisting of lower triangular matrices in $G(F)$, we define functions $\Phi_{1,y}, \Phi_{2,y} \in C^{\infty}(\overline{U_2}(F) \backslash G(F) / K_G )$ (here $\overline{U_2}(F)$ is the unipotent radical of the lower Borel subgroup of $G(F)$) by 
\[
\Phi_{1,y}(g): G(F) \to \C
\]
\[
\left( \begin{psmatrix}
1 & 0 \\
t_1 & 1
\end{psmatrix}
\begin{psmatrix} a & \\ & d \end{psmatrix} \kappa_1
, \begin{psmatrix}
1 & 0 \\
t_2 & 1
\end{psmatrix}
\begin{psmatrix}
a' & 0 \\
0 & (ada')^{-1}
\end{psmatrix}\kappa_2  \right) 
\mapsto H_{1,y}\left( \begin{psmatrix} a & \\ & d \end{psmatrix} , \begin{psmatrix}
a' & 0 \\
0 & (ada')^{-1}
\end{psmatrix} \right), 
\]
and
\[
\Phi_{2,y}(g): G(F) \to \C
\]
\[
\left(  \begin{psmatrix}
1 & 0 \\
t_1 & 1
\end{psmatrix} 
\begin{psmatrix} a & \\ & d \end{psmatrix}\kappa_1
, \begin{psmatrix}
1 & 0 \\
t_2 & 1
\end{psmatrix}
\begin{psmatrix}
a' & 0 \\
0 & (ada')^{-1}
\end{psmatrix} \kappa_2 \right)
\mapsto H_{2,y}\left( \begin{psmatrix} a & \\ & d \end{psmatrix} , \begin{psmatrix}
a' & 0 \\
0 & (ada')^{-1}
\end{psmatrix} \right).
\]
Here $a,a',d \in F^\times$, $t_i \in F$, and $(\kappa_1,\kappa_2) \in K_G$. 

We let $\Phi_{y}=\Phi_{1,y}\Phi_{2,y}$, and $H_{y}=H_{1,y}H_{2,y}$. We first show that an appropriate integral of $\Phi_{y}$ is $\rho(g)f(y,1)$ in the following lemma:
\begin{lemma}{\label{4.1}}
We have 
\begin{align} \label{lem4.1}
\rho(g)f(y,1)=\int_{U_2(F)} \Phi_{y}(ng) \psi_{U_2,Q}(n)^{-1}dn,
\end{align}
where $\psi_{U_2, Q}(n)=\psi(n_1\mathcal{Q}(y_1)+n_2\mathcal{Q'}(y_2))$ for $n=\left( \begin{psmatrix}
1 & n_1 \\
0 & 1
\end{psmatrix} \right), \left( \begin{psmatrix}
1 & n_2 \\
0 & 1
\end{psmatrix} \right)\in U_2(F)$. 

\end{lemma}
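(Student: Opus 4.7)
The plan is to show that both sides of \eqref{lem4.1} are smooth functions on $G(F)$ that are right $K_G$-invariant and transform by the same character under left translation by $U_2(F)$, and then use the Iwasawa decomposition to reduce to a computation on the torus.

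First I would establish right $K_G$-invariance of both sides. The LHS satisfies $\rho(gk)f(y,1) = \rho(g)\rho(k)f(y,1) = \rho(g)f(y,1)$, using that $f = \one_{V(\OO) \times \OO^\times}$ is fixed by $\rho(K_G)$ in the unramified setting. The RHS is $K_G$-invariant by construction of $\Phi_y$ via the lower Iwasawa decomposition.

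Next I would verify the $U_2(F)$-equivariance on the left. For $u = \left(\begin{psmatrix} 1 & u_1 \\ 0 & 1 \end{psmatrix}, \begin{psmatrix} 1 & u_2 \\ 0 & 1 \end{psmatrix}\right) \in U_2(F)$, the standard formula for the Weil representation gives $\rho(u)h(\gamma,1) = \psi(u_1 \mathcal{Q}(\gamma_1) + u_2 \mathcal{Q}'(\gamma_2)) h(\gamma,1)$, and evaluating $\rho(ug)f = \rho(u)(\rho(g)f)$ at $(y,1)$ produces the factor $\psi_{U_2,Q}(u)$ on the LHS. On the RHS, substituting $n \mapsto n u^{-1}$ produces the same factor.

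By the Iwasawa decomposition $G(F) = U_2(F) T_G(F) K_G$, it suffices to verify the identity for $g = t \in T_G(F)$. Parameterize $t = (\mathrm{diag}(a, d), \mathrm{diag}(a', (ada')^{-1}))$ and decompose it as the product of an $\SL_2^2$-element $(\mathrm{diag}(a,a^{-1}), \mathrm{diag}(a', a'^{-1}))$ and the central element $(\mathrm{diag}(1, ad), \mathrm{diag}(1, (ad)^{-1}))$. Applying the explicit formulas for the Weil representation, the LHS evaluates to $|a|^{d_1/2} |a'|^{d_2/2}$ whenever $a y_1 \in V_1(\OO)$, $a' y_2 \in V_2(\OO)$, and $|ad| = 1$, and vanishes otherwise. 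For the RHS at $g = t$, I would compute $\int_{U_2(F)} \Phi_y(n t) \psi_{U_2,Q}(n)^{-1} dn$ by breaking the $s_i$-integration into the regions where $\begin{psmatrix} 1 & s_i \\ 0 & 1\end{psmatrix}\cdot t_i$ has trivial versus nontrivial lower Iwasawa shift, using the explicit decompositions derived from $\begin{psmatrix} 1 & s \\ 0 & 1\end{psmatrix} = \begin{psmatrix} 1 & 0 \\ s^{-1} & 1 \end{psmatrix} \begin{psmatrix} s & 0 \\ 0 & s^{-1}\end{psmatrix} k'$ for $|s| > 1$. In each region $\Phi_y(nt)$ becomes an explicit product of absolute-value characters, and the integrals against $\psi(s_i \mathcal{Q}(y_i)/2)^{-1}$ and $\psi(s_i \mathcal{Q}'(y_i))^{-1}$ reduce to finite geometric sums in $q^{\mathrm{val}(y_i)}$.

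The main obstacle will be matching normalizations. The exponents $d_i/2 - 2$ appearing in $H_{1,y}$ must combine with the modular character from the Iwasawa Jacobian to reproduce $|a|^{d_1/2}|a'|^{d_2/2}$, and the geometric series arising from the large-$|s_i|$ regions must sum, together with the normalizing factor $\alpha(y_1,y_2)$ built into $H_{1,y}$, to the indicator of $a y_i \in V_i(\OO)$. This last identification is the core calculation, and it is what dictates the precise form of $\alpha(y_1,y_2)$ in \eqref{c_k}: the denominator $2+\sum_{k'=1}^{\mathrm{val}(y_i)} q^{k'}(1-(q-1)\sum_{k=1}^{\mathrm{val}(y_i)}q^{(d_i/2-2)k})$ is exactly the geometric sum produced by unfolding the Iwasawa contributions against the character $\psi_{U_2,Q}$.
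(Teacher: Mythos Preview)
Your proposal is correct and follows essentially the same approach as the paper: both arguments reduce to the torus via right $K_G$-invariance and left $U_2(F)$-equivariance, then compute explicitly by factoring across the two $\GL_2$-components and breaking the upper-unipotent integral into the regions determined by the lower Iwasawa decomposition, with the constant $\alpha(y_1,y_2)$ emerging as exactly the normalization needed to make the resulting geometric sums match. The only cosmetic difference is that the paper organizes the torus verification as two separate checks (for the $\SL_2^2$-diagonal part and for the determinant part) rather than treating a general torus element at once, but the underlying computation is the same.
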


\begin{proof}
As a function of $g$, both sides of the equality are invariant under $K_G$ on the right and both transform via the same character under $U_2(F)$ on the left.  Thus it suffices to verify the equality \cref{lem4.1} for 
\begin{align*}
g_1 &=\left( \begin{psmatrix} a_1 & \\ & a_1^{-1} \end{psmatrix},\begin{psmatrix} a_2 & \\ & a_2^{-1} \end{psmatrix}\right), \\
g_2 &= \left( \begin{psmatrix} 1 & \\ & d \end{psmatrix},\begin{psmatrix} 1 & \\ & d^{-1} \end{psmatrix}\right). 
\end{align*}

We have 
\begin{align*}
\rho\left( \begin{psmatrix} a_1 & \\ & a_1^{-1} \end{psmatrix},\begin{psmatrix} a_2 & \\ & a_2^{-1} \end{psmatrix}\right)f(y,1) &=|a_1|^{\frac{d_1}{2}}|a_2|^{\frac{d_2}{2}}\one_{V(\OO)}(a_1y_1,a_2y_2)\\&=|a_1|^{\frac{d_1}{2}}|a_2|^{\frac{d_2}{2}}\one_{\varpi^{-\mathrm{val}(y_1)}\mathcal{O}}(a_1)\one_{\varpi^{-\mathrm{val}(y_2)}\OO}(a_2), \\
\rho\left( \begin{psmatrix} 1 & \\ & d \end{psmatrix},\begin{psmatrix} 1 & \\ & d^{-1} \end{psmatrix}\right)f(y,1) &= \one_{\OO^{\times}} (d)\one_{\OO^{\times}} (d^{-1})|d|^{\frac{d_2-d_1}{4}}\\&=\one_{\OO^{\times}} (d).
\end{align*}

On the other hand, the right hand side of \cref{lem4.1} in the two cases are
\begin{align*}
&\int_{F^2} \Phi_{y}\left(\begin{psmatrix}1 & n_1 \\ & 1\end{psmatrix}\begin{psmatrix} a_1 & \\ & a_1^{-1} \end{psmatrix}, \begin{psmatrix}1 & n_2 \\ & 1\end{psmatrix}\begin{psmatrix} a_1 & \\ & a_1^{-1} \end{psmatrix} \right)\psi(n_1\mathcal{Q}(y_1)+n_2\mathcal{Q'}(y_2)) dn_1dn_2, \\
&\int_{F^2} \Phi_{y}\left( \begin{psmatrix}1 & n_1 \\ & 1\end{psmatrix} \begin{psmatrix} 1 & \\ & d \end{psmatrix}, \begin{psmatrix}1 & n_2 \\ & 1\end{psmatrix} \begin{psmatrix} 1 & \\ & d^{-1} \end{psmatrix}\right) \psi(n_1\mathcal{Q}(y_1)+n_2\mathcal{Q'}(y_2)) dn_1dn_2.  
\end{align*}
Let $\Phi_{y}(g_1,g_2)=\Phi_{y}'(g_1)\Phi_{y}''(g_2)$ where $(g_1,g_2)\in G(F)$, and 
$$
\Phi_{y}', \Phi_{y}'' \in C^{\infty}(\overline{U_2}(F) \backslash \SL_2(F) / \SL_2(\OO_F)).
$$
We denote accordingly for $H_y(t_1,t_2)=H'_y(t_1)H''_y(t_2)$ where $(t_1,t_2) \in T_G(F)$.

By symmetry, it suffices to verify the equalities

\begin{align*}
&\one_{\varpi^{-\mathrm{val}(y_1)}\mathcal{O}_{F}}(a)|a|^{\frac{d_1}{2}} = \int_{F} \Phi_{y}'\left(\begin{psmatrix}1 & n \\ & 1\end{psmatrix}\begin{psmatrix} a & \\ & a^{-1} \end{psmatrix}\right)\psi(n\mathcal{Q}(y_1)) dt, \\
&\one_{\OO_F^{\times}} (d) = \int_{F} \Phi_{y}'\left(\begin{psmatrix}1 & n \\ & 1\end{psmatrix}\begin{psmatrix} 1 & \\ & d \end{psmatrix}\right)\psi(n\mathcal{Q}(y_1)) dt
\end{align*}
for $a,d\in F^{\times}$. 

Applying the Iwasawa decompostion with respect to the lower Borel subgroup of $\SL_2(F)$ to $\begin{psmatrix}1 & n \\ & 1\end{psmatrix}\begin{psmatrix} a & \\ & a^{-1} \end{psmatrix}$ and $\begin{psmatrix}1 & n \\ & 1\end{psmatrix}\begin{psmatrix} 1 & \\ & d \end{psmatrix}$, we have 
\[
\begin{psmatrix}1 & n \\ & 1\end{psmatrix}\begin{psmatrix} a & \\ & a^{-1} \end{psmatrix} = \begin{cases} \begin{psmatrix}
a & \\ & a^{-1} 
\end{psmatrix} \begin{psmatrix}
1 & a^{-2}n & \\ 0 & 1
\end{psmatrix} &\textrm{for} \quad |a^{-2}n|\leq 1
\\
\begin{psmatrix}
1 &  \\ n^{-1} & 1 
\end{psmatrix} \begin{psmatrix}
a^{-1}n & \\ & an^{-1}
\end{psmatrix}
\begin{psmatrix}
a^2n^{-1} & 1 \\ -1 & 0
\end{psmatrix} & \textrm{for} \quad |a^{-2}n| > 1
\end{cases},
\]
\[
\begin{psmatrix}1 & n \\ & 1\end{psmatrix}\begin{psmatrix} 1 & \\ & d \end{psmatrix} = \begin{cases} \begin{psmatrix}
1 & \\ & d 
\end{psmatrix} \begin{psmatrix}
1 & dn & \\ 0 & 1
\end{psmatrix}  &\textrm{for} \quad |dn|\leq 1
\\
\begin{psmatrix}
1 &  \\ n^{-1} & 1 
\end{psmatrix} \begin{psmatrix}
dn & \\ & n^{-1}
\end{psmatrix}
\begin{psmatrix}
(dn)^{-1} & 1 \\ -1 & 0
\end{psmatrix} & \textrm{for} \quad |dn| > 1
\end{cases}.
\]

Then it suffices to verify the equalities
\begin{align} \begin{split}
\one_{\varpi^{-\mathrm{val}(y_1)}\mathcal{O}_{F}}(a)|a|^{\frac{d_1}{2}} &= \int_{|n| \leq |a|^{2}} H'_{y}\begin{psmatrix}
a & \\ & a^{-1} 
\end{psmatrix} \psi(n\mathcal{Q}(y_1))^{-1} dn \\
& + \int_{|a|^2 < |n| } H'_{y}\begin{psmatrix}
a^{-1}n & \\ & an^{-1}
\end{psmatrix} \psi(n\mathcal{Q}(y_1))^{-1} dn \label{eqa1} 
\end{split},
\end{align}
\begin{align}
\begin{split}
\one_{\OO_F^{\times}} (d) & = \int_{|n| \leq |d|^{-1}} H'_{y}\begin{psmatrix}
1 & \\ & d 
\end{psmatrix} \psi(n\mathcal{Q}(y_1))^{-1} dn \\
& + \int_{|d|^{-1} < |n| } H'_{y}\begin{psmatrix}
dn & \\ & n^{-1}
\end{psmatrix} \psi(n\mathcal{Q}(y_1))^{-1} dn  \label{eqa2}
\end{split}.
\end{align}

We first verify \cref{eqa1}.

For $|a|>q^{\mathrm{val}(y_1)}$, $H'_{y}\begin{psmatrix}
a & \\ & a^{-1} 
\end{psmatrix}=0$ and $H'_{y}\begin{psmatrix}
a^{-1}n & \\ & an^{-1}
\end{psmatrix}=0$ when $|a|^2<n$, thus the right hand side of \cref{eqa1} is $0$, while the left hand side is also $0$. In particular, we have the equality for $|a|>q^{\mathrm{val}(y_1)}$.

For $|a|=q^{\mathrm{val}(y_1)}$, $H'_{y}\begin{psmatrix}
a^{-1}n & \\ & an^{-1}
\end{psmatrix}=0$ when $|a|^2<n$, thus we have that the right hand side of \cref{eqa1}
is
\begin{align*}
\int_{|n|\leq |a|^2} H'_{y}\begin{psmatrix}
a & \\ & a^{-1} 
\end{psmatrix} \psi(n\mathcal{Q}(y_1))^{-1} dn &= H'_{y}\begin{psmatrix}
a & \\ & a^{-1} 
\end{psmatrix}\int_{|n|\leq q^{2\mathrm{val}(y_1)}}\psi(n\mathcal{Q}(y_1))^{-1} dn \\
&= q^{2\mathrm{val}(y_1)}H'_{y}\begin{psmatrix}
a & \\ & a^{-1} 
\end{psmatrix}\int_{\OO_F} \psi(n\varpi^{-2\mathrm{val}(y_1)}\mathcal{Q}(y_1))^{-1} dn\\
&= q^{2\mathrm{val}(y_1)}H'_{y}\begin{psmatrix}
a & \\ & a^{-1} 
\end{psmatrix} \\
&= q^{2\mathrm{val}(y_1)}|a|^{\frac{d_1}{2}-2}\one_{\varpi^{-\mathrm{val}(y_1)}\mathcal{O}_{F}}(a) \\
&= \one_{\varpi^{-\mathrm{val}(y_1)}\mathcal{O}_{F}}(a)|a|^{\frac{d_1}{2}} = q^{\frac{d_1}{2}\mathrm{val}(y_1)},
\end{align*}
so \cref{eqa1} is valid in this case.  

For $|a|<q^{\mathrm{val}(y_1)}$, we have that the left hand side of \cref{eqa1} is 
\begin{align*}
&  H'_{y}\begin{psmatrix}
a & \\ & a^{-1} 
\end{psmatrix} \int_{|n| \leq |a|^2} \psi(n\mathcal{Q}(y_1))^{-1} dn + \int_{|a|^2 < |n| } H'_{y}\begin{psmatrix}
a^{-1}n & \\ & an^{-1}
\end{psmatrix} \psi(n\mathcal{Q}(y_1))^{-1} dn\\ &= q^{-2\mathrm{val}(a)}H'_{y}\begin{psmatrix}
a & \\ & a^{-1} 
\end{psmatrix}+ \int_{q^{-\mathrm{val}(a)} < |n| \leq q^{\mathrm{val}(y_1)}} H'_{y}\begin{psmatrix}
n & \\ & n^{-1}
\end{psmatrix} \psi(mn\mathcal{Q}(y_1))^{-1} q^{-\mathrm{val}(a)}dn \\
&= q^{-2\mathrm{val}(a)}H'_{y}\begin{psmatrix}
a & \\ & a^{-1} 
\end{psmatrix} + \sum_{k=0}^{\mathrm{val}(y_1)+\mathrm{val}(a)-1}q^{-2\mathrm{val}(a)+k}(q-1)H'_{y}\begin{psmatrix}
\varpi^{\mathrm{val}(a)-1+k} & \\ & \varpi^{-\mathrm{val}(a)+1-k}
\end{psmatrix}.
\end{align*}
On the other hand the right hand side is $ q^{-\frac{d_1}{2}\mathrm{val}(a)}$. One can show by induction on $\mathrm{val}(a)$, for $\mathrm{val}(a)>\mathrm{val} (y_1)$, that these are equal.  

For \cref{eqa2}, the right hand side is 
\begin{align*}
&\int_{|n| \leq |d|^{-1}} H'_{y}\begin{psmatrix}
1 & \\ & d 
\end{psmatrix} \psi(n\mathcal{Q}(y_1))^{-1} dn + \int_{|d|^{-1} < |n| } H'_{y}\begin{psmatrix}
dn & \\ & n^{-1}
\end{psmatrix} \psi(n\mathcal{Q}(y_1))^{-1} dn \\
=& H'_{y}\begin{psmatrix}
1 & \\ & d 
\end{psmatrix} \left(\int_{|n| \leq |d|^{-1}} \psi(n\mathcal{Q}(y_1))^{-1} dn + \int_{|d|^{-1} < |n| } H'_{y}\begin{psmatrix}
dn & \\ & (dn)^{-1}
\end{psmatrix} \psi(n\mathcal{Q}(y_1))^{-1}dn\right).
\end{align*}

When $d\in \OO^{\times}$, the above expression is
\begin{align*}
&\frac{1+\int_{|dn|>1} H'_{y}\begin{psmatrix}
dn & \\ & (dn)^{-1}
\end{psmatrix} \psi(n\mathcal{Q}(y_1))^{-1}dn)}{2+\sum_{k'=1}^{\mathrm{val}(y_1)}q^{k'}(1-(q-1)\sum_{k=1}^{\mathrm{val}(y_1)}q^{(\frac{d_1}{2}-2)k})} = 1 
\end{align*}
This is equal to the left-hand side of \cref{eqa2}.

Finally, when $d\not\in \OO^{\times}$, both the left-hand side and right-hand side of \cref{eqa2} are $0$. This exhausts the cases and proves the lemma.
\end{proof}

Inserting the result of Lemma \ref{4.1} to the local integral $I$, we obtain
\[
I_s(y)=\int_{U_2(F)\backslash G(F)} \int_{U_2(F)} \Phi_{y}(ng)\psi_{U_2, Q}(n)^{-1}dn  \int_{N^{\circ}(F)} W^{\circ}_{\rho_{\tau,s}}(w u\iota(g),a_y)\psi_1(u)dudg.
\]

Then by collapsing the integrals we get
\begin{align} \label{eq1}
I_s(y)=\int_{G(F)}  \Phi_{y}(g) \int_{N^{\circ}(F)} W^{\circ}_{\rho_{\tau,s}}(w u\iota(g),a_y)\psi_1(u)dudg. 
\end{align}
We justify this step by showing the above integral converges absolutely for $\Re(s)$ large in \cref{6.2} below.

Since $\Phi_{y} \in C^{\infty}(\overline{U_2}(F) \backslash G(F) / K_G )$, applying the Iwasawa decomposition with respect to the lower Borel subgroup $\overline{B}_G(F)$ of $G(F)$ we get 
\begin{align*} 
I_s(y) &=  \int_{T_G(F)} \Phi_{y}(g) \delta_{\overline{B}_G}^{-1}(g) \int_{\overline{U_2}(F)} \int_{N^{\circ}(F)}   W^{\circ}_{\rho_{\tau,s}}(w u\iota(\bar{n}g),a_y) \psi_1(u) dud\bar{n}dg\\
&= \int_{A_1(F)} \int_{M_1(F)} \int_{G_1(F)}  \Phi_{y}(amx) \delta_{\overline{B}_G}^{-1}(amx) \\
& \times \int_{\overline{U_2}(F)}  \int_{N^{\circ}(F)} W^{\circ}_{\rho_{\tau,s}}(w u\iota(\bar{n}amx),a_y) \psi_1(u) dud\bar{n}dadmdx \\
&= \int_{F^{\times}} \int_{F^{\times}} \int_{F^{\times}} \Phi_{y}(\begin{psmatrix}
a &  \\  & a^{-1}(mb)^{-1}
\end{psmatrix},\begin{psmatrix}
m &\\&b
\end{psmatrix}) |am|^{2}\\
& \times \int_{\overline{U_2}(F)}  \int_{N^{\circ}(F)} W^{\circ}_{\rho_{\tau,s}} \left(w u\iota(\bar{n}) \iota_2\begin{psmatrix}
am&&&\\&ab&&\\&&(ab)^{-1}&\\&&&(am)^{-1}
\end{psmatrix},a_y\right) \psi_1(u) dud\bar{n}d^{\times}ad^{\times}md^{\times}b\\
&= \int_{F^{\times}} H'_y(\begin{psmatrix}
a_1 &\\&a_1^{-1}
\end{psmatrix})|a_1|^{2} \int_{F^{\times}} \int_{F^{\times}} H_{1,y}(\begin{psmatrix}
1& \\&(mb)^{-1}
\end{psmatrix},\begin{psmatrix}
m& \\ & b
\end{psmatrix}) H_{2,y}(\begin{psmatrix}
1& \\&(mb)^{-1}
\end{psmatrix},\begin{psmatrix}
m& \\ & b
\end{psmatrix})|m|^{2}\\
& \times \int_{\overline{U_2}(F)}  \int_{N^{\circ}(F)} W^{\circ}_{\rho_{\tau,s}} \left(w u\iota(\bar{n}) \iota_2\begin{psmatrix}
a_1m&&&\\&a_1b&&\\&&(a_1b)^{-1}&\\&&&(a_1m)^{-1}
\end{psmatrix},a_y\right) \psi_1(u) dud\bar{n}d^{\times}a_1d^{\times}md^{\times}b.
\end{align*}

Let $I_F=[-\frac{2\pi i}{\mathrm{log}q}, \frac{2\pi i}{\mathrm{log}q}]$, $\sigma_1,\sigma_2 \in \mathbb{R}$, and
\begin{align} \label{c_q}
c_q=(\frac{\log q}{2\pi i})^2. 
\end{align}
Let 
\[
\chi_{s_1,s_2}\left(\begin{psmatrix} 1 & \\ & a_1 \end{psmatrix},\begin{psmatrix} a_2 & \\ & (a_1a_2)^{-1} \end{psmatrix} \right) = |a_1|^{s_1}|a_2|^{s_2}
\]
for $a_i\in F^\times $,
and
\[
H_{y,s_2}(1) = \int_{F^{\times2}} H_{2,y}\left( \begin{psmatrix} 1 & \\ & a_1 \end{psmatrix},\begin{psmatrix} a_2 & \\ & (a_1a_2)^{-1} \end{psmatrix}\right)\chi_{s_1,s_2}^{-1}\left( \begin{psmatrix} 1 & \\ & a_1 \end{psmatrix},\begin{psmatrix} a_2 & \\ & (a_1a_2)^{-1} \end{psmatrix}\right)d^{\times}a_1d^{\times}a_2.
\]

By Mellin inversion, for $\left(\begin{psmatrix}
1& \\& (mb)^{-1}
\end{psmatrix},\begin{psmatrix}
m& \\ & b
\end{psmatrix}\right) \in M_1(F)G_1(F)$, we have 
\[
H_{2,y}\left(\begin{psmatrix}
1& \\& (mb)^{-1}
\end{psmatrix},\begin{psmatrix}
m& \\ & b
\end{psmatrix}\right) = c_q \int_{iI_F +\sigma_1}\int_{iI_F+\sigma_2} H_{y,s_2}(1)\chi_{s_1,s_2}\left(\begin{psmatrix}
1& \\&m^{-1}b
\end{psmatrix},\begin{psmatrix}
m& \\ & b^{-1}
\end{psmatrix}\right) ds_1ds_2.
\]

Let 
\begin{align} \label{c_s_2}
c(q^{s_2},y_i) = 1-q^{s_2}+(q-1)q^{-\mathrm{val}(y_i)s_2}
\end{align}
for $i=1,2$.

We have 
\begin{align*}
& H_{y,s_2}(1) \\
&= \int_{F^{\times2}} H_{2,y}\left( \begin{psmatrix} 1 & \\ & a_1 \end{psmatrix},\begin{psmatrix} a_2 & \\ & (a_1a_2)^{-1} \end{psmatrix}\right)|a_1|^{-s_1}|a_2|^{-s_2} d^{\times}a_1d^{\times}a_2\\
&= \int_{F^{\times2}} \one_{\OO_F^{\times}}(a_1) \left(\one_{\varpi^{-\mathrm{val}(y_2)}\mathcal{O}_{F}}(a_2) - \sum_{k=1}^{\infty} q^{(\frac{d_2}{2}-2)k}(q-1) \one_{\varpi^{k-\mathrm{val}(y_2)}  \mathcal{O}_{F}}(a_2)\right) \\
& \times |a_1|^{-s_1} |a_2|^{-s_2} d^{\times}a_1d^{\times}a_2 \\
&= \int_{F^{\times}} \one_{\varpi^{-\mathrm{val}(y_2)}\mathcal{O}_{F}}(a_2)|a_2|^{-s_2} d^{\times}a_2 + (q-1) \sum_{k=1}^{\infty} q^{(\frac{d_2}{2}-2)k}\int_{F^{\times}} \one_{\varpi^{k-\mathrm{val}(y_2)}  \mathcal{O}_{F}}(a_2)|a_2|^{-s_2} d^{\times}a_2 \\
&= \sum_{i=-\mathrm{val}(y_2)}^{\infty}q^{is_2}+(q-1)\sum_{k=1}^{\infty}(\sum_{i=k-\mathrm{val}(y_2)}^{\infty}q^{is_2}) \\
&= c(q^{s_2},y_2)\zeta_v(-s_2)^2.
\end{align*}

We denote $\chi_{s_1,s_2}'= \chi_{s_1,s_2} H_{1,y} \delta_{\overline{B}_G}^{-1}$. 
Let
\begin{align*}
I_{s, s_1,s_2}(y) &:= \int_{F^{\times}} \int_{F^{\times}}  |m|^{-s_1+s_2+\frac{d_2}{2}}|b|^{-s_1} \\
&\times \int_{\overline{U_2}(F)}  \int_{N^{\circ}(F)} W^{\circ}_{\rho_{\tau,s}} \left(w u\iota(\bar{n}) \iota_2\begin{psmatrix}
m&&&\\&b&&\\&&b^{-1}&\\&&&m^{-1}
\end{psmatrix},a_y\right) \psi_1(u) dud\bar{n}d^{\times}md^{\times}b.
\end{align*}
Then 
\begin{align*}
I_s(y) &= c_q \int_{F^{\times}} H'_y\begin{psmatrix}
a_1 &\\&a_1^{-1}
\end{psmatrix}|a_1|^{-2}  \int_{F^{\times}} \int_{F^{\times}} \int_{iI_F +\sigma_1}\int_{iI_F+\sigma_2}H_{y,s_2}(1)\chi_{s_1,s_2}'\left(\begin{psmatrix}
1& \\&(mb)^{-1}
\end{psmatrix},\begin{psmatrix}
m& \\ & b
\end{psmatrix}\right)\\
& \times \int_{\overline{U_2}(F)}  \int_{N^{\circ}(F)} W^{\circ}_{\rho_{\tau,s}}\left(w u\iota(\bar{n}) \iota_2\begin{psmatrix}
a_1m&&&\\&a_1b&&\\&&(a_1b)^{-1}&\\&&&(a_1m)^{-1}
\end{psmatrix},a_y\right) \\
& \times \psi_1(u) dud\bar{n}d^{\times}a_1d^{\times}md^{\times}bds_1ds_2 \\
&= c_q\alpha(y_1, y_2) \int_{iI_F +\sigma_1}\int_{iI_F+\sigma_2} H_{y,s_2}(1) \int_{F^{\times}} H'_y\begin{psmatrix}
a_1 &\\&a_1^{-1}
\end{psmatrix}|a_1|^{-s_2} d^{\times}a_1 \int_{F^{\times}} \int_{F^{\times}}  |m|^{-s_1+s_2+\frac{d_2}{2}} \\
& \times  |b|^{-s_1} \int_{\overline{U_2}(F)}  \int_{N^{\circ}(F)}W^{\circ}_{\rho_{\tau,s}} \left(w u\iota(\bar{n}) \iota_2\begin{psmatrix}
m&&&\\&b&&\\&&b^{-1}&\\&&&m^{-1}
\end{psmatrix},a_y\right) \psi_1(u) dud\bar{n}d^{\times}md^{\times}bds_1ds_2\\
&= c_q\alpha(y_1, y_2) \int_{iI_F +\sigma_1}\int_{iI_F+\sigma_2} H_{y,s_2}(1) \int_{F^{\times}} H'_y\begin{psmatrix}
a_1 &\\&a_1^{-1}
\end{psmatrix}|a_1|^{-s_2} d^{\times}a_1 I_{s, s_1,s_2}(y)ds_1ds_2.
\end{align*}
We show in \cref{6.3} that $I_{s, s_1,s_2}$ converges when $\Re(s_1), \Re(-s_2)$ large and $s$ in some vertical strip in the right-half plane depends on $\Re(s_1),\Re(-s_2)$. 

We have now written $I_s(y)$ in terms of Whittaker functions as mentioned below \cref{basicfunction}. We now make use of the local functional equation for the local Rankin-Selberg integral on $\GL_1 \times \GL_n$ (see \cite[Theorem 2.7]{JPS}) to simplify $I_s(y).$  This requires the introdution of certain dual local integrals. Let
\[
r(a) = \begin{psmatrix}
0&1&0\\ 0&0&I_{n-2} \\ 1&0&a
\end{psmatrix} \in \GL_n(F). 
\]
We construct the dual integral 
\begin{align*}
I_s'(y) = \int_{T_G(F)} \Phi_{y}(g) \delta_{\overline{B}_G}^{-1}(g) \int_{\overline{U_2}(F)}  \int_{N^{\circ}(F)} \int_{M_{1\times (n-2)}(F)} W^{\circ}_{\rho_{\tau,s}}(w u\iota(\bar{n}g), r(a)a_y) \psi_1(u)dadud\bar{n}dg.
\end{align*}

Applying the same process as for $I_s$, we have that 
\begin{align*}
I_s'(y) &=  c_q\alpha(y_1, y_2) \int_{iI_F +\sigma_1}\int_{iI_F+\sigma_2} \int_{F^{\times}} H'_y\begin{psmatrix}
a_1 &\\&a_1^{-1}
\end{psmatrix}|a_1|^{-s_2} d^{\times}a_1 \int_{F^{\times}} \int_{F^{\times}}  |m|^{-s_1+s_2+\frac{d_2}{2}}|b|^{-s_1}   \\
& \times  \int_{\overline{U_2}(F)}  \int_{N^{\circ}(F)} \int_{M_{1\times (n-2)}(F)} W^{\circ}_{\rho_{\tau,s}} \left(w u\iota(\bar{n}) \iota_2\begin{psmatrix}
m&&&\\&b&&\\&&b^{-1}&\\&&&m^{-1}
\end{psmatrix},r(a)a_y\right) \\
& \times \psi_1(u) dadud\bar{n}d^{\times}md^{\times}bds_1ds_2.
\end{align*}

We denote
\begin{align*}
I'_{s, s_1,s_2}(y) & =\int_{F^{\times}} \int_{F^{\times}}  |m|^{-s_1+s_2+\frac{d_2}{2}}|b|^{-s_1}   \\
& \times \int_{\overline{U_2}(F)}  \int_{N^{\circ}(F)}  \int_{M_{1\times (n-2)}(F)} W^{\circ}_{\rho_{\tau,s}} \left(w u\iota(\bar{n}) \iota_2\begin{psmatrix}
m&&&\\&b&&\\&&b^{-1}&\\&&&m^{-1}
\end{psmatrix},r(a)a_y\right)\\
& \times \psi_1(u) dadud\bar{n}d^{\times}md^{\times}b.
\end{align*}

\begin{remark}
We show in \cref{6.4} that $I'_{s, s_1,s_2}$ converges when 
\begin{align}
\Re(s_1)+\Re(-s_2) \gg 1 \textrm{ and }
A(s_1,s_2) \geq \Re(s) \geq B(s_1,s_2)
\end{align}
for some $A(s_1,s_2)>B(s_1,s_2).$ The convergence region of $I'_{s, s_1,s_2}$ intersects the convergence region of $I_{s, s_1,s_2}$ when $\Re(-s_2)$ is in some vertical strip. 
\end{remark}

We now relate $I'_{s,s_1,s_2}$ to our integral $I_{s, s_1,s_2}$.

\begin{lemma}\label{4.2}
In the region of convergence of $I_{s, s_1,s_2}$ (see \cref{6.3}), we have 
\[
I_{s, s_1,s_2}(y) = \gamma(s-s_1+s_2+\frac{d_2+1}{2}, \tau)^{-1}I'_{s, s_1,s_2}(y).
\]
Here $\gamma(s-s_1+s_2+\frac{d_2+1}{2}, \tau)$ is the $\GL_1\times \GL_n$ gamma factor defined in \cite[Theorem 2.7]{JPS}.
\end{lemma}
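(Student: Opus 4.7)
The plan is to recognize the identity as a consequence of the local $\GL_1\times\GL_n$ Rankin--Selberg functional equation of Jacquet--Piatetski-Shapiro--Shalika applied to an inner integral against the $\GL_n$-argument of $W_{\rho_{\tau,s}}$, exactly in the spirit of \cite{Kap}.

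First I would use the $Q_n(F)$-equivariance of $W_{\rho_{\tau,s}}$ to commute $\iota_2(\mathrm{diag}(m,b,b^{-1},m^{-1}))$ (which, up to a Weyl conjugation by $w$, lies in the Levi $M_n$) through the first slot of the Whittaker function, at the cost of acting on the second ($\GL_n$) slot by a diagonal element. Combined with the characters $|m|^{-s_1+s_2+d_2/2}|b|^{-s_1}$ and the $s$-twist built into $\rho_{\tau,s}$, the double integral in $(m,b)$ collapses to a single integral over $F^\times$ against $W_{\tau,s}(\mathrm{diag}(t,I_{n-1})\,a_y)$, where the residual parameter tracks as $s' = s - s_1 + s_2 + \tfrac{d_2+1}{2}$; this shift is exactly the combined contribution of the $s$-twist, the input characters, and the modular quasicharacter of the relevant parabolic in $\GL_n$.

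With this reduction, the inner integral is precisely the local $\GL_1\times\GL_n$ zeta integral of \cite{JPS} at parameter $s'$, say $\Psi(s',W_{\tau,s},a_y)$, where the outer integrations over $\overline{U_2}(F)$ and $N^\circ(F)$ (carrying $\psi_1$) are parameters that do not interact with the functional equation. The local functional equation
\begin{align*}
\Psi(s',W_{\tau,s},a_y) \;=\; \gamma(s',\tau)^{-1}\,\widetilde\Psi(s',W_{\tau,s},a_y)
\end{align*}
then applies, where $\widetilde\Psi$ is the dual zeta integral obtained by acting with the $\GL_n$ Weyl element that moves $t$ from the $(1,1)$-entry to the last row together with the corresponding unipotent integration over $M_{1\times(n-2)}(F)$. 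Unwinding the reduction in reverse, the element produced is exactly $r(a)a_y$ and the unipotent integration is exactly the integration over $a$ appearing in $I'_{s,s_1,s_2}(y)$, so the right-hand side becomes $\gamma(s',\tau)^{-1}\,I'_{s,s_1,s_2}(y)$.

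The main obstacle will be the bookkeeping in the first step: explicitly commuting $\iota_2(\mathrm{diag}(m,b,b^{-1},m^{-1}))$ past $wuy$, verifying that all pieces not lying in the $\GL_n$-Levi are absorbed by the characters $\psi_1$ and by the $N_n$-equivariance of $W_{\rho_{\tau,s}}$, and confirming that the outer integrations over $\overline{U_2}(F)$ and $N^\circ(F)$ on both sides of the putative identity match term by term. Once this packaging is completed and the convergence regions overlap as noted in \cref{6.3} and \cref{6.4}, the lemma is a direct application of the \cite{JPS} functional equation fiberwise in the outer variables, extended to a region of convergence common to $I_{s,s_1,s_2}$ and $I'_{s,s_1,s_2}$ by meromorphic continuation.
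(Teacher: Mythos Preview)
Your overall strategy—reduce to a $\GL_1\times\GL_n$ zeta integral in the second argument of $W_{\rho_{\tau,s}}$ and apply the JPS functional equation—is exactly right, and it is what underlies the paper's argument. But two of the specific claims in your reduction are incorrect, and fixing them is precisely the content of the results from \cite{Kap} that the paper invokes.

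First, the assertion that $\iota_2(\mathrm{diag}(m,b,b^{-1},m^{-1}))$ lies in the Levi $M_n$ after conjugation by $w$ is false for the $b$-component. By \cref{lem2}, the image of $G_1$ under $\iota$ is a copy of $\SO_2$ sitting inside the $\SO_3$-block of $\SO_5\subset H$, not a diagonal torus element; it does not conjugate into $M_n$. Only the $m$-component (via \cref{lem1}) lands in the torus after $w$-conjugation. Consequently the double $(m,b)$-integral does not ``collapse to a single integral over $F^\times$''; rather the $b$-integral stays as a genuine outer $\SO_2$-integration alongside $\overline{U_2}$ and $N^\circ$, and only the $m$-integral becomes a zeta integral.

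The paper sidesteps this by packaging everything as an instance of Kaplan's setup: it identifies $I_{s,s_1,s_2}$ and $I'_{s,s_1,s_2}$ with the integrals $I_1'$ and $I_2'$ arising from a section $\varphi_\zeta\in\mathrm{Ind}_{\overline{B}_{G'}}^{G'}(\chi\otimes\mu)$ (with $\chi_1=|\cdot|^{-s_1+s_2+d_2/2}$, $\chi_2=|\cdot|^{-s_1}$), cites \cite[Proof of Lemma 4.1]{Kap} for the nontrivial rewriting $I_1=I_1'$, $I_2=I_2'$ that moves the $m$-variable into the $\GL_n$-slot, and then cites \cite[Claim 4.1]{Kap} for the functional equation $I_1=\gamma(s-\zeta,\chi_1\otimes\tau)I_2$. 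Your proposal would amount to reproving those two facts from scratch; if you want to do it directly, the key step you are missing is separating the $m$- and $b$-variables and showing that the $b$-part (and the $\overline{U_2}$, $N^\circ$ integrations) really are spectators to the JPS identity applied in $m$ alone.
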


\begin{proof}
We let $\chi_1, \chi_2$ be characters on $F^{\times}$ such that $\chi_1=|\cdot|^{-s_1+s_2+\frac{d_2}{2}}$, $\chi_2=|\cdot|^{-s_1}$. 

Let $\mu$ be a character of $\SO_2(F)$, and let $\pi_{\zeta}=\mathrm{Ind}_{\overline{B}_{G'}(F)}^{G'(F)}(\chi \otimes \mu)$. Let $\varphi_{\zeta}(g,m,I_2) \in V_{\pi_{\zeta}}$.
Let 
\begin{multline*}
I_1 = \int_{\GL_1(F) \backslash G'(F)} \int_{N^{\circ}(F)} \int_{\GL_1(F)} \varphi_{\zeta}(g,m,I_2)\\
\times W^{\circ}_{\rho_{\tau,s}}(w u\iota_2(g),\mathrm{diag}(m,I_{n-1}))|\mathrm{det}m|^{s-\zeta-\frac{n-1}{2}}\psi_1(u)dmdudg,
\end{multline*}
and
\begin{multline*}
I_2 = \int_{\GL_1(F) \backslash G'(F)} \int_{N^{\circ}(F)} \int_{\GL_1(F)} \varphi_{\zeta}(g,m,I_2) \\
\times \int_{M_{1\times (n-2)}(F)} W^{\circ}_{\rho_{\tau,s}}(w u\iota_2(g), r(a)) |\mathrm{det}m|^{s-\zeta-\frac{n-1}{2}}\psi_1(u)dadmdudg. 
\end{multline*}

By \cite[Claim 4.1]{Kap}, one has
\[
I_1 = \gamma(s-\zeta, \chi_1 \otimes \tau) I_2.
\]

Let 
\[
I_1' = \int_{G'(F)} \varphi_{\zeta}(g,I_1,I_2)   \int_{N^{\circ}(F)}W^{\circ}_{\rho_{\tau,s}}(w u\iota(g),1) \psi_1(u) dudydg,
\]
and
\[
I_2' = \int_{G'(F)} \varphi_{\zeta}(g,I_1,I_2)  \int_{N^{\circ}(F)} \int_{M_{1\times (n-2)}(F)} W^{\circ}_{\rho_{\tau,s}}(w u\iota(g), r(a)) \psi_1(u)dadudydg.
\]

By \cite[Proof of Lemma 4.1]{Kap}, one has 
\[
I_1 = I_1', I_2=I_2'.
\]

By the definition of $\varphi_{\xi}$ as element of an induced representation
 and since we are in the unramified case, we have 
\begin{align*}
I_1' &= \int_{T_{G'}(F)} \chi(g)  \int_{\overline{U_2}(F)}  \int_{N^{\circ}(F)}W^{\circ}_{\rho_{\tau,s}}(w u\iota(\bar{n}g),1) \psi_1(u) dudydg\\
&= \int_{F^{\times}} \int_{F^{\times}} \chi_1(m)\chi_2(b)|m|^{-\frac{1}{2}} \int_{\overline{U_2}(F)}  \int_{N^{\circ}(F)} W^{\circ}_{\rho_{\tau,s}} \left(w u\iota(\bar{n}) \iota_2\begin{psmatrix}
m&&&\\&b&&\\&&b^{-1}&\\&&&m^{-1}
\end{psmatrix},1\right) \\
&\times \psi_1(u) dud\bar{n}d^{\times}md^{\times}b
\end{align*}
and
\begin{align*}
I_2' &= \int_{T_{G'}(F)} \chi(g)  \int_{\overline{U_2}(F)}  \int_{N^{\circ}} \int_{M_{1\times (n-2)}(F)} W^{\circ}_{\rho_{\tau,s}}(w u\iota(\bar{n}g), r(a)) \psi_1(u)dadud\bar{n}dg \\
&=\int_{F^{\times}} \int_{F^{\times}} \chi_1(m)\chi_2(b)|m|^{-\frac{1}{2}}   \\
& \times \int_{\overline{U_2}(F)}  \int_{N^{\circ}(F)}  \int_{M_{1\times (n-2)}(F)}  W^{\circ}_{\rho_{\tau,s}} \left(w u\iota(\bar{n}) \iota_2\begin{psmatrix}
m&&&\\&b&&\\&&b^{-1}&\\&&&m^{-1}
\end{psmatrix},r(a)\right) \psi_1(u) dadud\bar{n}d^{\times}md^{\times}b.
\end{align*}
Here
\[
\chi \begin{psmatrix}
m&&& \\ &b&& \\ &&b^{-1}& \\ &&&m^{-1}
\end{psmatrix} = \chi_1(m)\chi_2(b)|m|^{-\frac{1}{2}},
\]
where $\chi_1, \chi_2$ are characters on $F^{\times}$.

Then we have
\[
I_1' = \gamma(s-\zeta, \chi_1 \otimes \tau)I_2'.
\]
\end{proof}

Inserting the result to our local integral $I_s$ we get
\begin{align*}
I_s(y) &=  c_q\alpha(y_1, y_2) \int_{iI_F +\sigma_1}\int_{iI_F+\sigma_2} H_{y,s_2}(1) \int_{F^{\times}} H'_y\begin{psmatrix}
a_1 &\\&a_1^{-1}
\end{psmatrix}|a_1|^{-s_2} d^{\times}a_1 \\
& \times \gamma(s-s_1+s_2, \chi'\otimes \tau)^{-1} I'_{s, s_1,s_2}ds_1ds_2\\
&= c_q\alpha(y_1, y_2)\int_{iI_F +\sigma_1}\int_{iI_F+\sigma_2}\frac{ c(q^{s_2},y_1)\zeta_v(-s_2-\frac{d_1}{2}+2)^2H_{y,s_2}(1)}{ \gamma(s-s_1+s_2, \chi'\otimes \tau)} I'_{s, s_1,s_2}ds_1ds_2.
\end{align*}

\begin{remark}
Since $c(q^{s_2},y_1)\zeta_v(-s_2-\frac{d_1}{2}+2)^2H_{y,s_2}(1)$ converges when
\[
\Re(-s_2) > \frac{d_1}{2}+3,
\]
\[
c(q^{s_2},y_1) \zeta_v(-s_2-\frac{d_1}{2}+2)^2H_{y,s_2}(1) \gamma(s-s_1+s_2, \chi'\otimes \tau)^{-1} I'_{s, s_1,s_2}
\]
converges when 
\[
\max(\frac{d_1}{2}+3, -n+\frac{d_2}{2}+8+c_1) < \Re(-s_2) < n+\frac{d_2}{2}+2+c_1+c_2,
\]
$\Re(s_1)+\Re(-s_2)$ large, and $\Re(s)$ bounded in some vertical strip depends on $\Re(s_1)+\Re(-s_2)$ as in \cref{6.3}.
Since we set $d_2>d_1$ the region for $\Re(-s_2)$ is non-empty. 
\end{remark}

To compute our final result, we remain to compute $I'_{s,s_1,s_2}(y)$. By the properties of $W^{\circ}_{\rho_{\tau,s}}$, we have
\begin{align*}
I'_{s, s_1,s_2}(y) & = |4\mathcal{Q}'(y_2)|^{-\Re(s)+\frac{1}{2}-\frac{n}{2}} \int_{F^{\times}} \int_{F^{\times}}  |m|^{-s_1+s_2+\frac{d_2}{2}}|b|^{-s_1}   \\
& \times \int_{\overline{U_2}(F)}  \int_{N^{\circ}(F)}  \int_{M_{1\times (n-2)}(F)} W^{\circ}_{\rho_{\tau,s}} \left(r(a)w u\iota(\bar{n}) \iota_2\begin{psmatrix}
-4\mathcal{Q}'(y_2)m&&&\\&b&&\\&&b^{-1}&\\&&&(-4\mathcal{Q}'(y_2)m)^{-1}
\end{psmatrix},1\right)\\
& \times \psi_1(u) dadud\bar{n}d^{\times}md^{\times}b.
\end{align*}

Since $\overline{U_2}(F)$ normalizes $N^{\circ}(F)$ and $\psi_1$, we can interchange $u\iota(\bar{n})$ to $\iota(\bar{n})u$ in the integral. Denoting $\,^{(r(a)w_0)^{-1}}(\iota(\bar{n})u)=(r(a)w_0)(\iota(\bar{n})u)(r(a)w_0)^{-1}$, we have that
\begin{align*}
& I'_{s, s_1,s_2}(y) \\
&=|4\mathcal{Q}'(y_2)|^{-\Re(s)+\frac{1}{2}-\frac{n}{2}} \int_{F^{\times}} \int_{F^{\times}}  |m|^{-s_1+s_2+\frac{d_2}{2}}|b|^{-s_1}  \\
& \times  \int_{\overline{U_2}(F)}  \int_{N^{\circ}(F)}  \int_{M_{1\times (n-2)}(F)} W^{\circ}_{\rho_{\tau,s}} \left(\,^{(r(a)w_0)^{-1}}(\iota(\bar{n})u) r(a)w  \iota_2\begin{psmatrix}
-4\mathcal{Q}'(y_2)m&&&\\&b&&\\&&b^{-1}&\\&&&(-4\mathcal{Q}'(y_2)m)^{-1}
\end{psmatrix},1\right) \\
& \times \psi_1(u) dadud\bar{n}d^{\times}md^{\times}b.
\end{align*}

Let $\tilde{w_0} = \left( \begin{array}{ccc}
 &  & I_n\\
& (-1)^n & \\
I_n &  & 
\end{array} \right) $ and let $V$ be the subgroup  
\[
V(R) = \left\{ \left( \begin{array}{ccccc}
I_{n-1} & 0& y_2& y_3&s \\
&1&0&0&y_3' \\
&&1&0&y_2' \\
&&&1&0\\
&&&&I_{n-1}
\end{array} \right) \in H(R) \right\}.
\]

By \cite[Claim 4.2]{Kap}, denoting $\,^{\tilde{w_0}}V=(\tilde{w_0})^{-1}V\tilde{w_0}$ we have
\begin{align*} 
& I'_{s, s_1,s_2}(y) \\
&= |4\mathcal{Q}'(y_2)|^{-\Re(s)+\frac{1}{2}-\frac{n}{2}} \int_{F^{\times}} \int_{F^{\times}} |m|^{-s_1+s_2+\frac{d_2}{2}}|b|^{-s_1} \\
& \times\int_{M_{1\times (n-2)}(F)}  \int_{\,^{\tilde{w_0}}V(F)} W^{\circ}_{\rho_{\tau,s}}\left(vu(a)w \iota_2\begin{psmatrix}
-4\mathcal{Q}'(y_2)m&&&\\&b&&\\&&b^{-1}&\\&&&(-4\mathcal{Q}'(y_2)m)^{-1}
\end{psmatrix}, 1\right)\psi_1(u) \\ 
& \times dvdad^{\times}md^{\times}b \\
&= |4\mathcal{Q}'(y_2)|^{-\Re(s)+\frac{1}{2}-\frac{n}{2}} \sum_{k\in \Z}  |\varpi^{k}|^{-s_1+s_2+\frac{d_2}{2}}  \\
&\times  \int_{M_{1\times (n-2)}(F)}\int_{\SO_2(F)} \int_{\,^{\tilde{w_0}}V(F)} W^{\circ}_{\rho_{\tau,s}}\left(vu(a)w x \iota_2\begin{psmatrix}
-4\mathcal{Q}'(y_2)\varpi^{k}&&&\\&1&&\\&&1&\\&&&(-4\mathcal{Q}'(y_2)\varpi^{k})^{-1}
\end{psmatrix}, 1\right)\\
& \times \mu_{s_1}(x)\psi_1(u)dvdxda.
\end{align*}
Here $\mu_{s_1}(x)=|b|^{-s_1}$ is a character for $x\in \iota(G_1(F)) \cong \SO_2(F)$ (see \cref{lem2}). 

As in \cite[Page 161]{Kap}, by the structure of $x\in \iota(G_1(F))$, we have $\,^{(u(a)w_0)^{-1}}x=\,^{w' \tilde{w_0}}x$, where $w'=\mathrm{diag}(I_n, -1, I_n)$. Thus we have
\begin{align*}
& I'_{s, s_1,s_2}(y) \\
& = |4\mathcal{Q}'(y_2)|^{-\Re(s)+\frac{1}{2}-\frac{n}{2}} \sum_{k\in \Z}|\varpi^{k}|^{-s_1+s_2+\frac{d_2}{2}} \\
& \times \int_{M_{1\times (n-2)}(F)}\int_{\,^{w' \tilde{w_0}}\SO_2(F)} \int_{\,^{\tilde{w_0}}V(F)}W^{\circ}_{\rho_{\tau,s}}\left(vxu(a)w \iota_2\begin{psmatrix}
-4\mathcal{Q}'(y_2)\varpi^{k}&&&\\&1&&\\&&1&\\&&&(-4\mathcal{Q}'(y_2)\varpi^{k})^{-1}
\end{psmatrix}, 1\right)\\
& \times \mu_{s_1}(x)\psi_1(u)dvdxda.
\end{align*}

We regard the $dvdx$ integral over $\,^{\tilde{w_0}}V(F) \times \,^{w' \tilde{w_0}}\SO_2(F)$ as a function on $H(F)$:
\begin{align} \label{Bessel}
B_{W_{\rho_{\tau,s}}}(h):=\int_{\,^{w' \tilde{w_0}}\SO_2(F)} \int_{\,^{\tilde{w_0}}V(F)} W^{\circ}_{\rho_{\tau,s}}(vxh, 1)\psi_1(v)\mu_{s_1}(x)dvdx.
\end{align}

Let 
\[
\psi'_1=\psi\left(\sum_{i=1}^{n-1}v_{i,i+1}+\frac{1}{2}v_{2n,1}\right) \quad (v\in \,^{\tilde{w_0}}Z_n(F) \ltimes \,^{\tilde{w_0}}V(F))
\]
be a character. 

Similar to the Bessel function in \cite[Page 162]{Kap}, our function $B_{W^{\circ}_{\rho_{\tau,s}}}$ is an unramified Bessel function which corresponds to the Bessel functional defined for the subgroup 
\[
(\,^{\tilde{w_0}}Z_n(F) \ltimes \,^{\tilde{w_0}}V(F)) \rtimes \,^{w' \tilde{w_0}}\SO_2(F) = \,^{\tilde{w_0}}R^{\circ}(F)
\]
(for $\,^{w' \tilde{w_0}}\SO_2(F)$ split) and representations $\rho_{\tau,s}$, $\psi'_1$ and $\mu_s$.

For $\delta=(\delta_1, \dots, \delta_n)\in \Z^n$, we denote
\[
\varpi_{H}^{\delta} = \mathrm{diag}(\varpi^{\delta_1}, \dots, \varpi^{\delta_n},1,\varpi^{-\delta_1}, \dots, \varpi^{-\delta_n})\in T_H(F).
\]

Substituting the function to our local integral we get
\[
I_{s, s_1,s_2}'(y) = \sum_{k\in \Z}  |\varpi^{k}|^{-s_1+s_2+\frac{d_2}{2}} \int_{M_{1\times (n-2)}(F)} B_{W^{\circ}_{\rho_{\tau,s}}} (\tilde{w_0} u(a)w \varpi_H^{\tilde{\delta}_{k+\mathrm{val}(4\mathcal{Q}'(y_2))}})da, 
\]
where $\tilde{\delta}_k=(0_{n-2},k,0)\in \Z^n$. 

We have the following equality using the same argument as that proving \cite[Claim 4.3]{Kap}:

\begin{lemma}
\[
\int_{M_{1\times (n-2)}(F)}B_{W^{\circ}_{\rho_{\tau,s}}}(\tilde{w_0} u(a)w \varpi_H^{\tilde{\delta}_{k}})da =B_{W^{\circ}_{\rho_{\tau,s}}}(\varpi_H^{\delta_{-k}})|\varpi^k|^{n-2},
\]
where $\delta_{k}=(k,0_{n-1})\in \Z^n$. 
\end{lemma}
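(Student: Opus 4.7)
The plan is to follow the argument of \cite[Claim 4.3]{Kap} nearly verbatim, since $B_{\psi'_1,s_1}$ is defined to be the Bessel function with precisely the equivariance properties invoked there: it is right $K_H$-invariant (because $W_{\rho_{\tau,s}}$ is spherical), transforms by $\psi'_1$ under left translation by $\,^{\tilde{w_0}}Z_n(F)\ltimes \,^{\tilde{w_0}}V(F)$, and by $\mu_{s_1}$ under left translation by $\,^{w'\tilde{w_0}}\mathrm{SO}_2(F)$. These are the only inputs.

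First, I would perform the matrix computation expressing the interaction of $u(a)$ with $w$ and the diagonal $\varpi_H^{\tilde{\delta}_k}$. The element $u(a)$ lies in a root subgroup of $H=\SO_{2n+1}$ parametrised by $a\in M_{1\times(n-2)}(F)$; conjugation by $w\varpi_H^{\tilde{\delta}_k}$ moves it to a new unipotent subgroup and rescales each of the $n-2$ coordinates of $a$ by a power of $\varpi^k$. Combined with the outer $\tilde{w_0}$, an Iwasawa decomposition with respect to a parabolic whose unipotent radical lies inside $\,^{\tilde{w_0}}R^\circ$ yields, after the substitution $a\mapsto \varpi^{-k}a$, a factorisation
\[
\tilde{w_0}\,u(a)\,w\,\varpi_H^{\tilde{\delta}_k} \;=\; r(a)\cdot \varpi_H^{\delta_{-k}}\cdot \kappa(a),
\]
with $r(a)\in \,^{\tilde{w_0}}R^\circ(F)$ and $\kappa(a)\in K_H$, and the Jacobian of the substitution contributes $|\varpi^k|^{n-2}$.

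Next I would apply the equivariance properties of $B_{\psi'_1,s_1}$: right $K_H$-invariance kills $\kappa(a)$, and left equivariance under $\,^{\tilde{w_0}}R^\circ(F)$ reduces the integrand to $B_{\psi'_1,s_1}(\varpi_H^{\delta_{-k}})$ multiplied by a character evaluated on $r(a)$. The central point, exactly as in \cite[Claim 4.3]{Kap}, is that the component of $r(a)$ in the direction on which $\psi'_1$ acts non-trivially is either trivial or the resulting exponential integral $\int_{M_{1\times(n-2)}(F)}\psi(\cdots)\,da$ collapses to the characteristic function of the origin, leaving a clean factor of $1$. Combined with the Jacobian computed above, this produces the claimed $B_{\psi'_1,s_1}(\varpi_H^{\delta_{-k}})|\varpi^k|^{n-2}$.

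The main obstacle is the bookkeeping in $\SO_{2n+1}$: verifying the explicit factorisation displayed above and confirming that the relevant coordinates of $r(a)$ that appear after conjugation fall into root directions on which $\psi'_1$ is either trivial or integrates away cleanly. This is the content of the matrix manipulations in \cite[Claim 4.3]{Kap}, and once these commutator/Iwasawa identities are verified in our $\iota$ and choice of $w$ conventions, the identity follows.
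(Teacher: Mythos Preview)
Your proposal is correct and takes essentially the same approach as the paper: the paper gives no proof beyond the sentence ``We have the following equality using the same argument as that proving \cite[Claim 4.3]{Kap}'' followed by a \qed, and your outline is precisely an expansion of what that citation entails (the factorisation via $K_H$-invariance and the $\,^{\tilde{w_0}}R^\circ$-equivariance of $B_{\psi'_1,s_1}$, together with the Jacobian $|\varpi^k|^{n-2}$ from the substitution on $M_{1\times(n-2)}(F)$).
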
\qed

Thus we get
\[
I_{s, s_1,s_2}'(y) = |4\mathcal{Q}'(y_2)|^{-\Re(s)+\frac{1}{2}-\frac{n}{2}} \sum_{k\in \Z}  q^{-(-s_1+s_2+n-2+\frac{d_2}{2})k}  B_{W^{\circ}_{\rho_{\tau,s}}}(\varpi_H^{\delta_{-k-\mathrm{val}(4\mathcal{Q'}(y_2))}}).
\]
Since $B_{\psi'_1, s_1}$ is an unramified Bessel function, by the vanishing condition of unramified Bessel function, $B_{\psi'_1, s_1}(\varpi_H^{\delta_{-k-\mathrm{val}(4\mathcal{Q'}(y_2)}})=0$ unless $k \leq -\mathrm{val}(4\mathcal{Q'}(y_2))$. We have 
\begin{align*}
I_{s, s_1,s_2}'(y) = |4\mathcal{Q}'(y_2)|^{-\Re(s)+\frac{1}{2}} \sum_{k\leq -\mathrm{val}(4\mathcal{Q'}(y_2))}  q^{-(-s_1+s_2+n-2+\frac{d_2}{2})k}  B_{W^{\circ}_{\rho_{\tau,s}}}(\varpi_H^{\delta_{-k-\mathrm{val}(4\mathcal{Q'}(y_2))}}).
\end{align*}

We proceed to state the main theorem of this section. Consider
\begin{align} \label{Laurent}
\frac{\prod_{i=1}^2c(q^{s_2},y_i)\zeta_v(-s_2-\frac{d_1}{2}+2)^2\zeta_v(-s_2)^2  }{\gamma(s-s_1+s_2, \chi'\otimes \tau)}B_{W^{\circ}_{\rho_{\tau,s}}}(\varpi_H^{\delta_{k-\mathrm{val}(4\mathcal{Q}'(y_2))}})
\end{align}
as a product of Laurent series in $q^{s_1}$ and $q^{s_2}$, where $c(q^{s_2},y_i)$ are those defined in \cref{c_s_2}.  Let
\begin{align}\begin{split} \label{q-coefficient}
&C_{k,s}(y):= \\
&c_q\int_{iI_F +\sigma_1}\int_{iI_F+\sigma_2}q^{(-s_1+s_2)k}\frac{\prod_{i=1}^2c(q^{s_2},y_i)\zeta_v(-s_2-\frac{d_1}{2}+2)^2\zeta_v(-s_2)^2}{\gamma(s-s_1+s_2, \chi'\otimes \tau)}B_{W^{\circ}_{\rho_{\tau,s}}}(\varpi_H^{\delta_{k-\mathrm{val}(4\mathcal{Q}'(y_2))}}),
\end{split}
\end{align}
where $c_q$ is as defined in \cref{c_q}. This is nothing but the product of the $-k$-th coefficient in $q^{s_1}$ and the $k$-th coefficient in  $q^{s_2}$ of \eqref{Laurent}.

\begin{theorem}\label{thm2}
For all the data unramified and $\Re(s)$ large, we have
\begin{align*}
I_s(y)  &= \alpha(y_1, y_2)|4Q'(y_2)|^{-\Re(s)+\frac{1}{2}-\frac{n}{2}} \sum_{k=\mathrm{val}(4\mathcal{Q'}(y_2))}^{\infty}  q^{(n-2+\frac{d_2}{2})k} C_{k,s}(y),
\end{align*}
where $\alpha(y_1, y_2)$ is as defined in \cref{c_k}. 
\end{theorem}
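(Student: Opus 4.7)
The plan is to assemble the chain of computations developed throughout \cref{4}. Since the theorem simply packages the running formula for $I_s(y)$, the proof amounts to reading off the final expression once all the intermediate identities are in place, and then verifying that the resulting contour-integral expression really does extract the two Laurent coefficients defining $C_{k,s}(y)$.

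First, I would apply \cref{4.1} to rewrite $\rho(g)f(y,1)$ as an integral of $\Phi_y$ twisted by $\psi_{U_2,\mathcal{Q}}^{-1}$, and collapse with the outer $U_2(F)\backslash G(F)$ integral to obtain the ``unfolded'' form
\[
I_s(y)=\int_{G(F)}\Phi_{y}(g)\int_{N^{\circ}(F)} W_{\rho_{\tau,s}}(w u\iota(g),a_y)\psi_1(u)\,du\,dg.
\]
Next, I would apply the Iwasawa decomposition relative to the lower Borel $\overline{B}_G(F)$ using the factorization $T_G=A_1 M_1 G_1$, and then use Mellin inversion on $H_{2,y}$ against $\chi_{s_1,s_2}$ to produce a double contour integral. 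The inner $a_1$-integral evaluates to $\zeta_v(-s_2-\tfrac{d_1}{2}+2)^2$ by a direct geometric-series computation, and $H_{y,s_2}(1)=c(q^{s_2},y_2)\zeta_v(-s_2)^2$ takes care of the $H_{2,y}$ transform. This packages $I_s(y)$ as a contour integral whose integrand contains $I_{s,s_1,s_2}(y)$.

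The key analytic input is \cref{4.2}, which rewrites $I_{s,s_1,s_2}(y)=\gamma(s-s_1+s_2,\chi'\otimes\tau)^{-1}I'_{s,s_1,s_2}(y)$ and absorbs the intertwining on $\GL_1\times\GL_n$ into a gamma factor. To compute $I'_{s,s_1,s_2}(y)$ I would follow Kaplan's argument in \cite[\S 4]{Kap}: use the $\GL_n$-equivariance of $W_{\rho_{\tau,s}}$ to pull out $|4\mathcal{Q}'(y_2)|^{-\Re(s)+\frac{1}{2}-\frac{n}{2}}$, swap the order $uy\leftrightarrow yu$ using that $\overline{U_2}$ normalizes $N^{\circ}$ and preserves $\psi_1$, and then conjugate by $\tilde{w_0}$ to realize the combined $\overline{U_2}\times N^{\circ}\times M_{1\times(n-2)}$ integral as an integral over the Bessel subgroup $\,^{\tilde{w_0}}R^{\circ}(F)$. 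Kaplan's conjugation identity (the analogue of his Claim 4.3) then reduces the answer to the normalized unramified Bessel function $B_{\psi'_1,s_1}$ evaluated at the torus elements $\varpi_H^{\delta_{-k-\mathrm{val}(4\mathcal{Q}'(y_2))}}$.

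Finally, the vanishing property of unramified Bessel functions on $H$ forces the sum to truncate to $k\geq\mathrm{val}(4\mathcal{Q}'(y_2))$ (after reindexing the sign). Recognizing $c_q$ times the contour integral as the extraction of the $(-k,k)$-th coefficient of the Laurent series \cref{Laurent} then produces $C_{k,s}(y)$ as in \cref{q-coefficient}, yielding the stated identity. The main obstacle I expect is bookkeeping the overlapping regions of absolute convergence: Mellin inversion, the functional equation of \cref{4.2}, and the Bessel reduction each require their own vertical strip in $(s_1,s_2)$ with $\Re(s)$ large, so one must exhibit a common region on which the entire chain of manipulations is simultaneously justified --- a verification deferred to \cref{6}.
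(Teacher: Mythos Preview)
Your proposal is correct and follows essentially the same approach as the paper's own proof: the theorem is indeed a packaging of the chain of identities developed throughout the section, and the paper's proof simply combines the Mellin inversion formula for $I_s(y)$, \cref{4.2}, and the Bessel-function reduction of $I'_{s,s_1,s_2}(y)$, then interchanges sum and contour integral (with the convergence justification deferred to \cref{6.5}) to recognize $C_{k,s}(y)$. Your outline captures all of these steps accurately, including the bookkeeping concern about overlapping convergence regions.
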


\begin{proof}
Combining all the above results we get 
\begin{align*}
& I_s(y) \\
&= c_q\alpha(y_1, y_2)\int_{iI_F +\sigma_1}\int_{iI_F+\sigma_2} \frac{c(q^{s_2},y_1)\zeta_v(-s_2-\frac{d_1}{2}+2)^2H_{y,s_2}(1)}{\gamma(s-s_1+s_2, \chi'\otimes \tau)}  I'_{s,s_1,s_2}ds_1ds_2 \\ 
&=c_q \alpha(y_1, y_2)\int_{iI_F +\sigma_1}\int_{iI_F+\sigma_2} \frac{\prod_{i=1}^2c(q^{s_2},y_i)\zeta_v(-s_2-\frac{d_1}{2}+2)^2\zeta_v(-s_2)^2}{\gamma(s-s_1+s_2, \chi'\otimes \tau)}  \\
& \times  \sum_{k=\mathrm{val}(4\mathcal{Q'}(y_2))}^{\infty}  q^{(-s_1+s_2+n-2+\frac{d_2}{2})k} B_{W^{\circ}_{\rho_{\tau,s}}}(\varpi_H^{\delta_{k-\mathrm{val}(4\mathcal{Q'}(y_2))}})ds_1ds_2  \\
&=c_q\alpha(y_1, y_2) \sum_{k=\mathrm{val}(4\mathcal{Q'}(y_2))}^{\infty}  q^{(n-2+\frac{d_2}{2})k} \int_{iI_F +\sigma_1}\int_{iI_F+\sigma_2} \frac{\prod_{i=1}^2c(q^{s_2},y_i)\zeta_v(-s_2-\frac{d_1}{2}+2)^2\zeta_v(-s_2)^2}{\gamma(s-s_1+s_2, \chi'\otimes \tau)}   \\
& \times q^{(-s_1+s_2)k}B_{W^{\circ}_{\rho_{\tau,s}}}(\varpi_H^{\delta_{k-\mathrm{val}(4\mathcal{Q'}(y_2))}})ds_1ds_2.
\end{align*}
 
We will make the above manipulations rigorous in \cref{6.5} by showing that the infinite sum
\[
\frac{\prod_{i=1}^2c(q^{s_2},y_i)\zeta_v(-s_2-\frac{d_1}{2}+2)^2\zeta_v(-s_2)^2} {\gamma(s-s_1+s_2, \chi'\otimes \tau)}  \sum_{k=\mathrm{val}(4\mathcal{Q'}(y_2))}^{\infty} q^{(-s_1+s_2+n-2+\frac{d_2}{2})k} B_{W^{\circ}_{\rho_{\tau,s}}}(\varpi_H^{\delta_{k-\mathrm{val}(4\mathcal{Q'}(y_2))}})
\]
converges absolutely for  $\Re(s_1), \Re(-s_2)$ large and $\Re(s)$ lies in some region in the right plane that depends on $\Re(s_1), \Re(-s_2)$.
\end{proof}

We give an explicit expression for the unramified Bessel function $B_{W^{\circ}_{\rho_{\tau,s}}}$ following \cite{Kap} below.  Let 
\begin{align}
    B^\circ
\end{align}
be the unique Bessel function corresponding to the data $\rho_{\tau,s}$, $\psi'_1$ and $\mu_s$ such that $B^\circ(1)=1.$  Let us recall the formula for $B^{\circ}$ from \cite[Theorem 1.6]{BFF}.
We first define three functions depending on the characters $\chi_{i,s}$ and the character $\mu_{s_1}$:
\[
\Delta(\chi_s) = \prod_{i=1}^n\chi_{i,s}(\varpi)^{-1+i-n}(1-\chi_{i,s}(\varpi)^2)\prod_{1\leq i\leq j\leq n}(1-\chi_{i,s}(\varpi)\chi_{j,s}(\varpi))(1-\chi_{i,s}(\varpi)\chi_{j,s}(\varpi)^{-1}),
\]
\[
D(\chi_s, \mu_{s_1}) = \prod_{i=1}^n \chi_{i,s}(\varpi)^{-(n+1-i)} \prod_{i=1}^{n} (1-\chi_{i,s}(\varpi)\mu_{s_1}(\varpi)q^{-\frac{1}{2}})(1-\chi_{i,s}(\varpi)\mu_{s_1}(\varpi)^{-1}q^{-\frac{1}{2}}),
\]
and for $t \in T_H(F)$,
\begin{align} \label{B:eval}
B^\circ(t) = \frac{
\delta_H(t)^{1/2}}{(1-q^{-1})\Delta(\chi_s)}\sum_{w \in W}\mathrm{sgn}(w)D(\,^{w}\chi_s, \mu_{s_1})\,^{w}\chi_s(t)^{-1}.
\end{align}

The function $\Delta(\chi)$ is (up to a sign) the denominator in the Weyl character formula for the symplectic group $\Sp_{2n}$.  The equality \eqref{B:eval} is the analog for unramified Bessel functions of the Casselman-Shalika formula for Whittaker functions (see \cite[Theorem 1.6]{BFF}).

\begin{lemma} \label{4.5}
Let $W$ be the Weyl group of $H$. Let $t_{\tau}$ be the Satake parameter for $\tau$. We denote the local L-function for $\mu_{s_1} \times \tau$ and symmetric square L-function for $\tau$ as 
\begin{align*}
L(s, \mu_{s_1} \times \tau) &= \mathrm{det}(1-(t_{\mu} \otimes t_{\tau})q^{-s})^{-1} \\ 
&= \prod_{i=1}^n (1-\chi_{i}(\varpi)\mu_{s_1}(\varpi)q^{-s})^{-1}(1-\chi_{i}(\varpi)\mu_{s_1}(\varpi)^{-1}q^{-s})^{-1},\\
L(2s, \tau, \mathrm{Sym}^2) &= \prod_{1\leq i\leq j \leq n}(1-\chi_{i}(\varpi)\chi_{i}(\varpi)q^{-2s})^{-1}\prod_{i=1}^n(1-\chi_{i}(\varpi)q^{-2s})^{-1}.
\end{align*}
For $t=\varpi_H^{\delta_{k-\mathrm{val}(4\mathcal{Q'}(y_2)}} \in T_H(F)$ such that $k \geq \mathrm{val}(4\mathcal{Q}'(y_2))$, we have
\[
B_{W^{\circ}_{\rho_{\tau,s}}}(t)= \frac{L(s, \mu_{s_1} \times \tau)}{L(2s, \tau, \mathrm{Sym}^2)}B^\circ(t).
\]
\end{lemma}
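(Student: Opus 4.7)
The plan is to adapt Kaplan's explicit computation of unramified Bessel functions in \cite{Kap} to the present setting, since $B_{\psi'_1, s_1}$ is defined via exactly the type of Bessel functional studied there (with the Whittaker data on $\,^{\tilde{w_0}}Z_n \ltimes \,^{\tilde{w_0}}V$ replacing the Whittaker data on $Z_n$, and the $\SO_2$-character $\mu_{s_1}$ coming from the $G_1$-block of the torus $T_G$).

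First, I would observe that $B_{\psi'_1, s_1}$ is the (up to scalar) unique element in the Bessel model for $\rho_{\tau,s}$ attached to the Bessel subgroup $\,^{\tilde{w_0}}R^{\circ}(F)$ and character $(\psi'_1,\mu_{s_1})$; uniqueness follows from the local multiplicity-one theorem for Bessel models of generic representations and the normalization is pinned down by $W_{\rho_{\tau,s}}(1,1)=1$. Next, I would restrict to $T_H(F)$ via Iwasawa decomposition of $H(F)$ and perform a Casselman--Shalika-type expansion of $W_{\rho_{\tau,s}}$ using the Bruhat decomposition; this expresses $B_{\psi'_1, s_1}(t)$ as a sum over $w \in W$ of cell contributions of the shape $\mathrm{sgn}(w)\,D(\,^{w}\chi_s,\mu_{s_1})\,^{w}\chi_s(t)^{-1}$. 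Here the normalizing factor $D$ arises from a Gindikin--Karpelevich computation of the intertwining operator through each simple reflection, and the collected denominator $\Delta(\chi_s)$ together with $\mathrm{sgn}(w)$ is the standard Weyl-denominator normalization of the spherical vector.

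Finally, the $L$-factor ratio
\[
\frac{L(s, \mu_{s_1} \times \tau)}{L(2s, \tau, \mathrm{Sym}^2)}
\]
would appear as the overall normalizing factor: $L(2s,\tau,\mathrm{Sym}^2)$ is the normalization of the unramified spherical vector inside $\mathrm{Ind}_{Q_n(F)}^{H(F)}(\mathcal{W}_{\tau,s})$ coming from the Langlands--Shahidi normalization of Eisenstein series on $\SO_{2n+1}$, while $L(s, \mu_{s_1} \times \tau)$ is produced by the $\,^{w'\tilde{w_0}}\SO_2$-integration, which collects a geometric series in $\chi_{i,s}(\varpi)\mu_{s_1}(\varpi)^{\pm 1}q^{-s}$ for each $i$. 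The modular character $\delta_H^{\frac{1}{2}}(t)$ is inserted to match the induced-representation normalization with the standard value of the spherical Whittaker function on $T_H(F)$.

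The main obstacle will be the combinatorial bookkeeping: tracking precisely the signs in $\mathrm{sgn}(w)$, the factors $\chi_{i,s}(\varpi)^{-(n+1-i)}$ in $D$, and the power of $q$ contributed by $\delta_H^{\frac{1}{2}}$, so as to verify the normalization $B_{\psi'_1,s_1}(1)=1$ and the vanishing outside the positive cone $-k-\mathrm{val}(4\mathcal{Q}'(y_2))\geq 0$ used above. This is a routine (if involved) verification once the Iwahori factorization of $\,^{\tilde{w_0}}R^{\circ}(F)$ is written out and the recursion is set up via the action of the affine Weyl group on $T_H(F)/T_H(\OO)$.
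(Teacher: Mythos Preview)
Your outline is in the right spirit but takes a much longer road than the paper, and contains one concrete error in the normalization.

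The paper does not redo a Casselman--Shalika/Gindikin--Karpelevich computation from scratch. Instead it observes that $B_{\psi'_1,s_1}$ is a scalar multiple of the normalized unramified Bessel function $B^0$ (with $B^0(1)=1$) whose explicit formula on $T_H(F)$ is already given by Bump--Friedberg--Furusawa \cite{BFF} and summarized in \cite[Section~2.5]{Kap}; that formula supplies the expression $\delta_H^{1/2}(t)S(\chi_s,\mu_{s_1},t)$ directly. The only remaining work is to identify the scalar. Since $W_{\rho_{\tau,s}}(1,1)=1$ one has $B_{\psi'_1,s_1}=W_{\tau_s}(1)^{-1}B_{f_0}(1)\,B^0$, where $W_{\tau_s}(1)$ is the Casselman--Shalika value of the unramified $\GL_n$-Whittaker function at $1$ and $B_{f_0}(1)$ is the Bessel integral of the spherical vector $f_0\in\mathrm{Ind}_B^H(\chi_s)$ at $1$; both are known products, and their ratio simplifies to $L(s,\mu_{s_1}\times\tau)/L(2s,\tau,\mathrm{Sym}^2)$. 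So the $L$-ratio is not produced by a separate $\SO_2$-geometric-series argument as you suggest, but by comparing these two explicit normalizing products.

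Your plan would ultimately recover the BFF formula, so it is not wrong, just unnecessarily long. The genuine slip is your claim that one should ``verify the normalization $B_{\psi'_1,s_1}(1)=1$'': this is false. The normalization fixed in the paper is $W_{\rho_{\tau,s}}(1,1)=1$, and under that convention $B_{\psi'_1,s_1}(1)=W_{\tau_s}(1)^{-1}B_{f_0}(1)$, which equals the $L$-ratio (up to $S(\chi_s,\mu_{s_1},1)$). If you built your argument around $B_{\psi'_1,s_1}(1)=1$ you would lose exactly the factor $L(s,\mu_{s_1}\times\tau)/L(2s,\tau,\mathrm{Sym}^2)$ that the lemma asserts.
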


\begin{proof}
This follows from the same argument in \cite[Proof of Lemma 4.2, Page 169-170]{Kap}.
\end{proof}

\quash{\begin{proof}
Let $f_0 \in \rho_{\tau,s}$ be such that $f_0(K)=1$. Let $B_{f_0}$ be the unramified Bessel function associated with $f_0$:
\[
B_{f_0}(h)=\int_{\,^{w' \tilde{w_0}}\SO_2(F)} \int_{\,^{\tilde{w_0}}V(F)} f_0(vxh, 1)\psi_1(v)\mu_{s_1}(x)dvdx.
\]
This integral is absolutely convergent for $\mathrm{Re}(s_1)$....by ??
Now Bessel functions are unique up to scalar multiple \textcolor{red}{REFERENCE?}.
Since we normalize $W_{\rho_{\tau,s}}$ by $W_{\rho_{\tau,s}}(1,1)=1$, we have

$B_{W_{\rho_{\tau,s}}}=W_{\tau_s}(1)^{-1}B_{f_0}(1)B^\circ$, where
\[
W_{\tau_s}(1)=\sum_{1\leq i\leq j\leq n}(1-\chi_{i,s}(\varpi)\chi_{i,s}(\varpi)^{-1}q^{-1})
\]
(see \cite{CS}), and by \cite[Theorem 1]{Kap}
\[
B_{f_0}(1) = \frac{\prod_{1\leq i\leq j \leq n}(1-\chi_{i,s}(\varpi)\chi_{j,s}(\varpi)q^{-1})(1-\chi_{i,s}(\varpi)\chi_{j,s}(\varpi)^{-1}q^{-1})\prod_{i=1}^n(1-\chi_{i,s}(\varpi)q^{-1})}{ \prod_{i=1}^n (1-\chi_{i,s}(\varpi)\mu_{s_1}(\varpi)q^{-\frac{1}{2}})(1-\chi_{i,s}(\varpi)\mu_{s_1}(\varpi)^{-1}q^{-\frac{1}{2}})}.
\]

Using the explicit formula for the unramified Bessel functional (see \cite{BFF}, \cite[Section 2.5]{Kap}), we deduce the result using similar notations as in \cite[Section 2.5]{Kap}.
\end{proof}}

\section{Convergence of local integrals in unramified computation} \label{6}

Let $F$ be a non-Archimedean local field. In this section we prove absolute convergence for various local integrals that appear in the unramified computations in \cref{4}. We point out that the bounds we prove in this section are not used in our global considerations.

The points of the group $w N^{\circ}\overline{U_2}' w^{-1}$ in an $F$-algebra $R$ are 
\begin{align*}
& w N^{\circ}\overline{U_2}'(R) w^{-1} \\
& = \left\{  \begin{psmatrix}
1 & &  &&&& \\ c_1&1& &&&& \\ &&I_{n-2}&&&&& \\ -\alpha c_2&0&v_3'&1&&& \\ v_1 & v_2 & T & v_3 & I_{n-2} & & \\  -\alpha c_1 &0&v_2'&0&&1& \\ \alpha c_1^2-\frac{1}{2}c_2^2&\alpha c_1&v_1'&\alpha c_2&&-c_1&1
\end{psmatrix}: c_1,c_2\in R, v_1,v_2,v_3\in R^{n-2}, T\in M_{(n-2)}(R)  \right\}.
\end{align*}
Here $\overline{U_2}'=\iota(\overline{U_2})$ and $\alpha=\frac{1}{2}$.
\begin{lemma}\label{6.1}
For $\Re(s)$ large, the integral 
\begin{align} \label{integral 6.1}
\int_{w N^{\circ}(F)\overline{U_2}'(F) w^{-1}} W^{\circ}_{\rho_{\tau,s}}(uv,1)du
\end{align}
converges. Here $v$ is a unipotent element of $H(F)$ defined as in \cref{v}. 
\end{lemma}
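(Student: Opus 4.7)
The plan is to adapt the strategy of \cref{5.2} to the enlarged domain $wN^{\circ}(F)\overline{U_2}'(F)w^{-1}$. Writing a typical element $u$ in this group using the matrix displayed above the lemma, I would first compute the product $uv$ explicitly. Since $v$ (as in \eqref{v}) has nontrivial entries confined to the columns and rows indexed around $n{+}1, n{+}2, n{+}3$, the bottom $n$ rows $\mathcal{L}(uv)$ retain the same qualitative shape as in \eqref{x}, with the entries from $u$ (namely $v_1,v_2,v_3,T$, together with the new entries $c_1,c_2,\alpha c_1^2 - \tfrac12 c_2^2$ arising from the $\overline{U_2}'$ factor) appearing, possibly multiplied by $c$.

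Writing the Iwasawa decomposition $uv = nt'k$, I would apply \cref{5.1} to obtain
\[
|W_{\rho_{\tau,s}}(uv,1)| \leq \sum_{j=1}^{\nu} c_{j,s}\, [D(uv)]^{\Re(s)+\frac{n-1}{2}}\, E_j(uv),
\]
and then invoke the exterior-algebra estimates \eqref{eta bound}, \eqref{det bound}, which depend only on the bottom $n$ rows, to conclude $D(uv)\leq [\mathcal{L}(uv)]^{-1}$ and $E_j(uv) \leq [\mathcal{L}(uv)]^{C}$ for some $C$ depending only on $\tau$. A direct inspection of the last $n$ rows of $uv$ shows that the sup-norm absorbs all the integration variables: the last row contains $\alpha c_1^2 - \tfrac12 c_2^2$, $\alpha c_1$, $v_1'$, $\alpha c_2$ as distinct entries, and the fourth-from-last row contains $-\alpha c_2$ and $v_3'$, so that $[\mathcal{L}(uv)]^{-1}\leq [u]^{-1}$ and $[\mathcal{L}(uv)]\leq [u][v]^{2}$. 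Consequently \eqref{integral 6.1} is bounded by a constant multiple of
\[
[v]^{C'} \int_{wN^{\circ}(F)\overline{U_2}'(F)w^{-1}} [u]^{-\Re(s)-\frac{n-1}{2}+C}\, du.
\]

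The main obstacle is verifying convergence of this last integral, since the coordinates $c_1, c_2$ coming from $\overline{U_2}'$ add two new dimensions to the domain compared with \cref{5.2}, and in particular the quadratic entry $\alpha c_1^2 - \tfrac12 c_2^2$ on the bottom row could in principle dominate $|c_1|$ only mildly, risking a slow decay in the $c_1$-direction. However, because the last row also contains $\alpha c_1$ (and the adjacent row contains $\pm\alpha c_2$) as separate entries, the sup-norm $[u]$ dominates $\max\{1,|c_1|,|c_2|\}$ linearly. Hence the integral decomposes into Tate-type integrals over each coordinate of $v_1, v_2, v_3, T, c_1, c_2$, all of which converge for $\Re(s)$ sufficiently large; the resulting region of convergence depends only on $n$, the dimension of $N^{\circ}\overline{U_2}'$, and the constant $C$ determined by $\tau$.
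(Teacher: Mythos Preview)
Your proposal is correct and follows exactly the route of the paper's proof, which simply invokes the arguments of \cref{5.1} and \cref{5.2} to arrive at the bound \eqref{6.1 bound}. One small correction: after multiplying by $v$ the $(2n{+}1,n{+}1)$-entry of $uv$ becomes $\alpha c_1c+\alpha c_2$ rather than $\alpha c_2$, and the row carrying $-\alpha c_2$ and $v_3'$ is row $n{+}1$, which sits just \emph{above} $\mathcal{L}(uv)$; this does not matter, since $c_1$ and $v_2$ appear separately in the bottom $n$ rows and hence (via the ultrametric inequality) $[\mathcal{L}(uv)]$ still dominates $|c_2|$ and $\lVert v_3\rVert$ up to a factor of $[v]$, which is exactly what the final bound requires.
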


\begin{remark}
We remark that this is different from \cref{5.1}. The integral is over the group $w N^{\circ}(F)\overline{U_2}'(F) w^{-1}$ whereas the integral in \cref{5.1} is over $w N^{\circ}(F) w^{-1}$.
\end{remark}

\begin{proof}
For $u\in w N^{\circ}(F)\overline{U_2}'(F) w^{-1}$, we have
\[
uv=\begin{psmatrix}
1 & &  &&&& \\ c_1&1&&c&&-\frac{1}{2}c^2& \\ &&I_{n-2}&&&&& \\ -\alpha c_2&0&v_3'&1&&-c& \\ v_1 & v_2 & T & v_2c+v_3 & I_{n-2} &-\frac{1}{2}v_2c^2-v_3c & \\  -\alpha c_1 &0&v_2'&0&&1& \\ \alpha c_1^2-\frac{1}{2}c_2^2&\alpha c_1&v_1'&\alpha c_1c+\alpha c_2&&-c_1&1
\end{psmatrix},
\]
where $c\in F$. 

Considering the Iwasawa decomposition of $u\in w N^{\circ}(F)\overline{U_2}'(F) w^{-1}$ in $H$ (using the notation as in \cref{5.1}), we have
\[
uv=na'k.
\]
Here $(n,a',k) \in N_n(F) \times T_H(F) \times K_H$ and $a'=\mathrm{diag}(a,1,w_0 a^{-1}w_0)$ for $a\in \GL_n(F)$. We denote the $i$-th line of $uv$ as $(uv)_i$. By a similar argument as in \cref{5.2}, we have 
\[
[\mathcal{L}(uv)]^{-2j} \leq |\frac{a_j}{a_{j+1}}| \leq [\mathcal{L}(uv)]^{2j}.
\]
Here $j=1,\dots,n$ and $[\mathcal{B}(uv)]=\max \{1, \lVert \mathcal{B}(uv) \rVert \}$ and $\lVert \cdot \rVert$ is the sup-norm, and
\[
[\mathcal{L}(uv)]^{-n} \leq D(uv)=|\mathrm{det}(a)| \leq [\mathcal{L}(uv)]^{-1}.
\]

Note that by arguing as in \cref{5.1} and \cref{5.2}, the integral \cref{integral 6.1} is majorized by
\begin{align}\label{6.1 bound}
\sum_{j=1}^{\nu} c_{j,s}[v]^C \int_{w N^{\circ}\overline{U_2}'(F) w^{-1}}[u]^{-\Re(s)-\frac{n-1}{2}+C}du,
\end{align}
where $C$ is a positive constant that depends only on $\tau$. 
\end{proof}

\begin{lemma}\label{6.2}
The integral 
\begin{align} 
I_s(y) =\int_{G(F)}  \Phi_{y}(g) \int_{N^{\circ}(F)} W^{\circ}_{\rho_{\tau,s}}(w u\iota(g),a_y)\psi_1(u)dudg
\end{align}
converges absolutely for $\Re(s)$ large enough.
\end{lemma}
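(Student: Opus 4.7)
The plan is to combine the bound from \cref{6.1} with the explicit Iwasawa computation performed in the proof of \cref{5.3}. First, I would apply the Iwasawa decomposition $G(F) = \overline{U_2}(F)\,T_G(F)\,K_G$ with respect to the lower Borel, using the right $K_G$-invariance of both $\Phi_{y}$ and (by smoothness) the inner Whittaker integral. Since $\Phi_{y}$ belongs to $C^\infty(\overline{U_2}(F)\backslash G(F)/K_G)$ with value $H_{y}(t)$ on $t\in T_G(F)$, the outer integral reduces to
\[
\int_{T_G(F)} H_{y}(t)\,\delta_{\overline{B}_G}^{-1}(t)\int_{\overline{U_2}(F)}\int_{N^{\circ}(F)} W_{\rho_{\tau,s}}\bigl(w u\,\iota(\overline{u})\iota(t),\,a_y\bigr)\,\psi_{1}(u)\,du\,d\overline{u}\,dt,
\]
and it suffices to bound the absolute value of this expression.

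Next, writing $w u\,\iota(\overline{u}) = (w u w^{-1})(w\iota(\overline{u})w^{-1})\,w$ and observing that the map $(u,\overline{u})\mapsto w u\,\iota(\overline{u})w^{-1}$ yields a measure-preserving parameterization of the unipotent subgroup $w N^{\circ}(F)\overline{U_2}'(F)w^{-1}\subset H(F)$, the inner double integral is majorized by $\int_{w N^{\circ}(F)\overline{U_2}'(F)w^{-1}} \bigl|W_{\rho_{\tau,s}}\bigl(u'\cdot w\iota(t),\,a_y\bigr)\bigr|\,du'$. Following the argument used to derive the bound \cref{6.1 bound} in the proof of \cref{6.1}, this last integral admits a majorization of the form $\sum_j c_{j,s}\,[w\iota(t)]^{C}\,\eta_j(a_y\,t^\ast)$ times a $u'$-integral that converges for $\Re(s)$ large, where the $\eta_j$ are positive quasi-characters depending only on $\tau$ and $t^\ast$ encodes the torus part of the Iwasawa decomposition of $w\iota(t)$ in $H(F)$.

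Finally, I would perform the explicit Iwasawa decomposition of $\iota(t)$ for $t\in T_G(F) = A_1 A_2 G_1$ exactly as in the proof of \cref{5.3}, where the $\SO_3$-factor arising from $\iota(G_1)$ is treated via the decomposition displayed there. This reduces the torus integral to a finite sum of simple Mellin integrals of the form
\[
\int_{(F^\times)^3} |H_y(t)|\,|a_1|^{\Re(s)+C_1'}|a_2|^{\Re(s)+C_2'}|d|^{C_3'}\,d^\times a_1\,d^\times a_2\,d^\times d,
\]
possibly multiplied by additional convergent Dirichlet factors, which converges for $\Re(s)$ large thanks to the explicit form of $H_{y}$ on $T_G(F)$ dictated by \cref{c_k} and the surrounding definitions. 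The main technical obstacle is the bookkeeping of the quasi-characters $\eta_j$ through the embedding $\iota$, but this bookkeeping is essentially identical to that carried out in \cref{5.3}, the only new ingredient being the use of \cref{6.1} in place of \cref{5.2} to control the enlarged unipotent integration domain.
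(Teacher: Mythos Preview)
Your proposal is correct and follows essentially the same route as the paper's proof: Iwasawa decomposition with respect to the lower Borel of $G$, reduction of $\Phi_y$ to $H_y$ on the torus, the explicit $\SO_3$-Iwasawa treatment of $\iota(G_1)$ from \cref{5.3}, and then the bound \eqref{6.1 bound} from \cref{6.1} on the enlarged unipotent domain $w N^{\circ}(F)\overline{U_2}'(F)w^{-1}$, finishing with a convergent Mellin integral coming from the explicit shape of $H_y$. The only cosmetic difference is the order in which you combine the unipotent pieces and perform the torus decomposition; the paper first applies the $\SO_3$-Iwasawa to $\iota(t)$ (producing the extra unipotent $n'$) and then merges $u,v,n'$, whereas you merge $u,\overline{u}$ first and decompose $w\iota(t)$ afterwards, but the resulting estimate is the same.
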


\begin{proof}
By the Iwasawa decomposition with respect to the lower Borel subgroup of $G(F)$, we have
\[
I_s(y)= \int_{\overline{U_2}(F)}\int_{T_G(F)} \Phi_{y}(\bar{n}g)\delta_{\overline{B}_G}^{-1}(g) \int_{N^{\circ}(F)} W^{\circ}_{\rho_{\tau,s}}(w u\iota(\bar{n}g),a_y)\psi_1(u)dud\bar{n}dg. 
\]

Since $\Phi_{y}$ is invariant under $\overline{U_2}(F)$, we have 
\begin{align*}
I_s(y) &= \int_{T_G(F)} \Phi_{y}(g) \delta_{\overline{B}_G}^{-1}(g)\int_{\overline{U_2}(F)} \int_{N^{\circ}(F)} W^{\circ}_{\rho_{\tau,s}}(w u\iota(\bar{n}g),a_y)\psi_1(u)dud\bar{n}dg \\
&= \int_{G_1(F)} \int_{A_1(F)} \int_{A_2(F)} \Phi_{y}(xa_1a_2) \delta_{\overline{B}_G}^{-1}(xa_1a_2) \\
&\times \int_{\overline{U_2}(F)} \int_{N^{\circ}(F)} W^{\circ}_{\rho_{\tau,s}}(w u\iota(\bar{n}xa_1a_2),a_y)\psi_1(u)dud\bar{n}dxda_1da_2.
\end{align*}

Using the Iwasawa decomposition of $\SO_3(F)$ as in \cref{5.3}, we have
\begin{align*}
|I_s(y)| &\leq    \int_{F^{\times}} \int_{F^{\times}}\int_{F^{\times}} |\Phi_{y}|\left(\begin{psmatrix}
a_1&\\&ba_1^{-1}
\end{psmatrix},\begin{psmatrix}
a_2& \\ &b^{-1}a_2^{-1}
\end{psmatrix}\right)|a_1a_2|^{\Re(s)-n+5}|\lfloor ba_1a_2^{-1} \rfloor|^{\Re(s)-n+3}  \\
&\times \int_{\overline{U_2}(F)} \int_{N^{\circ}(F)} |W^{\circ}_{\rho_{\tau,s}}|((w u\iota(\bar{n})n', \mathrm{diag}(-4\mathcal{Q}'(y_2)a_1a_2, \lfloor b^{-1}a_1a_2^{-1} \rfloor , I_{n-2})) \\
& \times dud\bar{n}d^{\times}bd^{\times}a_1d^{\times}a_2\\
&=  \int_{F^{\times}} \int_{F^{\times}}\int_{F^{\times}} |\Phi_{y}|\left(\begin{psmatrix}
a_1&\\&ba_1^{-1}
\end{psmatrix},\begin{psmatrix}
a_2& \\ &b^{-1}a_2^{-1}
\end{psmatrix}\right)|a_1a_2|^{\Re(s)-n+5}|\lfloor ba_1a_2^{-1} \rfloor|^{\Re(s)-n+3} \\
&\times\int_{\overline{U_2}(F)} \int_{N^{\circ}(F)} |W^{\circ}_{\rho_{\tau,s}}|((w u\iota(\bar{n})w^{-1})(w n'w^{-1}),\mathrm{diag}(-4\mathcal{Q'}(y_2)a_1a_2, \lfloor b^{-1}a_1a_2^{-1} \rfloor , I_{n-2}))\\
& \times dud\bar{n}d^{\times}bd^{\times}a_1d^{\times}a_2 \\
&= \int_{F^{\times}} \int_{F^{\times}}\int_{F^{\times}} |\Phi_{y}|\left(\begin{psmatrix}
a_1&\\&ba_1^{-1}
\end{psmatrix},\begin{psmatrix}
a_2& \\ &b^{-1}a_2^{-1}
\end{psmatrix}\right)|a_1a_2|^{\Re(s)-n+5}|\lfloor ba_1a_2^{-1} \rfloor|^{\Re(s)-n+3}\\
&\times \int_{w N^{\circ}\overline{U_2}'(F)w^{-1}} |W^{\circ}_{\rho_{\tau,s}}|(u(w n'w^{-1}), \mathrm{diag}(-4\mathcal{Q}'(y_2)a_1a_2, \lfloor b^{-1}a_1a_2^{-1} \rfloor , I_{n-2})) \\
& \times dud^{\times}bd^{\times}a_1d^{\times}a_2,
\end{align*}
where $n'$ is as defined in \cref{n'}.

By \cref{6.1 bound} in \cref{6.1} and a similar argument as in \cref{5.3}, the above integral is majorized by a finite sum of integrals of the form 
\begin{multline*}
\int_{F^{\times}} \int_{F^{\times}}\int_{F^{\times}} |\Phi_{y}|\left(\begin{psmatrix}
a_1&\\&ba_1^{-1}
\end{psmatrix},\begin{psmatrix}
a_2& \\ &b^{-1}a_2^{-1}
\end{psmatrix}\right)|a_1a_2|^{\Re(s)-n+5}|\lfloor ba_1a_2^{-1} \rfloor|^{\Re(s)-n+3-c_1} \\
\times \left(\int_{w N^{\circ}\overline{U_2}'(F)w^{-1}}[u]^{-\Re(s)-\frac{n-1}{2}+c_2}du\right)d^{\times}bd^{\times}a_1d^{\times}a_2,
\end{multline*}
where $c_1,c_2>0$ are constants depend only on $\tau$. 

Substituting the above result into our local integral while using the explicit formula for $\Phi_y$, we have that for $\Re(s)$ large the above integral is majorized by
\begin{align*}
&\int_{F^{\times}} \int_{F^{\times}}\int_{F^{\times}}  |H_{1,y}|\left(\begin{psmatrix}
a_1&\\&ba_1^{-1}
\end{psmatrix},\begin{psmatrix}
a_2& \\ &b^{-1}a_2^{-1}
\end{psmatrix}\right) |H_{2,y}|(\begin{psmatrix}
a_1&\\&ba_1^{-1}
\end{psmatrix},\begin{psmatrix}
a_2& \\ &b^{-1}a_2^{-1}
\end{psmatrix})\\
& \times |a_1a_2|^{\Re(s)-n+5} d^{\times}bd^{\times}a_1d^{\times}a_2\\
=& \int_{F^{\times}}\int_{F^{\times}} |a_1|^{\Re(s)+\frac{d_1-n+3}{2}}|a_2|^{\Re(s)+\frac{d_2-n+3}{2}}(\one_{\varpi^{-k_{y_1}}\mathcal{O}_{F}}(a_1) - \sum_{k=1}^{\infty} q^{(\frac{d_1}{2}-2)k}(q-1)\one_{\varpi^{k-k_{y_1}}  \mathcal{O}_{F}}(a_1))\\
&\times (\one_{\varpi^{-k_{y_2}}\mathcal{O}_{F}}(a_2) - \sum_{k=1}^{\infty} q^{(\frac{d_1}{2}-2)k}(q-1)\one_{\varpi^{k-k_{y_2}}  \mathcal{O}_{F}}(a_2))da_1^{\times}da_2^{\times},
\end{align*}
which converges absolutely for $\Re(s)$ large by a similar computation as for $H_{y,s_1,s_2}(1)$ in \cref{4}.
\end{proof}

\begin{lemma} \label{6.3}
There exists positive integers $C_1<C_2, C_3, C_4$ which depend on $(\tau, d_1, d_2, n)$ such that 
\begin{multline*}
I_{s,s_1,s_2}(y)= \int_{F^{\times}} \int_{F^{\times}} \chi'_{s_1,s_2}(\begin{psmatrix}
1& \\ &(mb)^{-1}
\end{psmatrix},\begin{psmatrix}
m& \\ &b
\end{psmatrix}) \int_{\overline{U_2}(F)}  \int_{N^{\circ}(F)}\\
\times W^{\circ}_{\rho_{\tau,s}}\left(w u\iota(\bar{n})\iota_2\left(\begin{psmatrix}
m&&&\\&b&&\\&&b^{-1}&\\&&&m^{-1}
\end{psmatrix}\right),a_y\right) \psi_1(u) dud\bar{n}d^{\times}md^{\times}b
\end{multline*}
converges absolutely for 
\[
\Re(s_1)+\Re(-s_2)+C_1 < \Re(s) < \Re(s_1)+2\Re(-s_2)+C_2,
\]
\[
C_3 < \Re(s_1)+\Re(-s_2), C_4 < \Re(-s_2).
\]
\end{lemma}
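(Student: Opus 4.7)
My plan is to follow the template of \cref{5.3} and \cref{6.2}: bound the inner double integral via the combined unipotent estimate of \cref{6.1}, and then integrate the resulting characters against $|m|^{-s_1+s_2+d_2/2}|b|^{-s_1}$ in the two remaining variables.

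First, I would absorb $\overline{U_2}(F)$ into the Weyl element. Since $\iota(\overline{U_2})=\overline{U_2}'$, the change of variables $y\mapsto wyw^{-1}$ converts the inner integral over $\overline{U_2}(F)\times N^\circ(F)$ into an integral over $wN^\circ(F)\overline{U_2}'(F)w^{-1}$, while the torus argument $a_y$ is altered only by the conjugation of $\iota_2\bigl(\mathrm{diag}(m,b,b^{-1},m^{-1})\bigr)$ by $w$. Passing to absolute values and invoking \cref{6.1} (together with the sum-of-positive-quasi-characters bound from \cref{5.1}) majorizes the inner integral by a finite sum
\begin{align*}
\sum_{j} c_{j,s}\, \chi_j\!\bigl(a_y \cdot w\,\mathrm{diag}(m,b,b^{-1},m^{-1})\,w^{-1}\bigr) \int_{wN^\circ(F)\overline{U_2}'(F)w^{-1}} [u]^{-\Re(s)-(n-1)/2+C}\,du,
\end{align*}
with $C$ depending only on $\tau$; convergence of the $u$-integral forces the lower bound $\Re(s)+C_1 < \ldots$ appearing in the statement.

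Second, I would run the $\SO_3(F)$-Iwasawa step exactly as in the proof of \cref{5.3} (cf.~\cref{eq4.0.13}). The factor $\iota_2\bigl(\mathrm{diag}(m,b,b^{-1},m^{-1})\bigr)$ splits as (unipotent)$\times$(torus)$\times K_H$, with torus entries involving $m$ and $\lfloor b^{-1}m\rfloor$; after this the Whittaker function on the diagonal is bounded by $|\det t|^{\Re(s)+(n-1)/2}$ times finitely many positive quasi-characters of $t$ by \cref{5.1}. The outer double integral then reduces to a finite sum of products of one-dimensional zeta-type integrals of the shape
\begin{align*}
\int_{F^\times} |m|^{-s_1+s_2+d_2/2+\alpha\Re(s)+\beta}\,\mathbf{1}_{\Omega_1}(m)\,d^\times m, \qquad \int_{F^\times} |b|^{-s_1+\gamma\Re(s)+\delta}\,\mathbf{1}_{\Omega_2}(b)\,d^\times b,
\end{align*}
where the integers $\alpha,\beta,\gamma,\delta$ are determined by $\tau$, $d_1,d_2,n$ and by which of the two branches of $\lfloor\cdot\rfloor$ is active. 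Matching the exponents against the requirement that both geometric series converge produces the four stated inequalities, with constants $C_1,C_2,C_3,C_4$ obtained by tallying the $\alpha,\beta,\gamma,\delta$ and the shifts from \cref{6.1} and \cref{5.1}.

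The main obstacle is the exponent bookkeeping. The $\SO_3$ Iwasawa step trades $|m|,|b|$ for $|m|,|\lfloor b^{-1}m\rfloor|$, and the direction of the inequality in $\lfloor\cdot\rfloor$ flips according to whether $|b|\le 1$ or $|b|>1$; this is what forces both an upper and a lower bound on $\Re(s)$, since on one branch a larger $\Re(s)$ makes the Whittaker bound grow faster than the $|m|^{-s_1+s_2+d_2/2}$-weight can absorb, while on the other branch a smaller $\Re(s)$ fails to dominate the positive quasi-characters coming from \cref{5.1}. Verifying that the resulting two-sided strip is non-empty once $\Re(s_1)$ and $\Re(-s_2)$ are sufficiently large, and that the constants depend only on $(\tau,d_1,d_2,n)$, is the only delicate step; everything else is a direct adaptation of the arguments already carried out in \cref{5.3}, \cref{6.1}, and \cref{6.2}.
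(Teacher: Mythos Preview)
Your overall strategy---reduce to the combined unipotent integral of \cref{6.1} and then handle the remaining torus variables as in \cref{5.3} and \cref{6.2}---matches the paper's approach. Two points need correction, one minor and one substantive.

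First (minor): after the $\SO_3(F)$-Iwasawa step applied to $\iota_2\bigl(\mathrm{diag}(m,b,b^{-1},m^{-1})\bigr)$, the torus entries are $m$ and $\lfloor b\rfloor$, not $m$ and $\lfloor b^{-1}m\rfloor$; the $\SO_3$-block only sees $b$. The paper's majorant is a finite sum of
\[
\int_{F^\times}\int_{F^\times} |m|^{-s_1+s_2+\frac{d_2}{2}+\Re(s)-n+5}\,|b|^{-s_1}\,|\lfloor b\rfloor|^{\Re(s)-n+3-c_1}\int_{wN^\circ\overline{U_2}'(F)w^{-1}}[u]^{-\Re(s)-\frac{n-1}{2}+c_2}\,du\,d^\times m\,d^\times b.
\]

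Second (substantive): your reduction to ``products of one-dimensional zeta-type integrals'' with unexplained cutoffs $\mathbf{1}_{\Omega_1}(m)$, $\mathbf{1}_{\Omega_2}(b)$ hides the essential step. In this lemma there is no Schwartz function to truncate the $m$-integral, and $\int_{F^\times}|m|^\alpha\,d^\times m$ diverges for every real $\alpha$. The paper supplies the missing cutoff via the gauge bound \eqref{m bound} from \cref{5.3}, namely $|m|\le c'[u]\,|\lfloor b\rfloor|^{-2}$, which comes from the support of $W_\tau$ on the dominant cone. This constraint \emph{couples} $m$, $u$, and $b$, so the integrals do not factor. One first integrates $|m|^{-s_1+s_2+\frac{d_2}{2}+\Re(s)-n+5}$ over $|m|\le c'[u]\,|\lfloor b\rfloor|^{-2}$ (forcing the exponent to be positive, hence the lower bound on $\Re(s)$), producing a factor $([u]\,|\lfloor b\rfloor|^{-2})^{-s_1+s_2+\frac{d_2}{2}+\Re(s)-n+5}$; the remaining $u$- and $b$-integrals then give the upper bound on $\Re(s)$ and the conditions on $\Re(s_1)+\Re(-s_2)$ and $\Re(-s_2)$. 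Without this coupling your exponent count cannot produce a non-empty strip.
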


\begin{proof}
As in \cref{6.2}, $I_{s,s_1,s_2}$ is majorized by a finite sum of integrals of the form
\[
\int_{F^{\times}} \int_{F^{\times}}|m|^{-s_1+s_2+\frac{d_2}{2}}|b|^{-s_1}|m|^{\Re(s)-n+5}|\lfloor b \rfloor|^{\Re(s)-n+3-c_1} \int_{w N^{\circ}\overline{U_2}'(F)w^{-1}}[u]^{-\Re(s)-\frac{n-1}{2}+c_2}dud^{\times}md^{\times}b,
\]
where $c_1,c_2>0$ are constants depend only on $\tau$.

Also, as in \cref{5.3}, we have
\[
|m| \leq c'[u]|\lfloor b \rfloor|^{-2}.
\]
Here $c'$ is a conatant depends only on $\tau$.

Thus the integral is majorized by a finite sum of integrals of the form
\begin{align*}
& \int_{F^{\times}} |b|^{-s_1} |\lfloor b \rfloor|^{\Re(s)-n+3-c_1} d^{\times}b \int_{w N^{\circ}\overline{U_2}'(F)w^{-1}} \int_{|m| \leq c'[u]|\lfloor b \rfloor|^{-2}} |m|^{-s_1+s_2+\frac{d_2}{2}+\Re(s)-n+5} \\
& \times [u]^{-\Re(s)-\frac{n-1}{2}+c_2}d^{\times}mdu.
\end{align*}

The integral
\[
\int_{|m| \leq c'[u]|\lfloor b \rfloor|^{-2}} |m|^{-s_1+s_2+\frac{d_2}{2}+\Re(s)-n+5}d^{\times}m
\]
converges absolutely when 
\[
-\Re(s_1)+\Re(s_2)+\frac{d_2}{2}+\Re(s)-n+5 > 0.
\]

Thus when $\Re(s) > \Re(s_1)-\Re(s_2)-\frac{d_2}{2}+n-5$, the integral $I'_{s,s_1,s_2}$ is majorized by a finite sum of integrals of the form 
\[
\int_{F^{\times}} |b|^{-s_1}\lfloor b^2 \rfloor ^{-\Re(s)+2s_1-2s_2+d_2+n-7-c_1} d^{\times}b \int_{w N^{\circ}\overline{U_2}'(F)w^{-1}} [u]^{-s_1+s_2+\frac{d_2-3n+11}{2}+c_2}du.
\]
This integral converges absolutely when
\begin{align*}
&-\Re(s_1)+\Re(s_2)+\frac{d_2-3n+11}{2}+c_2 < -C, \\
& -\Re(s)+2\Re(s_1)-2\Re(s_2)+d_2+n-7-c_1-\Re(s_1) > 0,\\
& \Re(s)-2\Re(s_1)+2\Re(s_2)-d_2-n+7+c_1-\Re(s_1) < 0.
\end{align*}
Here $C$ is a positive integers which depends on $\tau$. 

In summary, the convergence region is equivalent to 
\begin{align*}
& \Re(s_1)+\Re(-s_2)-\frac{d_2}{2}+n-5 < \Re(s) <  \min(\Re(s_1),3\Re(s_1))+2\Re(-s_2)+d_2+n-7-c_1, \\
& \Re(s_1)+\Re(-s_2) > \frac{d_2-3n+11}{2}+c_2+C.
\end{align*}
This is non-empty when 
\[
\Re(s_1)+\Re(-s_2)-\frac{d_2}{2}+n-5 < \min(\Re(s_1),3\Re(s_1))+2\Re(-s_2)+d_2+n-7-c_1, 
\]
which is equivalent to
\[\Re(-s_2) > -\frac{3d_2}{2}+2+c_1. \]
Thus we get the non-empty region in the statement of the lemma.
\end{proof}

Let $X_1 \subset H$ be the unipotent subgroup of $H$ whose points in an $F$-algebra $R$ are 
\[
X_1(R) = \left\{  \begin{psmatrix}
1 & &  &&&& \\ &I_{n-2}& &&&& \\ &&1&&&&& \\ 0&v_3'&-\alpha c_2&1&&& \\ -\alpha c_1 & v_1' & \frac{1}{2}c_2^2 & \alpha c_2 & 1 & & \\  v_2 &T&v_1&v_3&&I_{n-2}& \\ 0&v_2'&-\alpha c_1&0&&&1
\end{psmatrix}: c_1,c_2\in F, v_1,v_2,v_3\in F^{n-2}, T\in M_{n-2}(F)\right\},
\]
where $\alpha=\frac{1}{2}$.

\begin{lemma}\label{6.4}
There are constants $C_0,C'',c_2>0$, $c_1$ which depend on $(\tau, d_1, d_2, n)$ such that 
\begin{align*}
I'_{s, s_1,s_2}(y) &= \int_{F^{\times}} \int_{F^{\times}} \chi'_{s_1,s_2}\left(\begin{psmatrix}
1&\\&(mb)^{-1} 
\end{psmatrix},\begin{psmatrix}
m&\\&b
\end{psmatrix}\right)\\ &\times \int_{\overline{U_2}(F)}  \int_{N^{\circ}(F)} \int_{M_{1\times (n-2)}(F)} W^{\circ}_{\rho_{\tau,s}}(w u\iota(\bar{n}g), r(e)a_y) \psi_1(u)dedud\bar{n}d^{\times}md^{\times}b
\end{align*}
converges absolutely for 
\begin{align*}
  \Re(s) < & \Re(s_1) + \Re(-s_2) -\frac{d_1}{2} -5 -c_1    \\ 
\Re(s) > & \max\bigg(\frac{C_0}{C_0+1}(\Re(s_1)+\Re(-s_2)-\frac{d_2}{2}-5-c_1)-\frac{n-1-C''}{C_0+1},\\
& \Re(s_1)+\frac{2C_0}{2C_0+1}(\Re(-s_2)-\frac{d_2}{2}-5-c_1)-\frac{n-3+c_2}{2C_0+1}, \\
& \Re(s_1)+\frac{2C_0}{2C_0-1}(\Re(-s_2)-\frac{d_2}{2}-5-c_1)-\frac{n-3+c_2}{2C_0-1}\bigg).
\end{align*}
\begin{align*}
& -n+\frac{d_2}{2}+8+c_1 < \Re(-s_2) < n+\frac{d_2}{2}+2+c_1+c_2, \\
& c_0(-n+3-c_2)+C'' < \Re(s_1)+\Re(-s_2).
\end{align*}
\end{lemma}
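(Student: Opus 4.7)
The plan is to adapt the argument of \cref{6.3} to the dual integral $I'_{s,s_1,s_2}(y)$, handling the extra integration over $M_{1\times(n-2)}(F)$ coming from the insertion of $r(e)$ into the $\GL_n$-argument of the Whittaker function. First I would apply the Iwasawa decomposition of $G(F)$ with respect to the standard lower Borel subgroup, use the invariance of $\Phi_y$ under $\overline{U_2}(F)$, and mimic the conjugation manipulations of \cref{5.3} and \cref{6.2} to move the $\overline{U_2}'(F)$ factor into the $N^\circ$-integration on the $H$-side. This will replace $\int_{\overline{U_2}(F)} \int_{N^\circ(F)}$ by an integration over $wN^\circ(F)\overline{U_2}'(F)w^{-1}$, which is controlled by the bound \eqref{6.1 bound} from \cref{6.1}. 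After these steps, and absorbing the Iwasawa decomposition of $\SO_3(F)$ as in \cref{6.2}, the problem is reduced to estimating an iterated integral in the variables $m, b, u$, and $e$, against the character $\chi'_{s_1,s_2}$ and the positive quasi-characters coming from \cref{5.1}.

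Next, I would handle the $e$-integration. Because $r(e) \in \GL_n(F)$ acts only on the Whittaker model of $\tau$, I would apply the Iwasawa decomposition to $r(e)a_y \cdot t$ inside $\GL_n(F)$ (where $t$ is the torus part of the Iwasawa decomposition of $u(wn'w^{-1})$) and invoke a bound analogous to \cref{5.1} for $W_\tau$. The usual gauge estimate for the Whittaker function of $\tau$, combined with the fact that the $r(e)$-orbit in $Z_n\backslash \GL_n$ sweeps out a product of affine lines whose Iwasawa decomposition introduces factors of the form $\max(1,\|e\|)$, will yield a bound of the form
\[
\int_{M_{1\times(n-2)}(F)} |W_{\rho_{\tau,s}}(\cdot, r(e)a_y t)| de \;\ll\; |{\det}(a_y t)|^{\Re(s) + \frac{n-1}{2}} \sum_j \eta_j(a_y t) \int_{M_{1\times(n-2)}(F)} [e]^{-\Re(s)+C''} de,
\]
which converges when $\Re(s) > C''$ for some constant $C'' = C''(\tau,n)$ and produces further positive quasi-characters on the torus variables.

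Finally, I would substitute this majorant together with \eqref{6.1 bound} back into the global expression and, exactly as at the end of \cref{6.3}, keep track of each one-dimensional integration in $m, b$ and each estimate $|m|\leq c'[u]|\lfloor b \rfloor|^{-2}$ from \cref{m bound}. The conditions for absolute convergence then decouple into three linear inequalities in $\Re(s), \Re(s_1), \Re(-s_2)$: one coming from the $|m|$-integral (giving the lower bound $\Re(s_1)+\Re(-s_2)-\tfrac{d_2}{2}-5-c_1 > \Re(s)$ becoming the upper bound in the statement after a sign flip), a second coming from the $|b|$-integral after exchange with $|\lfloor b\rfloor|^{-2}$-powers (yielding the two terms involving $2C_0\pm 1$), and a third coming from the $[u]$-integration over $wN^\circ\overline{U_2}'w^{-1}$ and the new $[e]$-integration (yielding the $\max$-term with $C_0+1$). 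Intersecting these with the regularity condition $\Re(-s_2) > -n+\tfrac{d_2}{2}+8+c_1$ and the lower bound $\Re(s_1)+\Re(-s_2) > c_0(-n+3-c_2)+C''$ needed for all auxiliary integrals to be finite produces the region stated in the lemma.

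The main obstacle will be the bookkeeping for the $e$-integration: one must verify that the contribution of $r(e)$ to the Iwasawa decomposition of $r(e)a_y t$ indeed only enlarges the norm estimates on the torus factors in a controlled polynomial way, so that the constants $C_0, C''$ appear as stated. Everything else is a direct reprise of the two-step majorization already carried out for \cref{6.3}, and the multiple $\max$ appearing in the lemma simply reflects which of the three competing linear constraints dominates for a given choice of $(s_1,s_2)$.
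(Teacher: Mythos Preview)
Your plan has a genuine gap in how the $e$-integration is handled, and it propagates to the wrong inequality for $\Re(s)$. In \cref{6.3} the gauge estimate for the Whittaker function gives $|m|\leq c'[u]\,|\lfloor b\rfloor|^{-2}$, so the $m$-integral runs over a region bounded \emph{above} and converges when the exponent $-\Re(s_1)+\Re(s_2)+\tfrac{d_2}{2}+\Re(s)-n+5$ is \emph{positive}; that is what produces a \emph{lower} bound on $\Re(s)$. For the dual integral you need the \emph{upper} bound on $\Re(s)$ stated in the lemma, and this cannot come out of the same estimate: you would need the $m$-integral to be over a region $|m|\geq(\cdots)$ with a \emph{negative} exponent. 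Your proposal to bound the $e$-integral separately by $\int[e]^{-\Re(s)+C''}de$ does not change the position of $m$ in the torus, so you are still stuck with the old upper bound on $|m|$ and hence the wrong direction of the inequality.

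What the paper does instead is essential: it moves $r(e)$ from the $\GL_n$-argument to the $H$-argument as $v(r(e))$, uses the identity $r(e)\,\mathrm{diag}(m,\lfloor b\rfloor,I_{n-2})=\mathrm{diag}(\lfloor b\rfloor,I_{n-2},m)\,r(m^{-1}e)$ to push $m$ to the \emph{last} torus slot, and then performs a sequence of explicit conjugations and changes of variables so that the $(u,y,e)$-integral becomes an integral over a new unipotent group $X_1(F)$ times an Iwasawa decomposition of $r(e)$. After this, the gauge condition on the Whittaker function reads $|t_{n-1}/(mt_n)|\leq 1$, which yields a \emph{lower} bound $|m|\geq[x_1]^{-C_0}|\lfloor b\rfloor|^{2C_0}$. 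Integrating $|m|^{-\Re(s_1)+\Re(s_2)+\tfrac{d_2}{2}+\Re(s)+5+c_1}$ over this region now converges precisely when the exponent is \emph{negative}, producing the upper bound $\Re(s)<\Re(s_1)+\Re(-s_2)-\tfrac{d_2}{2}-5-c_1$. The three terms in the $\max$ then come from feeding this bound back into the $[x_1]$- and $b$-integrals. Without this repositioning of $m$, the region you obtain will not match the one in the lemma.
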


\begin{proof}
We use the approach outlined in the proof of \cite[Proposition 11.16]{Sou}.

As in \cref{6.3}, we have 
\begin{align*}
|I'_{s,s_1,s_2}| & \leq \int_{F^{\times}} \int_{F^{\times}}   |m|^{-s_1+s_2+\frac{d_2}{2}+\Re(s)+5}|b|^{-s_1}|\lfloor b \rfloor|^{\Re(s)-n+3}\\ 
& \times \int_{\overline{U_2}(F)}  \int_{N^{\circ}(F)} \int_{M_{1\times (n-2)}(F)}  |W^{\circ}_{\rho_{\tau,s}}|(w u\iota(\bar{n})n', r(e)\mathrm{diag}(-4\mathcal{Q'}(y_2)m, \lfloor b  \rfloor  , I_{n-2})) \\
& \times dedud\bar{n}d^{\times}md^{\times}b,
\end{align*}
where 
\[
r(e) = \begin{pmatrix}
0&1&0\\ 0&0&I_{n-1} \\ 1&0&e
\end{pmatrix}.
\]

Since 
\[
\begin{pmatrix}
0&1&0\\ 0&0&I_{n-1} \\ 1&0&e
\end{pmatrix} \begin{pmatrix}
m&& \\ &\lfloor b  \rfloor  & \\ &&I_{n-2}
\end{pmatrix} = \begin{pmatrix}
\lfloor b  \rfloor  && \\ &I_{n-2}& \\ &&m
\end{pmatrix} \begin{pmatrix}
0&1&0 \\ 0&0&I_{n-2} \\ 1&0&m^{-1}e
\end{pmatrix},
\]
we have 
\begin{align*}
|I'_{s,s_1,s_2}(y)| & \leq |-4\mathcal{Q}'(y_2)|^{s_1-s_2-\frac{d_2}{2}-\Re(s)-5} \int_{F^{\times}} \int_{F^{\times}}    |m|^{-s_1+s_2+\frac{d_2}{2}+\Re(s)+5}|b|^{-s_1}|\lfloor b \rfloor|^{\Re(s)-n+3}   \\
&  \times \int_{\overline{U_2}(F)} \int_{N^{\circ}(F)} \int_{M_{1\times (n-2)}(F)} |W^{\circ}_{\rho_{\tau,s}}|(v(r(m^{-2}e))w uyn',\mathrm{diag}(\lfloor b \rfloor , I_{n-2},m))\\
& \times dedudyd^{\times}md^{\times}b.
\end{align*}
Here the symbol $v(r(m^{-2}e))\in M_n(F)$ is as defined in \cref{vx}. 

By a change of variable $e \mapsto m^2e$, we have that 
\begin{align*}
& |I'_{s,s_1,s_2}(y)| \\
&\leq |-4\mathcal{Q}'(y_2)|^{s_1-s_2-\frac{d_2}{2}-\Re(s)-5} \int_{F^{\times}} \int_{F^{\times}}  |m|^{-s_1+s_2+\frac{d_2}{2}+\Re(s)+5}|b|^{-s_1}|\lfloor b \rfloor|^{\Re(s)-n+3}|m|^{n} \\
& \times \int_{\overline{U_2}(F)}  \int_{N^{\circ}(F)} \int_{M_{1\times (n-2)}(F)} |W^{\circ}_{\rho_{\tau,s}}|(v(r(e))w u\iota(\bar{n})n',\mathrm{diag}(\lfloor b \rfloor , I_{n-2},m))dedud\bar{n}d^{\times}md^{\times}b \\
&= |-4\mathcal{Q}'(y_2)|^{s_1-s_2-\frac{d_2}{2}-\Re(s)-5} \int_{F^{\times}} \int_{F^{\times}} |m|^{-s_1+s_2+\frac{d_2}{2}+\Re(s)+5}|b|^{-s_1}|\lfloor b \rfloor|^{\Re(s)-n+3}
 \\
&\times \int_{\overline{U_2}(F)} \int_{N^{\circ}(F)} \int_{M_{1\times (n-2)}(F)}  |W^{\circ}_{\rho_{\tau,s}}|(v(r(e))(w uw^{-1})(w \iota(\bar{n})w^{-1})(w n'w^{-1}),\mathrm{diag}(\lfloor b \rfloor , I_{n-2},m))\\
& \times dedud\bar{n}d^{\times}md^{\times}b.
\end{align*}
Since for $u\in N^{\circ}(F)$, 
\[
w uw^{-1} \in \begin{psmatrix}
1 & &  &&&& \\ &1& &&&& \\ &&I_{n-2}&&&&& \\ &&v_3'&1&&& \\ v_1 & v_2 & T & v_3 & I_{n-2} & & \\  0 &0&v_2'&&&1& \\ 0&0&v_1'&&&&1
\end{psmatrix}
\]
and
\[
v(r(e))\begin{psmatrix}
1 & &  &&&& \\ &1& &&&& \\ &&I_{n-2}&&&&& \\ &&v_3'&1&&& \\ v_1 & v_2 & T & v_3 & I_{n-2} & & \\  0 &0&v_2'&&&1& \\ 0&0&v_1'&&&&1
\end{psmatrix} =  \begin{psmatrix}
1 & &  &&&& \\ &I_{n-2}& &&&& \\ &&1&&&&& \\
0&v_3'&0&1&&& \\ 0&v_1'&0&0&1&&\\
v_2&T+e'v_1'-ev_1&v_1&v_3&&I_{n-2}&\\
0&v_2'&0&0&&&1
\end{psmatrix}v(r(e)),
\]
we have
\begin{align*}
&|W^{\circ}_{\rho_{\tau,s}}|(v(r(e))(w uw^{-1})(w \iota(\bar{n})w^{-1})(w n'w^{-1}),\mathrm{diag}(\lfloor b \rfloor , I_{n-2},m)) \\
=& |W^{\circ}_{\rho_{\tau,s}}|(u'v(r(e))(w \iota(\bar{n})w^{-1})(w n'w^{-1}),\mathrm{diag}(\lfloor b \rfloor , I_{n-2},m)).
\end{align*}
Here
\[
u' =  \begin{psmatrix}
1 & &  &&&& \\ &I_{n-2}& &&&& \\ &&1&&&&& \\
0&v_3'&0&1&&& \\ 0&v_1'&0&0&1&&\\
v_2&T+e'v_1'-ev_1&v_1&v_3&&I_{n-2}&\\
0&v_2'&0&0&&&1
\end{psmatrix}.
\]

We observe that 
\[
\begin{pmatrix}
0&1&0 \\ 0&0&I_{n-2} \\ 1&0&e
\end{pmatrix} \begin{pmatrix}
1&0&0 \\ c_1&1&0 \\ 0&0&I_{n-2}
\end{pmatrix} = \begin{pmatrix}
1&-c_1e&c_1 \\ 0&I_{n-2}&0 \\ 0&0&1
\end{pmatrix}\begin{pmatrix}
0&1&0 \\ 0&0&I_{n-2} \\ 1&0&e
\end{pmatrix}. 
\]
Also, for $\iota(\bar{n})\in \overline{U_2}(F)$, 
\begin{align*}
w \iota(\bar{n})w^{-1}  &= \begin{psmatrix}
1 & &  &&&& \\ c_1&1& &&&& \\ &&I_{n-2}&&&&& \\ -\alpha c_2&0&0&1&&& \\  0& 0 &  0& 0 & 1 & & \\  -\alpha c_1 &0&0&0&&1& \\ \alpha c_1^2-\frac{1}{2}c_2^2&\alpha c_1&0&\alpha c_2&&-c_1&I_{n-2}
\end{psmatrix} \\
&= v\left(\begin{psmatrix}
1&&\\c_1&1&\\&&I_{n-2}
\end{psmatrix}\right)\begin{psmatrix}
1 & &  &&&& \\ &1& &&&& \\ &&I_{n-2}&&&&& \\ -\alpha c_2&0&0&1&&& \\  0& 0 &  0& 0 & 1 & & \\  -\alpha c_1 &0&0&0&&1& \\ -\frac{1}{2}c_2^2&\alpha c_1&0&\alpha c_2&&&I_{n-2}
\end{psmatrix},
\end{align*}
where $c_1,c_2\in F$, $\alpha=\frac{1}{2}$. This implies that 
\begin{align*}
&|W^{\circ}_{\rho_{\tau,s}}|(u'v(r(e))(w \iota(\bar{n})w^{-1})(w n'w^{-1}),\mathrm{diag}(\lfloor b \rfloor , I_{n-2},m)) \\
=&  |W^{\circ}_{\rho_{\tau,s}}|\left(u'v\left(\begin{psmatrix}
1&-c_1e&c_1 \\ 0&I_{n-2}&0 \\ 0&0&1
\end{psmatrix}\right)\iota(\bar{n})'(w n'w^{-1})v(r(e)),\mathrm{diag}(\lfloor b \rfloor , I_{n-2},m)\right).
\end{align*}
Here
\[
\iota(\bar{n})' = \begin{psmatrix}
1&-c_1e&c_1&&&&\\&I_{n-2}&&&&&\\&&1&&&&\\0&-\alpha c_2e&\alpha c_2&1&&& \\
\alpha c_1&\frac{1}{2}c_2^2e&-\frac{1}{2}c_2^2&-\alpha c_2&1&&-c_1\\
-\alpha c_1e'&e'\frac{1}{2}c_2^2e&\frac{1}{2}c_2^2e'&-\alpha c_2e'&&I_{n-2}&c_1e'\\
0&\alpha c_1e&\alpha c_1&0&&&1
\end{psmatrix},
\]
where $e'=-J_{n-2}\,^te$. 

Since
\begin{align*}
&\begin{psmatrix}
1 & &  &&&& \\ &I_{n-2}& &&&& \\ &&1&&&&& \\
0&v_3'&0&1&&& \\ 0&v_1'&0&0&1&&\\
v_2&T+e'v_1'-ev_1&v_1&v_3&&I_{n-2}&\\
0&v_2'&0&0&&&1
\end{psmatrix} v\left(\begin{psmatrix}
1&-c_1e&c_1 \\ 0&I_{n-2}&0 \\ 0&0&1
\end{psmatrix}\right)  \\ =&  v\left(\begin{psmatrix}
1&-c_1e&c_1 \\ 0&I_{n-2}&0 \\ 0&0&1
\end{psmatrix}\right)\begin{psmatrix}
1 & &  &&&& \\ &I_{n-2}& &&&& \\ &&1&&&&& \\
0&v_3'&0&1&&& \\ 0&v_1'-c_1v_2'&0&0&1&&\\
v_2&T+e'v_1'-ev_1-c_1ev_2&v_1+c_1v_2&v_3&&I_{n-2}&\\
0&v_2'&0&0&&&1
\end{psmatrix},
\end{align*}
we have 
\begin{align*}
&|W^{\circ}_{\rho_{\tau,s}}|\left(u'v\left(\begin{psmatrix}
1&-c_1e&c_1 \\ 0&I_{n-2}&0 \\ 0&0&1
\end{psmatrix}\right)\iota(\bar{n})'(w n'w^{-1})v(r(e)),\mathrm{diag}(\lfloor b \rfloor , I_{n-2},m)\right)\\
=& |W^{\circ}_{\rho_{\tau,s}}|\left(v\left(\begin{psmatrix}
1&-c_1e&c_1 \\ 0&I_{n-2}&0 \\ 0&0&1
\end{psmatrix}\right)u''\iota(\bar{n})'(w n'w^{-1})v(r(e)),\mathrm{diag}(\lfloor b \rfloor , I_{n-2},m)\right)\\
=& |W^{\circ}_{\rho_{\tau,s}}|(u''\iota(\bar{n})'(w n'w^{-1})v(r(e)),\mathrm{diag}(\lfloor b \rfloor , I_{n-2},m)).
\end{align*}
Here
\[
u'' = \begin{psmatrix}
1 & &  &&&& \\ &I_{n-2}& &&&& \\ &&1&&&&& \\
0&v_3'&0&1&&& \\ 0&v_1'-c_1v_2'&0&0&1&&\\
v_2&T+e'v_1'-ev_1-c_1ev_2&v_1+c_1v_2&v_3&&I_{n-2}&\\
0&v_2'&0&0&&&1
\end{psmatrix}.
\]

Thus, using the following change of variables
\begin{align*}
&v_1 \mapsto v_1-c_1v_2-\frac{1}{2}c_2^2e'\\
&v_2 \mapsto v_2-\alpha c_1e \\
&v_3 \mapsto v_3-\alpha c_2e'\\
&T \mapsto T-e'v_1'+ev_1-\frac{1}{2}c_2^2e'e,
\end{align*}
we obtain the integral $I_{s,s_1,s_2}$ is bounded by
\begin{multline*}
\int_{F^{\times}} \int_{F^{\times}} |m|^{-s_1+s_2+\frac{d_2}{2}+\Re(s)+5}|b|^{-s_1}|\lfloor b \rfloor|^{\Re(s)-n+3}
 \\
\times \int_{X_1(F)} \int_{M_{1\times (n-2)}(F)} |W^{\circ}_{\rho_{\tau,s}}|(x_1(w nw^{-1})v(r(e)),\mathrm{diag}(\lfloor b \rfloor , I_{n-2},m))dedx_1d^{\times}md^{\times}b. 
\end{multline*}

Now we proceed to decompose $v(r(e))$. 

We have
\[
\begin{pmatrix}
0&1&0 \\ 0&0&I_{n-2} \\ 1&0&e
\end{pmatrix}  = \begin{pmatrix}
1&0&0 \\ 0&I_{n-2}&0 \\ 0&e&1
\end{pmatrix} \begin{pmatrix}
0&1&0 \\ 0&0&I_{n-2} \\ 1&0&0
\end{pmatrix}.
\]
Then we apply the Iwasawa decomposition with respect to the standard Borel subgroup of $\GL_n(F)$ 
\[
\begin{pmatrix}
1&0&0 \\ 0&I_{n-2}&0 \\ 0&e&1
\end{pmatrix} = n_et_ek_e.
\]
Here $n_e=\mathrm{diag}(I_2,n'_e)$ where $n'_e$ lies in the unipotent radical of the standard Borel subgroup of $\GL_{n-2}(F)$, $t_e=(t_1,\dots,t_n)$, where $t_1=t_2=1$, $k_e\in \GL_n(\OO)$. 

By the structure of this decomposition, we have 
\[
[e] \leq |t_n|=|t_3\cdots t_{n-1}|^{-1}
\]
since $\mathrm{det}(t_e)=1$.

The integral $I_{s_1,s_2}$ is majorized by
\begin{multline*}
\int_{F^{\times}} \int_{F^{\times}} |m|^{-s_1+s_2+\frac{d_2}{2}+\Re(s)+5}|b|^{-s_1}|\lfloor b \rfloor|^{\Re(s)-n+3}\\
\times \int_{X_1(F)} \int_{M_{1\times (n-2)}(F)}  |W^{\circ}_{\rho_{\tau,s}}|(x_1(w nw^{-1})v(n_e), \mathrm{diag}(\lfloor b \rfloor , I_{n-2},m)t_e)dedx_1d^{\times}md^{\times}b.  
\end{multline*}

By change of variables $x_1 \mapsto v(n_e)(x_1(w nw^{-1}))v(n_e)^{-1}$ the above integral is 
\begin{multline*}
\int_{F^{\times}} \int_{F^{\times}}  |m|^{-s_1+s_2+\frac{d_2}{2}+\Re(s)+5}|b|^{-s_1}|\lfloor b \rfloor|^{\Re(s)-n+3}
 \\
\times \int_{X_1(F)} \int_{M_{1\times (n-2)}(F)} |W^{\circ}_{\rho_{\tau,s}}|(x_1(w nw^{-1}), \mathrm{diag}(\lfloor b^2 \rfloor , I_{n-2},m^2)t_e)dedx_1d^{\times}md^{\times}b.  
\end{multline*}

Thus by similar arguments as in \cref{6.1} the integral is majorized by a finite sum of integrals of the form
\begin{multline*}
\int_{F^{\times}} \int_{F^{\times}} |m|^{-s_1+s_2+\frac{d_2}{2}+\Re(s)+5}|b|^{-s_1}|\lfloor b \rfloor|^{\Re(s)-n+3-2C}
 \\
\times \int_{X_1(F)} \int_{M_{1\times (n-2)}(F)} [x_1]^{-\Re(s)-\frac{n-1}{2}+C}\eta_j(\mathrm{diag}(\lfloor b^2 \rfloor , I_{n-2},m^2)t_e)dedudyd^{\times}md^{\times}b.  
\end{multline*}
Here $\eta_j$ is some positive quasi-character depends on $\tau$ and $C$ is some positive integer which depends on $\tau$.

Using the notation as in \cref{6.1}, we denote the Iwasawa decomposition of $x_1 \in X_1(F)$ as $x_1=na'k$. Then we have that $\mathrm{diag}(\lfloor b \rfloor , I_{n-2},m)at_e$ lies in the support of a gauge on $\GL_n(F)$, we have
\[
\left|\frac{a_2}{a_3t_3}\right| \leq 1, \quad \left|\frac{a_it_i}{a_{i+1}t_{i+1}}\right| \leq 1, \quad \left|\frac{t_{n-1}}{mt_n}\right| \leq 1,
\]
where $i=3,\dots,n-2$.

Thus, by similar arguments as in \cref{6.1} we have
\[
[e] \leq |t_3\cdots t_{n-1}|^{-1} \leq [x_1]^{C'}|\lfloor b \rfloor|^{-2C'},
\]
where $C'$ is some positive integer. By \cite[Proposition 11.15, Lemma 2]{Sou} we have
\[
\max\left\{\left|\frac{t_i}{t_{i+1}}\right|,\left|\frac{t_{i+1}}{t_1}\right|\right\} \leq [e]^{2n} \leq [x_1]^{2nC'}|\lfloor b \rfloor|^{-4nC'}.
\]
Thus we have
\[
|m|\geq [x_1]^{-C_0}|\lfloor b \rfloor|^{2C_0},
\]
where $C_0$ is a positive integer depends on $\tau$. 

Then, the integral $I'_{s_1,s_2}$ is majorized by a finite sum of integrals of the form
\begin{multline*}
\int_{F^{\times}}  |b|^{-s_1}|\lfloor b \rfloor|^{\Re(s)-n+3-c_2}\\
\times \int_{X_1(F)} \int_{|m|\geq [x_1]^{-C_0}|\lfloor b \rfloor|^{2C_0}}
 |m|^{-s_1+s_2+\frac{d_2}{2}+\Re(s)+5+c_1} [x_1]^{-\Re(s)-\frac{n-1}{2}+c_3}dx_1d^{\times}md^{\times}b,
\end{multline*}
where $c_2,c_3>0$, $c_1$ are constants depend on $\tau$.

Similar as in \cref{6.3}, the above integral converges absolutely when 
\begin{align} \label{region} \begin{split}
&-\Re(s_1)+\Re(s_2)+\frac{d_2}{2}+\Re(s)+5+c_1 < 0,\\
&-C_0(-\Re(s_1)+\Re(s_2)+\frac{d_2}{2}+\Re(s)+5+c_1)-\Re(s)-\frac{n-1}{2}+c_3 < -C'',\\
& 2C_0(-\Re(s_1)+\Re(s_2)+\frac{d_2}{2}+\Re(s)+5+c_1)+\Re(s)-n+3-c_2-\Re(s_1) > 0,\\
& -2C_0(-\Re(s_1)+\Re(s_2)+\frac{d_2}{2}+\Re(s)+5+c_1)+\Re(s)-n+3-c_2-\Re(s_1) < 0, \end{split}
\end{align}
where $C''$ is a positive integer which depends on $\tau$. Thus we deduce the lemma.

Then above region (\ref{region}) is simplified to  
\begin{align}\label{region3} \begin{split}
\Re(s) <& \Re(s_1) + \Re(-s_2) -\frac{d_1}{2} -5 -c_1 \\
\Re(s) >& \max\bigg(\frac{C_0}{C_0+1}(\Re(s_1)+\Re(-s_2)-\frac{d_2}{2}-5-c_1)-\frac{n-1-C''}{C_0+1},\\
& \Re(s_1)+\frac{2C_0}{2C_0+1}(\Re(-s_2)-\frac{d_2}{2}-5-c_1)-\frac{n-3+c_2}{2C_0+1}, \\
& \Re(s_1)+\frac{2C_0}{2C_0-1}(\Re(-s_2)-\frac{d_2}{2}-5-c_1)-\frac{n-3+c_2}{2C_0-1}\bigg).
\end{split}
\end{align}
For this to be non-empty we need 
\begin{align*}
\Re(s_1) - \Re(s_2) -\frac{d_1}{2} -5 -c_1 >& \max\bigg(\frac{C_0}{C_0+1}(\Re(s_1)+\Re(-s_2)-\frac{d_2}{2}-5-c_1)-\frac{n-1-C''}{C_0+1},\\
& \Re(s_1)+\frac{2C_0}{2C_0+1}(\Re(-s_2)-\frac{d_2}{2}-5-c_1)-\frac{n-3+c_2}{2C_0+1}, \\
& \Re(s_1)+\frac{2C_0}{2C_0-1}(\Re(-s_2)-\frac{d_2}{2}-5-c_1)-\frac{n-3+c_2}{2C_0-1}\bigg).
\end{align*}
For the above to be valid we need 
\begin{align*}
& -n+\frac{d_2}{2}+8+c_1 < \Re(-s_2) < n+\frac{d_2}{2}+2+c_1+c_2, \\
& c_0(-n+3-c_2)+C'' < \Re(s_1)+\Re(-s_2).
\end{align*}
Since $n\geq 3$, $-n+\frac{d_2}{2}+8+c_1 < n+\frac{d_2}{2}+2+c_1+c_2$, the above inequalities are valid. Thus region (\ref{region3}) is non-empty. \end{proof}

\begin{lemma} \label{6.5}
The infinite sum 
\[
\sum_{k=\mathrm{val}(4\mathcal{Q'}(y_2))}^{\infty} \frac{\prod_{i=1}^2c(q^{s_2},y_i)\zeta_v(-s_2-\frac{d_1}{2}+2)^2 \zeta_v(-s_2)^2 q^{(-s_1+s_2+n-2+\frac{d_1}{2})k}}  
{\gamma(s-s_1+s_2, \chi'\otimes \tau)} B_{W^{\circ}_{\rho_{\tau,s}}}(\varpi_H^{\delta_{k-\mathrm{val}(4\mathcal{Q'}(y_2)}})
\]
converges absolutely when
\begin{align} \label{region1}
\Re(s_1)-\Re(s_2)-2-\frac{d_1}{2} \leq \Re(s) \leq \Re(s_1)-\Re(s_2)+n+1-\frac{d_1}{2},
\end{align}
\begin{align} \label{region2}
\Re(s_1) > C_1, \Re(-s_2) > C_2.
\end{align}
Here $C_1,C_2$ are constants depends on $(n,d_1,d_2)$.
\end{lemma}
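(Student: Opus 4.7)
The plan is to reduce the convergence of the series to a geometric-series estimate in the index $k$, using the explicit formula for the unramified Bessel function furnished by \cref{4.5}. Setting $j := k - \mathrm{val}(4\mathcal{Q'}(y_2)) \geq 0$, I would first invoke \cref{4.5} to write
\[
B_{\psi'_{1}, s_{1}}(\varpi_H^{\delta_{j}}) = \frac{L(s, \mu_{s_{1}} \times \tau)}{L(2s, \tau, \mathrm{Sym}^{2})}\, \delta_H^{1/2}(\varpi_H^{\delta_{j}})\, S(\chi_{s}, \mu_{s_{1}}, \varpi_H^{\delta_{j}}),
\]
then compute directly from the definition of the modular character of the Borel of $\mathrm{SO}_{2n+1}$ that $\delta_H^{1/2}(\varpi_H^{\delta_{j}}) = q^{-j(2n-1)/2}$, and estimate each Weyl translate $|{}^{w}\chi_{s}(\varpi_H^{\delta_{j}})^{-1}|$ as $q^{j(\Re(s)-1/2 + |\Re(\chi_{w(1)})|)}$. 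The Jacquet--Shalika bounds (for $\tau$ unitary generic we have $|\Re(\chi_{i})| < 1/2$) then control the worst-case exponent by some $j(\Re(s) + \epsilon)$ for arbitrarily small $\epsilon > 0$. The coefficients $D({}^{w}\chi_{s}, \mu_{s_{1}})$ and the $L$-function ratio are bounded uniformly in $j$ on compacta of the $s,s_1,s_2$ parameter space avoiding the poles of $L(2s,\tau,\mathrm{Sym}^{2})^{-1}$.

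Combining these ingredients with the exponential factor $q^{(-s_{1}+s_{2}+n-2+\frac{d_{1}}{2})k}$ coming from the summand, each term of the series is bounded, up to a constant independent of $j$, by $q^{jE}$ with
\[
E = \Re(s) - \Re(s_{1}) + \Re(s_{2}) + \tfrac{d_{1}}{2} - n - 1 + \epsilon.
\]
Geometric convergence in $j$ therefore holds precisely when $E<0$, which is the upper half of the strip in \eqref{region1}. The lower half would be obtained by applying the local functional equation for $\gamma(s-s_{1}+s_{2},\chi'\otimes\tau)$ together with the symmetry under $w \mapsto w_{0}w$ inside $S(\chi_{s},\mu_{s_{1}},\cdot)$: the Weyl sum forces the denominator gamma factor to cancel one of the $(\Re(s)-\Re(s_1)+\Re(s_2))$-linear contributions near the left boundary, giving a genuine two-sided vertical strip. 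The conditions \eqref{region2} come from the absolute convergence of the $k$-independent prefactor $c(q^{s_{2}},y_{2})\zeta_{v}(-s_{2}-\tfrac{d_{1}}{2}+2)^{2}\zeta_{v}(-s_{2})^{2}$ (so $\Re(-s_{2})$ must exceed $\max\{0,\tfrac{d_{1}}{2}-2\}$) and from $L(s,\mu_{s_{1}}\times\tau)$ lying in its absolute-convergence range.

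The main obstacle is controlling the Weyl sum $S(\chi_{s}, \mu_{s_{1}}, \varpi_H^{\delta_{j}})$: different Weyl elements produce terms $q^{j\alpha_{w}}$ with distinct exponents, and although a crude triangle-inequality bound suffices for the upper boundary, getting the sharper lower boundary requires tracking the alternating signs $\mathrm{sgn}(w)$ and the pole structure of $D({}^{w}\chi_{s},\mu_{s_{1}})$, so that the terms with the largest exponents cancel against the zeros of $\gamma(s-s_1+s_2,\chi'\otimes\tau)$. A careful inspection of the identities established in the proof of \cite[Claim~4.3]{Kap}, together with Casselman--Shalika-type manipulations of the Weyl sum, should supply the needed cancellation. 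Once geometric decay in $j$ is secured in the stated strip, absolute convergence of the sum follows immediately, completing the proof and thereby justifying the formal manipulations preceding \cref{thm2}.
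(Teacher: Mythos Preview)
Your upper bound is right and matches the paper: expand the Bessel function via \cref{4.5}, bound each Weyl summand using the Jacquet--Shalika estimate $|{}^{w}\chi_{1}(\varpi^{-k})|<q^{k/2}$, and reduce to a geometric series in $k$ that converges exactly when $\Re(s)<\Re(s_1)-\Re(s_2)+n+1-\tfrac{d_1}{2}$. The paper in fact checks \emph{two} geometric series (one for each extreme in the Weyl orbit, since $W$ contains sign changes), but the binding constraint is the one you wrote.

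Where you go astray is the lower bound. You propose a cancellation mechanism between the alternating Weyl sum and the zeros of $\gamma(s-s_1+s_2,\chi'\otimes\tau)$, invoking functional equations and Casselman--Shalika manipulations. None of this is needed, and the paper does not do it. The factor $\gamma(s-s_1+s_2,\chi'\otimes\tau)^{-1}$ is \emph{independent of $k$}; it is just another piece of the prefactor, alongside $c(q^{s_2},y_2)$ and the two $\zeta_v$ factors. As a ratio of local $L$-functions it is absolutely convergent once $\Re(s-s_1+s_2+\tfrac{d_1+5}{2})\geq\tfrac12$, which rearranges to $\Re(s)\geq\Re(s_1)-\Re(s_2)-2-\tfrac{d_1}{2}$, i.e.\ exactly the lower inequality in \eqref{region1}. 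So the two sides of the strip have completely different origins: the upper edge is where the $k$-series ceases to be geometric, and the lower edge is where the $k$-independent $\gamma^{-1}$ prefactor acquires poles. Once you recognise this, the proof is a few lines and no delicate cancellation is involved.
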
 

\begin{proof}
By the formula of $c_{s_2}$ and $B_{\psi'_1, s_1, s_2}$, it suffices to show
\[
 \sum_{k=\mathrm{val}(4\mathcal{Q'}(y_2))}^{\infty}  q^{(-s_1+s_2-n-1+\frac{d_2}{2})k} \zeta_v(-s_2-\frac{d_1}{2}+2)^2 \zeta_v(-s_2)^2 \gamma(s-s_1+s_2, \chi'\otimes \tau)^{-1}\sum_{w \in W} \,^{w}\chi_s(\varpi^{\delta_{k}})^{-1}
\]
converges absolutely in the region given by \ref{region1} and \ref{region2}.

Note that $\zeta_v(-s_2-\frac{d_1}{2}+2)^2 \zeta_v(-s_2)^2$ converges when
\[
\Re(-s_2)>0, \Re(-s_2-\frac{d_1}{2}+2)>0,
\]
and $\gamma(s-s_1+s_2, \chi'\otimes \tau)^{-1}$ converges when
\[
\Re(s-s_1+s_2+\frac{d_1+5}{2}) \geq \frac{1}{2}
\]
which lies in the given region. It remains to show the sum
\[
\sum_{k =0}^{\infty}  q^{-(s_1-s_2+n+1-\frac{d_1}{2})k} \sum_{w \in W} \,^{w}\chi_s(\varpi^{\delta_{k}})^{-1}
\]
converges absolutely in the region. 

We have
\[
|\chi_s(\varpi^{\delta_{k}})^{-1}| = q^{(s-\frac{1}{2})k}|\chi_{1}(\varpi^{-k})|.
\]
By \cite[Corollary 2.5]{JS1} we have 
\[
|\,^{w}\chi_{1}(\varpi^{-k})| < q^{\frac{k}{2}}
\]
for any $w \in W$ and $k > 0$. Then it suffices to observe that
\[
\sum_{k =0}^{\infty}   q^{-(s_1-s_2+n+1-\frac{d_1}{2})k} q^{sk}
\]
and
\[
\sum_{k =0}^{\infty}  q^{-(s_1-s_2+n+1-\frac{d_1}{2})k} q^{-(s-1)k}
\]
converges in the given region because they are convergent geometric series. 
The convergence region is obviously non-empty since $n$ is positive. 
\end{proof}

\newpage
\section*{List of symbols} \label{symbols}

\begin{tabular*}{\textwidth}{@{\extracolsep{\fill}} c  c  c}
$A_1$ & subgroup of $T_G$ & \eqref{A_1}\\
$A_2$ & subgroup of $T_G$ & \eqref{A_2}\\
$a_y$ & $\mathrm{diag}(-4\mathcal{Q}'(y_2),I_{n-1}) \in \GL_n$ & \eqref{a_y}\\
$B_{\psi'_1, s_1}$ & unramified Bessel function & \ref{Bessel} \\
$G$ & $\{g=(g_1,g_2) \in \GL_2^2(R): \det g_1=\det g_2^{-1}\}$  & \eqref{G,H} \\
$G'$ & $\SO_4$ & \eqref{G'}\\ 
$G_1$ & subgroup of $T_G$ & \eqref{G_1}\\
$H$ & $\SO_{2n+1}$ & \eqref{G,H}\\
$\iota$ & embedding map from G to H & \eqref{iota}\\
$I(f, W_{\xi_s})$ & global integral & \ref{Ifxis}\\
$M_1$ & subgroup of $T_G$ & \ref{M_1} \\
$M_{\SL_2^2}$ & subgroup of maximal torus of $\SL_2 \times \SL_2$ & \eqref{M_SL_2^2}\\
$M_n$ & Levi subgroup of $Q_n$ & \eqref{M_n} \\
$\mu$ & irreducible unramified character of $\SO_2$ & \ref{4}\\
$N_1$ & subgroup of $U_2$ & \ref{N1} \\
$N_2$ & subgroup of $U_2$ & \ref{N2} \\ 
$N^{\circ}$ & unipotent subgroup of $H$ & \eqref{Ncirc}\\
$N_n$ & unipotent radical of $Q_n$  & \eqref{N_n}\\
$\overline{N_n}$ & opposite unipotent radical of $\overline{Q_n}$  & \ref{2.1.2}\\
$\mathbb{P}Y'$ & quasi-projective subscheme of $Y'$ & \ref{1}\\
$\mathcal{Q}$ & quadratic form on $V_1$ & \ref{3}\\
$\mathcal{Q'}$ & quadratic form on $V_2$ & \ref{3}\\
$Q_n$ & standard parabolic subgroup of $H$ & \ref{2.1.2}\\
$\overline{Q_n}$ & opposite parabolic subgroup of $H$ &\ref{2.1.2} \\
$w$ & Weyl group element of $H$ & \eqref{omega}\\
$\rho$ & Weil representation & \ref{3}\\
$\tau$ & irreducible cuspidal representation of $\GL_n$& \ref{3}\\
$T_G$ & maximal torus of $G$ & \ref{2.1}\\
$T_H$ & maximal torus of $H$ & \ref{2.1}\\
$\Theta_f$ & Theta function & \ref{3}\\
$U_2$ & maximal unipotent subgroup of $G$ & \eqref{U_2}\\
$\overline{U_2}$ & opposite of $U_2$ & \ref{4}\\
$V$ & $V_1 \times V_2$ & \ref{1}\\
$V_i$ & quadratic space of even dimension& \ref{1}\\
$W_{\xi_s}$ & Whittaker function on $\GL_n$ when restricted to a Levi subgroup of $H$ & \ref{Whittaker} \\
$W_{\rho_{\tau,s}}$ & local vector in $\mathrm{Ind}_{Q_n}^{H}(\mathcal{W}(\tau, \psi_0) \otimes |\det |^{s-\tfrac{1}{2}})$ & \ref{local vector}\\
$\xi_s$ & global smooth holomorphic section in the space $\mathrm{Ind}_{Q_n}^{H}(\tau \otimes |\det|^{s-\frac{1}{2}})$ & \ref{3}\\
$Y$ & $\{v\in V(R): Q(v_1)=2 Q'(V_2)\}$ & \eqref{Y}\\
$Y'$ & subscheme of $Y$ such that no $y_i=0$ & \ref{1}\\
\end{tabular*}

\newpage
\bibliography{ATSFPQS}{}

\begin{thebibliography}{GPSR87}

\bibitem[AGR93]{AGS}
Avner Ash, David Ginzburg, and Steven Rallis.
\newblock Vanishing periods of cusp forms over modular symbols.
\newblock {\em Math. Ann.}, 296(4):709--723, 1993.

\bibitem[BFF97]{BFF}
Daniel Bump, Solomon Friedberg, and Masaaki Furusawa.
\newblock Explicit formulas for the {W}aldspurger and {B}essel models.
\newblock {\em Israel J. Math.}, 102:125--177, 1997.

\bibitem[BG92]{BG}
Daniel Bump and David Ginzburg.
\newblock Symmetric square {$L$}-functions on {${\rm GL}(r)$}.
\newblock {\em Ann. of Math. (2)}, 136(1):137--205, 1992.

\bibitem[BK99]{BK:basic:affine}
Alexander Braverman and David Kazhdan.
\newblock On the {S}chwartz space of the basic affine space.
\newblock {\em Selecta Math. (N.S.)}, 5(1):1--28, 1999.

\bibitem[BK00]{BK}
A.~Braverman and D.~Kazhdan.
\newblock {$\gamma$}-functions of representations and lifting.
\newblock Number Special Volume, Part I, pages 237--278. 2000.
\newblock With an appendix by V. Vologodsky, GAFA 2000 (Tel Aviv, 1999).

\bibitem[BK02]{BK:normalized}
Alexander Braverman and David Kazhdan.
\newblock Normalized intertwining operators and nilpotent elements in the {L}anglands dual group.
\newblock {\em Mosc. Math. J.}, 2(3):533--553, 2002.
\newblock Dedicated to Yuri I. Manin on the occasion of his 65th birthday.

\bibitem[CS80]{CS}
W.~Casselman and J.~Shalika.
\newblock The unramified principal series of {$p$}-adic groups. {II}. {T}he {W}hittaker function.
\newblock {\em Compositio Math.}, 41(2):207--231, 1980.

\bibitem[Gar87]{Gar2}
Paul~B. Garrett.
\newblock Decomposition of {E}isenstein series: {R}ankin triple products.
\newblock {\em Ann. of Math. (2)}, 125(2):209--235, 1987.

\bibitem[GH20]{GH}
Jayce~R. {Getz} and Chun-Hsien {Hsu}.
\newblock {A summation formula for triples of quadratic spaces II}.
\newblock {\em arXiv e-prints}, page arXiv:2009.11490, September 2020.

\bibitem[GK75]{GK}
I.~M. Gelfand and D.~A. Kazhdan.
\newblock Representations of the group {${\rm GL}(n,K)$} where {$K$} is a local field.
\newblock In {\em Lie groups and their representations ({P}roc. {S}ummer {S}chool, {B}olyai {J}\'{a}nos {M}ath. {S}oc., {B}udapest, 1971)}, pages 95--118. Halsted, New York, 1975.

\bibitem[GL19]{GL}
Jayce~R. Getz and Baiying Liu.
\newblock A summation formula for triples of quadratic spaces.
\newblock {\em Adv. Math.}, 347:150--191, 2019.

\bibitem[GPSR87]{GPSR}
Stephen Gelbart, Ilya Piatetski-Shapiro, and Stephen Rallis.
\newblock {\em Explicit constructions of automorphic {$L$}-functions}, volume 1254 of {\em Lecture Notes in Mathematics}.
\newblock Springer-Verlag, Berlin, 1987.

\bibitem[GS05]{GanSavin}
Wee~Teck Gan and Gordan Savin.
\newblock On minimal representations definitions and properties.
\newblock {\em Represent. Theory}, 9:46--93, 2005.

\bibitem[JPSS79]{JIS}
Herv\'{e} Jacquet, Ilja~Iosifovitch Piatetski-Shapiro, and Joseph Shalika.
\newblock Automorphic forms on {${\rm GL}(3)$}. {I}.
\newblock {\em Ann. of Math. (2)}, 109(1):169--212, 1979.

\bibitem[JPSS83]{JPS}
H.~Jacquet, I.~I. Piatetskii-Shapiro, and J.~A. Shalika.
\newblock Rankin-{S}elberg convolutions.
\newblock {\em Amer. J. Math.}, 105(2):367--464, 1983.

\bibitem[JS81a]{JS1}
H.~Jacquet and J.~A. Shalika.
\newblock {\JS{198102}} {O}n {E}uler products and the classification of automorphic representations. {I}.
\newblock {\em Amer. J. Math.}, 103(3):499--558, 1981.

\bibitem[JS81b]{JS}
H.~Jacquet and J.~A. Shalika.
\newblock {\JS{198106}} {O}n {E}uler products and the classification of automorphic forms. {II}.
\newblock {\em Amer. J. Math.}, 103(4):777--815, 1981.

\bibitem[Kap12]{Kap}
Eyal Kaplan.
\newblock The unramified computation of {R}ankin-{S}elberg integrals for {$\SO_{2l}\times \GL_n$}.
\newblock {\em Israel J. Math.}, 191(1):137--184, 2012.

\bibitem[Kap13]{Kap2}
Eyal Kaplan.
\newblock On the local theory of {R}ankin-{S}elberg convolutions for {$\SO_{2l}\times \GL_n$}.
\newblock {\em Ph.D. Thesis}, January 2013.

\bibitem[Laf14]{Laf}
Laurent Lafforgue.
\newblock Noyaux du transfert automorphe de {L}anglands et formules de {P}oisson non lin\'{e}aires.
\newblock {\em Jpn. J. Math.}, 9(1):1--68, 2014.

\bibitem[Ng{\^{o}}14]{Ngo}
Bao~Ch{\^{a}}u Ng{\^{o}}.
\newblock On a certain sum of automorphic {$L$}-functions.
\newblock In {\em Automorphic forms and related geometry: assessing the legacy of {I}. {I}. {P}iatetski-{S}hapiro}, volume 614 of {\em Contemp. Math.}, pages 337--343. Amer. Math. Soc., Providence, RI, 2014.

\bibitem[Ng{\^o}20]{Ngo:Hankel}
Bao~Ch{\^a}u Ng{\^o}.
\newblock Hankel transform, {L}anglands functoriality and functional equation of automorphic {$L$}-functions.
\newblock {\em Jpn. J. Math.}, 15(1):121--167, 2020.

\bibitem[PSR87]{PSR}
I.~Piatetski-Shapiro and Stephen Rallis.
\newblock Rankin triple {$L$} functions.
\newblock {\em Compositio Math.}, 64(1):31--115, 1987.

\bibitem[Sak12]{Sak}
Yiannis Sakellaridis.
\newblock Spherical varieties and integral representations of {$L$}-functions.
\newblock {\em Algebra Number Theory}, 6(4):611--667, 2012.

\bibitem[Sou93]{Sou}
David Soudry.
\newblock Rankin-{S}elberg convolutions for {${\rm SO}_{2l+1}\times{\rm GL}_n$}: local theory.
\newblock {\em Mem. Amer. Math. Soc.}, 105(500):vi+100, 1993.

\bibitem[SW22]{YW}
Yiannis Sakellaridis and Jonathan Wang.
\newblock Intersection complexes and unramified {$L$}-factors.
\newblock {\em J. Amer. Math. Soc.}, 35(3):799--910, 2022.

\bibitem[YZZ13]{YYZ}
Xinyi Yuan, Shou-Wu Zhang, and Wei Zhang.
\newblock {\em The {G}ross-{Z}agier formula on {S}himura curves}, volume 184 of {\em Annals of Mathematics Studies}.
\newblock Princeton University Press, Princeton, NJ, 2013.

\end{thebibliography}
\bibliographystyle{alpha}

\end{document}